\documentclass[10pt]{article}

\usepackage[latin2]{inputenc}
\usepackage{amsmath}
\usepackage{bbm}
\usepackage{graphicx}
\usepackage{amssymb}
\usepackage{esint}
\usepackage{tikz}
\usepackage[dvipsnames]{xcolor}
\usepackage{floatrow}
\usepackage{color}
\usepackage{amsthm}
\usepackage{stackengine}
\usepackage{scalerel}
\usepackage{epsfig}
\usepackage[english]{babel}
\usepackage{mathtools}
\newtheorem{theorem}{Theorem}
\newtheorem{proposition}[theorem]{Proposition}
\newtheorem{lemma}[theorem]{Lemma}
\newtheorem{corollary}[theorem]{Corollary}

\newtheorem{remark}[theorem]{Remark}

\usepackage[colorlinks=true,linkcolor=RoyalBlue,anchorcolor=black,citecolor=Green,filecolor=black,menucolor=black,runcolor=black,urlcolor=black]{hyperref}
 \usepackage[margin=1in]{geometry}
 \usepackage[page,toc,titletoc,title]{appendix}
 \usepackage{tocloft}
 \usepackage{accents}

    \providecommand{\PfStep}[2]{\noindent \ifnum\value{#1}=1\else\fi{\sc Step }\arabic{#1}\label{#2}\refstepcounter{#1}.} 

\newtheorem*{theorem*}{Theorem}

\def\Xint#1{\mathchoice
{\XXint\displaystyle\textstyle{#1}}%
{\XXint\textstyle\scriptstyle{#1}}%
{\XXint\scriptstyle\scriptscriptstyle{#1}}%
{\XXint\scriptscriptstyle\scriptscriptstyle{#1}}%
\!\int}
\def\XXint#1#2#3{{\setbox0=\hbox{$#1{#2#3}{\int}$ }
\vcenter{\hbox{$#2#3$ }}\kern-.6\wd0}}

\def\fint{\Xint-}

\allowdisplaybreaks[2]

\newcommand{\twoscale}{\textrm{2-scale}}
 
\newcommand{\supp}{\operatorname{supp}}

\newcommand{\diam}{\operatorname{diam}}
\newcommand{\eps}{\varepsilon}

\newcommand{\dist}{\operatorname{dist}}

\newcommand{\loc}{\rm{loc}}

\newcommand{\dd}{{\textrm{d}}}
\newcommand{\wrt}{w.\,r.\,t.\ }
\newcommand{\as}{a.\,s.\ }
\newcommand{\eg}{e.\,g.}
\newcommand{\ie}{i.\,e.\ }
\newcommand{\lhs}{left-hand side\ }
\newcommand{\wolog}{w.\,l.\,o.\,g\ }
\newcommand{\rhs}{right-hand side\ }

\newcommand{\domain}{{\mathcal{O}}}

\renewcommand{\vec}[1]{{\mathbf{#1}}}

\newcommand{\en}[1]{\left< #1 \right>} 
\providecommand{\dx}{\, \mathrm{d} x}
\providecommand{\dy}{\, \mathrm{d} y}
\providecommand{\dS}{\, \mathrm{d} S}
\providecommand{\ud}{\, \mathrm{d}}
\newcommand*{\Hd}{\mathbb{H}^d_+}
\newcommand{\Rd}{\mathbb{R}}
\newcommand{\C}{\mathcal{C}}

\usepackage{authblk} 	
\usepackage{textcomp}

\numberwithin{equation}{section} 

\newcommand{\transpose}{*}

\title{Regularity theorems for random elliptic operators on domains}

\author{
Peter~Bella$^a$,
Julian~Fischer$^b$,
Marc~Josien$^c$,
Claudia~Raithel$^d$
}

\date{}

\begin{document}

\maketitle

\begin{abstract}
Regularity theorems \`a la Avellaneda-Lin are an indispensable part of the modern quantitative theory of stochastic homogenization. While interior regularity results for random elliptic operators have been available for a while, on general smooth domains the existing theory has until recently remained limited to Lipschitz estimates. We establish $C^{1,\alpha}$ regularity results for random elliptic operators on bounded sufficiently smooth domains, as well as for scalar problems on convex polytopes. We, furthermore, prove a number of auxiliary results typically employed in the derivation of fluctuation bounds, such as a weighted Meyers estimate.
\end{abstract}

\tableofcontents

\thanks{
~\\
$^a$TU Dortmund, DE-44227, Dortmund, Germany
\\
$^b$IST Austria, AT-3400, Klosterneuburg, Austria
\\
$^c$CEA, DES, IRESNE, DEC, SESC, LMCP, Cadarache, F-13108, Saint-Paul-Lez-Durance, France
\\
$^d$TU Wien, AT-1040, Wien, Austria}

\section{Introduction}
In this contribution we derive regularity results for weak solutions $u\in H^1_{loc}(\domain)$ of elliptic PDEs and systems of the form
\begin{equation}
\label{equation_on_domain}
\begin{aligned}
-\nabla \cdot (a(\tfrac{\cdot}{\eps}) \nabla u) &= 0 && \text{in }\quad \, \domain, 
\\
u &= 0 && \text{on } \quad \partial \domain,
\end{aligned}
\end{equation}
where the coefficient-field $a: \mathbb{R}^d \rightarrow \Rd^{d \times d}$ is assumed to be bounded and uniformly elliptic --properties which are classically observed to be insufficient for guaranteeing the H\"older regularity of $u$, see for instance the counter-examples by Meyers \cite[Example 3]{PS} and De Giorgi \cite[Section 9.1.1]{GM_book}. Within the context of homogenization, in which the operator $-\nabla \cdot a(\tfrac{\cdot}{\eps}) \nabla$ is approximated by a constant-coefficient operator $-\nabla \cdot \bar a \nabla$ as $\eps \rightarrow 0$, it is, however, quite standard to aim at ``transferring'' regularity properties from the constant-coefficient homogenized operator back to  $-\nabla \cdot  a(\tfrac{\cdot}{\eps}) \nabla$ for small enough $\eps>0$. This type of strategy was introduced by Avellaneda and Lin in the setting of periodic homogenization in the 80s \cite{AvellanedaLinCPAM,AvellanedaLinCRAS89}, and more recently has been fruitfully exploited in stochastic homogenization \cite{armstrongsmart2014,ArmstrongMourrat,ArmstrongKuusiMourratNew,FischerOtto,GNO_final,
GNO5,GloriaOtto2015}. 

In the past decade, an extensive theory of quantitative stochastic homogenization has been developed. As large-scale regularity statements are a central ingredient in the derivation of quantitative homogenization estimates \cite{armstrongsmart2014,ArmstrongKuusiMourratNew,GNO_final,GloriaOtto2015}, at this point the topic of large-scale regularity theories for random elliptic operators on the whole-space has been quite thoroughly explored. We refer to, \eg, \cite{ArmstrongKuusiMourratNew, ArmstrongMourrat, armstrongsmart2014, GloriaOtto2015,GNO_final,MarahrensOtto} concerning H\"older, Lipschitz, and $C^{1,\alpha}$ regularity, to \cite{ArmstrongKuusiMourrat_higherorder,FischerOtto} for higher-order (that is, $C^{k,\alpha}$) regularity, as well as to
\cite{ArmstrongSmartNondivergence,AFK,BellaFehrmanOtto,ArmstrongDario,BCF,ABM} for nonlinear, degenerate, or parabolic variants.

On the other hand, much less is known concerning the topic of boundary regularity. In \cite{ArmstrongMourrat} large-scale Lipschitz estimates were shown for random elliptic operators with Dirichlet boundary data on sufficiently regular domains. For the particular case of a half-space $\domain=\Hd$, in \cite{FischerRaithel} a $C^{1,\alpha}$ regularity theory up to the boundary was developed. In \cite{Raithel} an analogous result was proven in the case of Neumann boundary data. However, for general smooth domains the derivation of a $C^{1,\alpha}$ regularity theory for random elliptic operators up to the boundary has remained an open problem.

The main result of the present work --stated in Theorem~\ref{large_scale_reg_domain}-- is a large-scale $C^{1,\alpha}$ regularity theory for random elliptic operators on a general smooth domain in the case of Dirichlet boundary conditions. To facilitate the analysis of numerical homogenization schemes, we also prove a corresponding result for scalar PDEs on convex polytopes, see Theorem~\ref{PropositionExcessDecayConvex}. In Proposition~\ref{sens_Lm0} we state a weighted Meyers estimate, which is employed in the companion paper \cite{BFJR_2} to derive fluctuation estimates on boundary layers. All of these results rely on a (suboptimal) quantitative estimate on the boundary layer corrector, stated in Proposition~\ref{intermediate_lemma}.

\subsection{Boundary layers in periodic and stochastic homogenization}

Two concepts central to quantitative homogenization theory are that of the \emph{homogenization corrector} and the \emph{two-scale expansion}. For the linear elliptic PDE
\begin{align}
\label{LinearEllipticRd}
-\nabla \cdot (a(\tfrac{\cdot}{\eps})\nabla u^\eps)=\nabla \cdot f \quad\quad\text{on }\mathbb{R}^d,
\end{align}
the homogenization corrector $\phi_i \in H^1_{\loc}(\Rd^d)$ is defined as a sublinearly growing solution to the corrector equation
\begin{equation}
\label{defn_whole_space_corrector}
\begin{aligned}
-\nabla \cdot (a \nabla (x_i + \phi_i)) &= 0 && \text{in }\quad \, \Rd^d.
\end{aligned}
\end{equation}
Note that its rescaling $\phi_i^\eps(\cdot):=\eps\phi_i(\tfrac{\cdot}{\eps})$ solves the rescaled corrector equation $-\nabla \cdot (a^\eps (e_i+\nabla \phi_i^\eps))=0$, where we have abbreviated $a^\eps(\cdot):=a(\tfrac{\cdot}{\eps})$.

For elliptic, stationary, and ergodic random fields $a$, the limit $\eps\rightarrow 0$ of the solutions $u^\eps$ to \eqref{LinearEllipticRd} is given by the solution $\bar u$ to a constant-coefficient \emph{effective equation} $-\nabla \cdot(\bar a\nabla \bar u)=\nabla \cdot f$ \cite{Kozlov,PapanicolaouVaradhan}. Here, the \emph{effective coefficient} $\bar a$ is determined as $\bar a e_i := \mathbb{E}[a(e_i+\nabla \phi_i)]$.

Given the solution to the homogenized problem $\bar u$, the \emph{two-scale expansion}
\begin{align}
\label{2_scale}
u^\eps_{\twoscale} := \bar u + \phi_i^\eps \partial_i \bar{u}
\end{align}
provides an approximate solution to the original PDE \eqref{LinearEllipticRd}.
Many estimates in quantitative homogenization (both periodic and stochastic) follow from the energy estimate for the divergence-form elliptic equation satisfied by the ``homogenization error'' $u^\eps - u^\eps_{\twoscale}$.

In the case of the problem on a bounded domain $\domain$ with Dirichlet boundary conditions, even with $u^\eps=\bar u$ on $\partial\domain$ the homogenization error fails to vanish on the boundary: In this case we have $u^\eps - u^\eps_{\twoscale} = -\phi_i^\eps \partial_i \bar u$ on $\partial\domain$, which is in general nonzero.

The avoid this mismatch in the boundary data of $u^\eps$ and $u^\eps_{\twoscale}$ it is standard (for classical literature in the periodic setting see, \eg, \cite{AllaireAmar, GerardVaretMasmoudi, GerardVaretMasmoudi2}) to adapt the ansatz $u^\eps_\twoscale$ via a boundary layer corrector $\theta^{\eps}_i \in H^1_{\loc}(\domain)$. The latter is given as the weak solution of
\begin{equation}
\label{defn_theta}
\begin{aligned}
-\nabla \cdot (a^{\eps} \nabla \theta^{\eps}_i) &= 0 && \text{in }\quad \, \domain, 
\\
\theta^{\eps}_i &= \phi_i(\cdot/ \eps) && \text{on } \quad \partial \domain.
\end{aligned}
\end{equation}
The adjusted two-scale expansion then becomes 
\begin{align}
\label{boundary_2_scale}
u^\eps_{\textrm{2-scale}, \partial} := \bar u + (\phi_i^\eps - \eps \theta_i^\eps) \partial_i \bar{u}.
\end{align}
The adapted ``homogenization error'', given by $u^\eps - u^\eps_{\twoscale, \partial}$, now has homogeneous Dirichlet boundary data and solves the elliptic PDE
\begin{equation}
\label{homogenized_error_domain}
\begin{aligned}
-\nabla \cdot (a^\eps \nabla (u^\eps - u^\eps_{\twoscale, \partial})) &= \nabla \cdot ((a^\eps (\phi_i^\eps - \eps \theta_i^\eps) - \sigma_i^\eps) \nabla \partial_i \bar u ) + a^\eps \eps \nabla \theta_i^\eps \cdot \nabla \partial_i \bar u  && \text{in }\quad \, \domain, 
\\
u^\eps - u^\eps_{\twoscale, \partial} &= 0 && \text{on } \quad \partial \domain.
\end{aligned}
\end{equation}
Here, $\sigma^{\eps}$ is the so-called (whole-space) flux-corrector; in particular, $\sigma^{\eps}$ is a 3-tensor that is skew-symmetric in its last two indices and solves $\nabla_k \cdot \sigma^{\eps}_{ijk} = e_j \cdot (a^{\eps}(e_i + \nabla \phi^{\eps}_i) - \bar{a} e_i)$ on $\Rd^d$ \cite[Section 7.2]{ZikovKozlovOlejnik}.
Given suitable estimates on the correctors $\phi_i$, $\sigma_i$, and $\theta_i^\eps$, the error equation \eqref{homogenized_error_domain} entails bounds on $u^\eps-u^\eps_{\twoscale,\partial}$. As no suitable estimates on $\theta_i^\eps$ are available in the literature, our first task (in Proposition~\ref{intermediate_lemma} below) is to derive suitable bounds for $\eps \theta_i^\eps$ and $\eps \nabla \theta_i^\eps$.

Before turning to the statement of our main results, we would like to more thoroughly position this contribution within the existing literature on boundary regularity in the context of homogenization of elliptic equations. As previously mentioned, Avellaneda and Lin
\cite{AvellanedaLinCPAM} have derived a $C^{0,1}$ regularity theory up to the boundary in the setting of periodic homogenization and Dirichlet boundary conditions.
The work of Armstrong and Mourrat \cite{ArmstrongMourrat} provides a large-scale $C^{0,1}$ theory up to the boundary for random linear elliptic operators with Dirichlet boundary data. In \cite{ArmstrongShen} the case of almost-periodic coefficients is handled. We remark that in \cite{FischerRaithel}, the second and fourth authors obtained a large-scale $C^{1,\alpha}$-theory for the Dirichlet problem with random coefficients on the half-space. This required the iterative construction of a sublinear corrector with Dirichlet boundary conditions, corresponding to $\phi^\eps - \eps \theta^\eps$. The fourth author extended the methods of \cite{FischerRaithel} to the Neumann problem \cite{Raithel}. Using similar techniques, the second and fourth authors considered materials with interfaces \cite{JosienRaithel} (see also, \eg, \cite{Zhuge,Josien,Blanc_LeBris_Lions} for the periodic case) and the Dirichlet problem in general angular sectors \cite{JosienRaithelSchaeffner}. 

We remark, furthermore, that one of the main motivations of the authors for the current contribution is the use of its results in the companion paper \cite{BFJR_2} --in which we derive optimal estimates for the decay of the boundary layer corrector $\theta^\eps$ in the case of the half-space $\domain = \Hd$. While \cite{BFJR_2} is the, to the best of our knowledge, first contribution to quantify the boundary layer in the stochastic setting, there is a rich literature on this topic in the periodic case (see, \eg, \cite{AllaireAmar,ArmstrongKuusiMourratPrange,GerardVaretMasmoudi,GerardVaretMasmoudi2,ShenZhugeRegularity}). In particular, in \cite{ArmstrongKuusiMourratPrange} optimal homogenization rates are obtained for the oscillating Dirichlet problem (of which \eqref{defn_theta} with periodic $a$ is an example) on uniformly convex domains, significantly improving the pioneering result of \cite{GerardVaretMasmoudi}. Note that the optimal result for the case $d=2$ in \cite{ArmstrongKuusiMourratPrange} relies on an improved regularity result for the homogenized boundary data established in \cite{ShenZhugeRegularity}. The authors of \cite{ShenZhugeRegularity} subsequently also quantified the boundary layer in the case of oscillating Neumann boundary data~\cite{ShenZhugeNeumann}.
  
\smallskip\smallskip\smallskip
\noindent
\textbf{Notation.}
Throughout this paper, we consider the half-space $\Hd := \{ x \in \Rd^d : x_1 >0 \}$. By $B_r(x)$ we denote the ball of radius $r$ centered at $x$ and $B_r^+(x) := B_r(x) \cap \Hd$. When the ball is centered at $0$ we simply write $B_r$ and $B_r^+$ respectively. The coordinate functions are given as $x_i: = x \cdot e_i$ for $i = 1, \ldots, d$. When $\domain = \Hd$ we decompose $x = (x^\perp, x^\parallel)$, where $x^\perp = x_1$ and $x^\parallel = (x_2, \ldots, x_d)$.

For any $\delta >0$, we will use the convention $\domain_\delta := \{ x \in \domain : \dist(x, \partial \domain) < \delta \}$. When $\domain$ is a $C^{1,1}$-domain (not necessarily bounded), there exists $c(\domain)$ small enough such that for any $x \in \domain_{c(\domain)}$ there exists a unique point $p(x) \in \partial\domain$ that is the closest point to $x$. Notice that when $\domain = \Hd$, for $x \in \Hd$ we have that $p(x)=x^\parallel$.

We frequently use the convention that $``\lesssim"$ means $`` \leq C(d, \lambda, \nu, \alpha),"$ where $d$ denotes the dimension, $\lambda>0$ is the ellipticity ratio of the coefficient field $a$, and $\nu$ and $\alpha$ are from the H\"older regularity assumption (A4) below.

We use the notation $\mathcal{C}$ to denote a generic random constant (respectively, $\mathcal{C}(x)$ to denote a generic random field) whose value may change from line to line. Unless otherwise specified, the constant $\mathcal{C}$ (respectively the random field $\mathcal{C}(x)$) will be subject to a uniform stretched exponential moment bound.  

Aside from the caveat that, for us, $H^1_{\loc}(\domain):= \left\{v : v \in H^1(B_r(x)\cap\domain) \text{ for all } r>0, x \in \domain  \right\}$, we use standard notation for Sobolev spaces. In particular, $\dot H^1(\domain)$ refers to the space of (in case of unbounded $\domain$, decaying) $L^2_{loc}$ functions with square-integrable distributional derivative.

Throughout the paper, we use the Einstein summation convention, i.\,e.\ we sum over repeated indices such as in $A_i (e_i+\nabla \phi_i)$.

\section{Discussion of main results}

\subsection{Assumptions}

\label{setting_discussion}

Throughout this contribution we make the following assumptions:

\paragraph{\textit{Assumptions on the domain.}} For simplicity, we always assume that $d \geq 3$. Then, in different sections, we have varying assumptions on our domain:

\begin{itemize}
\item In Sections \ref{Prop_1_proof}, \ref{operators_on_domain}, and \ref{Meyers}, unless otherwise stated, we assume that $\domain \subseteq \Rd^d$ is either a bounded $C^{1,1}$-domain or the half-space $\Hd$.
\item In Section \ref{regularity_near_corner}, we assume that $\domain \subseteq \Rd^d$ is either a convex cone or a convex polytope. 
\end{itemize}

\paragraph{\textit{Assumptions on the ensemble.}} We assume that the coefficient field, $a$, is defined on the whole-space and satisfies the following conditions:

\begin{itemize}
\item[(A1)] We assume that $\langle\cdot\rangle$-a.\,s. the heterogeneous coefficients $a$ are \emph{uniformly elliptic and bounded}. In particular, there exists $\lambda>0$ such that 
\begin{align*}
|a(x)v|&\leq |v|,
\\
a(x)v \cdot v &\geq \lambda |v|^2,
\end{align*}
for all $v\in \mathbb{R}^d$ and a.\,e.\ $x\in \mathbb{R}^d$.
\item[(A2)] The ensemble $\langle\cdot\rangle$ is \emph{stationary}. In particular, the law of $a$ is invariant under spatial shifts; i.\,e. the statistics of $a$ are spatially independent.
\item[(A3)] We assume that $\langle\cdot\rangle$ satisfies a quantitative ergodicity assumption in form of a \emph{spectral gap inequality} in the following form: For any random variable $\xi$ the estimate
\begin{align}
\label{spectral_gap_L2}
\en{ (\xi - \en{\xi} )^2} \leq \en{ \int_{\Rd^d}  \Big( \fint_{B_{1}(x)} \big| \frac{\partial \xi}{ \partial a}  \big| \dd y \Big)^2  \dx}
\end{align}
holds. Here, $\fint_{B_{1}(x)} \big| \frac{\partial \xi}{ \partial a}  \big| \dd y$  is given as
\begin{align}
\label{deriv}
\sup_{\delta a}\limsup_{h \rightarrow 0 } \frac{\xi(a + h \delta a ) - \xi(a)}{h} 
\end{align}
 with the perturbations $\delta a: \Rd^d \rightarrow \mathbb{R}^{d \times d}$ being supported in $B_{1}(x)$ and satisfying $\| \delta a \|_{L^{\infty}(\Rd^d)} \leq 1$. When the random variable $\xi$ is Fr\'echet differentiable then \eqref{deriv} is $\big|\frac{\partial \xi}{\partial a(x)}\big|$. 
\item[(A4)] For $\alpha \in (0,1]$, we assume that $a$ is H\"older continuous with stretched exponential moments; \ie there exist $\nu >0$ and a stationary random field $\mathcal{C}(a,x)$ satisfying 
\begin{align}
\label{stretched}
\mathbb{E}[\exp(\nu \mathcal{C}(a,x)^\nu)]\leq 2
\end{align}
such that the H\"older bound
\begin{align}
\label{holder_norm}
\sup_{y,z\in B_{1}(x)}  \frac{|a(y)-a(z)|}{|y-z|^\alpha} \leq \mathcal{C}(a,x)
\end{align}
holds. Notice that the expectation on the \lhs of \eqref{stretched} does not depend on the point $x \in \mathbb{R}^d$ due to stationarity.  
\end{itemize}
\noindent
The assumptions (A1)-(A3) are a standard set of assumptions in stochastic homogenization --with the quantification of ergodicity (the qualitative assumption of which is canon in classical qualitative stochastic homogenization) via the spectral gap inequality (A3) being particularly amenable to the methods of \cite{GNO_final, GNO5,GO1}. The assumption (A4) gives us access to (local) Schauder estimates --avoiding classical counterexamples to regularity at small scales--, whereas large-scale regularity properties are derived from the homogenization process. Assumptions (A1)-(A3) will be required for all of our main results, while assumption (A4) will only be needed for certain pointwise statements.

Since our assumptions on the ensemble are relatively standard, they are satisfied by a variety of relevant ensembles --\eg, (local nonlinear functions of) Gaussian ensembles satisfying suitably strong decorrelation estimates. To avoid excessive repetition, we choose to omit the details here and instead refer the interested reader to, \eg, \cite[Section 2.1]{BFJR_2}.

\begin{remark}[Scalar PDEs versus systems]
While for notational convenience we employ scalar notation for the statement and the proofs of our results, with the exception of Theorem~\ref{PropositionExcessDecayConvex} our proofs also work in the case of linear elliptic systems and thus our results also apply \emph{mutatis mutandis} to the case of linear elliptic systems.
\end{remark}

\begin{remark}[The case $d=2$] Notice that we have avoided the case $d=2$. This is, in particular, for brevity --all of our results still hold for $d=2$, up to some logarithmic corrections.
\end{remark}

\subsection{A suboptimal decay estimate for the boundary layer corrector}

In order to prove a large-scale regularity result for random $a^\eps$-harmonic operators on bounded $C^{1,1}$-domains with homogeneous Dirichlet boundary data, we first require access to a suboptimal estimate for the decay of the boundary layer $\theta^\eps$ away from $\partial \domain$. Since in \cite{BFJR_2} we require this suboptimal estimate as an ingredient for obtaining the optimal decay for the boundary layer in the case $\domain = \Hd$, we also state and prove our proposition for $\domain = \Hd$.
\begin{proposition}
\label{intermediate_lemma} Adopt the assumptions (A1)-(A4), let $0<\eps\leq 1$, and let $\domain\subseteq \Rd^d$ be either a bounded $C^{1,1}$-domain or $\mathbb{H}^d_+$.
Let $\theta^\eps\in H^1_{loc}(\domain)$ denote the unique weak solution to \eqref{defn_theta} with $\lim_{r\rightarrow\infty} \fint_{B_r} \chi_\domain |\nabla \theta^\eps|^2 \dx =0$.
Then there exists a random field $\C(a^\eps,x)$ such that $\frac{\C(a^\eps,x)}{\eps}$ has stretched exponential moments (in the sense of \eqref{stretched}, uniformly in $\eps$ and $x$) and the estimate
\begin{align}
\label{intermediate}
|\eps \nabla \theta^{\eps} (x_0)|   \lesssim_{d, \lambda, \nu, \alpha} \mathcal{C}(a^\eps,x_0)\Big(1 + \frac{\dist(x_0,\partial\domain)}{\varepsilon}\Big)^{-\frac{1}{3}}
\end{align}
holds for any $x_0 \in \domain$.

In case of $\domain=\Hd$, the random field $\C(a^\eps,x)$ is additionally stationary \wrt shifts tangential to $\partial \Hd$.
\end{proposition}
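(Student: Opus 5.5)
Write $\Theta:=\eps\theta^\eps$. By \eqref{defn_theta}, $\Theta$ is $a^\eps$-harmonic in $\domain$ with boundary values $\phi^\eps$. We compare $\Theta$ with a cut-off version of the whole-space corrector, which has the same boundary data. Fix $x_0$, set $R:=\dist(x_0,\partial\domain)$, and assume $R\geq\eps$; otherwise the claim is just a local bound. Let $\eta_\rho$ be a cut-off with $\eta_\rho=1$ on $\domain_{\rho}$, supported in $\domain_{2\rho}$, and $|\nabla\eta_\rho|\lesssim\rho^{-1}$. The scale $\rho\in[\eps,R/4]$ is chosen at the end.

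\textbf{Step 1: energy bound.} Define $w:=\Theta-\eta_\rho\phi^\eps$. Then $w\in H^1_0(\domain)$, or the decaying version on $\Hd$, and
\begin{equation*}
-\nabla\cdot(a^\eps\nabla w)=\nabla\cdot\big(a^\eps\nabla(\eta_\rho\phi^\eps)\big).
\end{equation*}
Testing with $w$ gives the energy bound
\begin{equation*}
\int_\domain|\nabla w|^2\lesssim\int_{\domain_{2\rho}}|\nabla(\eta_\rho\phi^\eps)|^2\lesssim\int_{\domain_{2\rho}}\big(|\nabla\phi^\eps|^2+\rho^{-2}|\phi^\eps|^2\big).
\end{equation*}
This integral is not localized near $x_0$. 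On $\Hd$ it is infinite, so I would localize instead. I take a ball $B_{R}(p(x_0))$ and a cut-off in the tangential directions, and use Caccioppoli-type and Lipschitz estimates on the domain (Armstrong--Mourrat) to control the far field. Alternatively, I would write $w$ through the Green function of $-\nabla\cdot a^\eps\nabla$ on $\domain$ and use the Green-function decay $|\nabla G(x_0,y)|\lesssim\mathcal{C}\,\dist(y,\partial\domain)\,|x_0-y|^{-d}$ near the boundary. This decay follows from the large-scale boundary Lipschitz estimate.

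\textbf{Step 2: inserting corrector bounds.} In $d\geq3$ the corrector estimates under (A1)--(A3) give $|\phi^\eps|\lesssim\eps\,\mathcal{C}$ and $(\fint_{B_\eps}|\nabla\phi^\eps|^2)^{1/2}\lesssim\mathcal{C}$, with stretched exponential moments. The localized estimate from Step 1 then gives, heuristically, a gradient bound at $x_0$ of the form
\begin{equation*}
|\nabla w|(x_0)\lesssim\mathcal{C}\Big(\frac{\rho}{R}\Big)^{\beta}\Big(1+\frac{\eps}{\rho}\Big).
\end{equation*}
The factor $(\rho/R)^\beta$ comes from the boundary-layer volume $\rho R^{d-1}$ compared with $R^d$, after using $L^2\to L^\infty$ interior regularity on $B_{R/2}(x_0)$. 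The interior regularity uses (A4) via Schauder at scale $\eps$ together with large-scale $C^{0,1}$ regularity. Near $x_0$ we have $\eta_\rho=0$, so $\nabla\Theta(x_0)=\nabla w(x_0)$.

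\textbf{Step 3: optimization and the main difficulty.} There is a trade-off between the cut-off gradient $\rho^{-1}\eps$ and the layer width $\rho/R$. Balancing $(\rho/R)^{1/2}$ against $\eps/\rho$ gives $\rho=\eps^{2/3}R^{1/3}$, and hence the exponent $\big(\eps/R\big)^{1/3}$. This is exactly \eqref{intermediate}.

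The main obstacle is the conversion from the global or annular $L^2$ energy to a pointwise gradient bound with a random constant that has stretched exponential moments uniformly in $x_0$ and $\eps$. This requires two things:
\begin{itemize}
\item[(i)] Careful localization: a dyadic decomposition of $\partial\domain$ around $p(x_0)$, summed with the Green-function decay, using $d\geq3$ for summability.
\item[(ii)] Random constants that are spatial suprema of stationary fields, which must be absorbed by a union bound over dyadic annuli with polynomial weights.
\end{itemize}
On $\Hd$ every step is covariant under tangential shifts. The resulting $\mathcal{C}(a^\eps,x)$ is therefore tangentially stationary.
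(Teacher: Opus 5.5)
Your Step~2 carries all the quantitative content of the argument, and you only assert it heuristically. This is a genuine gap, not a detail.

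The energy identity for $w=\eps\theta^\eps-\eta_\rho\phi^\eps$ is global: it is infinite on $\Hd$, and on a bounded domain it is spread over all of $\partial\domain$. Caccioppoli alone does not localize it. To get either $\fint_{B_{R/2}(x_0)}|\nabla w|^2\lesssim \mathcal{C}\,\frac{\rho}{R}\big(1+\frac{\eps^2}{\rho^2}\big)$ or the Green-function decay you quote, you need decay of the solution operator of $-\nabla\cdot a^\eps\nabla$ with Dirichlet conditions \emph{up to the boundary}. That is a large-scale boundary Lipschitz/mean-value theory for the heterogeneous operator, with a minimal radius having stretched exponential moments under (A3), and tangentially stationary on $\Hd$.

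You cannot simply cite this here. To my knowledge, Armstrong--Mourrat work under mixing hypotheses rather than the spectral gap (A3). Moreover, in this paper the boundary mean-value property (Theorem~\ref{large_scale_reg_domain}) and the Green-function-type bounds (Corollaries~\ref{cor_Reg06} and~\ref{GradientBoundByIntegral}) are derived \emph{from} the averaged boundary-layer decay of Lemma~\ref{Prop_1_aux}. That lemma is the core of the statement you are proving. Granted such a theory, the duality argument behind Corollary~\ref{GradientBoundByIntegral}, applied to $w$, would give $|\eps\nabla\theta^\eps(x_0)|\lesssim\mathcal{C}(\rho+\eps)/R$, which is far more than $(\eps/R)^{1/3}$. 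So the black box is strictly stronger than the claim itself.

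The same missing input affects your treatment of $R<\eps$ as ``just a local bound''. Schauder theory there needs control of $\fint_{B_{2\eps}(x_0)\cap\domain}|\eps\nabla\theta^\eps|^2$, and the global energy does not give this on $\Hd$.

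Step~3 also does not derive the exponent. Minimizing your own expression $(\rho/R)^\beta(1+\eps/\rho)$ gives $\rho\sim\eps$ and the bound $(\eps/R)^\beta$. The choice $\rho=\eps^{2/3}R^{1/3}$ instead comes from balancing $(\rho/R)^{1/2}$ against $\eps/\rho$, and these are not competing terms in that expression.

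The paper avoids any boundary regularity for $a^\eps$. It proves the averaged bound $\fint_{B_r(x_0)\cap\domain}|\eps\nabla\theta^\eps|^2\le\mathcal{C}(1+r/\eps)^{-2/3}$ for $x_0\in\partial\domain$ (Lemma~\ref{Prop_1_aux}) by a Campanato iteration in flattened coordinates. There, $\eps\theta^\eps_i-\zeta\phi^\eps_i$ is compared with the $\hat{\bar a}(0)$-harmonic function having the same boundary values. The error is built from the two-scale expansion with the Dirichlet corrector $\phi^\eps-\eps\theta^\eps$. The only inputs are the Hardy-weighted energy estimate (Lemma~\ref{Hardy}), constant-coefficient boundary regularity (Lemma~\ref{constant_coeff_reg}), and whole-space corrector bounds.

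The iteration starts from $\int_\domain|\eps\nabla\theta^\eps|^2\le\mathcal{C}\eps$ on bounded domains. On $\Hd$ it starts from Lemma~\ref{exp_2} combined with Markov's inequality and Borel--Cantelli over dyadic scales.

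The pointwise estimate then follows in two ways. When $x_0$ is far from the boundary, apply the averaged bound at $p(x_0)$ with $r=2\dist(x_0,\partial\domain)$, together with interior large-scale regularity on $B_{\dist(x_0,\partial\domain)/2}(x_0)$. When $\dist(x_0,\partial\domain)\lesssim\mathcal{C}\eps$, use Schauder theory. The exponent $\frac13$ is simply the square root of the rate $\frac23$ propagated in the iteration; it does not come from a cut-off optimization.
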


\noindent The proof of Proposition \ref{intermediate_lemma} is given in Section \ref{Prop_1_proof} and, while relying on stochastic ingredients from \cite{GNO_final,GNO5}, is entirely deterministic here. 

The main step for proving Proposition \ref{intermediate_lemma} is Lemma \ref{Prop_1_aux}, which is a spatially averaged decay estimate for the boundary layer corrector --the argument for which relies on a homogenization infused Campanato iteration to compare the boundary layer with a homogenized counterpart at large-scales. As already mentioned above, this is a standard technique in both periodic and stochastic homogenization. 

We remark that the spatially averaged decay estimate of Lemma \ref{Prop_1_aux} holds under the assumptions (A1)-(A3). In order to obtain \eqref{intermediate}, the result of Lemma \ref{Prop_1_aux} must be post-processed under the additional inclusion of assumption (A4).

\subsection{Regularity for random $a^\eps$-harmonic functions on regular domains}

Using the suboptimal decay of the boundary layer from Proposition \ref{intermediate_lemma}, we are then able to obtain the following large-scale regularity result for random $a^\eps$-harmonic functions with homogeneous Dirichlet 
boundary data on bounded $C^{1,1}$-domains or on $\Hd$.
Note that like the original Avellaneda-Lin regularity theorems \cite{AvellanedaLinCPAM,AvellanedaLinCRAS89} and in contrast to standard Schauder theory (the latter of which is applicable on small scales $\lesssim \eps$, assuming a H\"older regular coefficient field $a$), Theorem~\ref{large_scale_reg_domain} establishes regularity results for the operator $-\nabla \cdot a(\tfrac{\cdot}{\eps}) \nabla$ on scales $\gg \eps$.

\begin{theorem} \label{large_scale_reg_domain} Adopt the assumptions (A1) - (A3), let $0<\eps\leq 1$, and let $\domain\subseteq \Rd^d$ be either a bounded $C^{1,1}$-domain or $\Hd$. Then there exists a random field $r_{\domain}^*(a^\eps,x) \geq \varepsilon$ such that $\frac{r_{\domain}^*(a^\eps,x)}{\varepsilon}$ has stretched exponential moments (in the sense of \eqref{stretched}, uniformly in $\eps$ and $x$) and such that the following holds true:

Given any $0<R<c(\domain) $ and any $x_0 \in \overline{\domain}$, let $u \in H^1_{\loc}(\domain)$ solve 
\begin{equation}
\label{u_with_massive_term}
\begin{aligned}
-\nabla \cdot (a^\eps\nabla u) &= 0 && \text{in } \quad  B_R(x_0) \cap \domain ,\\
u & = 0 && \text{on } \quad  B_R(x_0) \cap \partial \domain.
\end{aligned}
\end{equation}
Then for any $r$ with $r_{\domain}^*(a^\eps,x_0) \leq r \leq R$, the excess-decay property 
\begin{align}
\label{excess_decay_thm}
& \inf_{A\in \mathbb{R}^d} \fint_{\domain \cap B_r(x_0)} |\nabla u^\eps - A \cdot \nabla ( x + \phi^\eps - \eps \theta^\eps) |^2 \dx
  \lesssim_{d, \lambda, \alpha, \domain} \bigg( \Big(\frac{r}{R}\Big)^{2\alpha} + \frac{r^2}{c(\domain)}\bigg)
\fint_{\domain \cap B_R(x_0)} |\nabla u^\eps|^2 \dx
\end{align}
holds.
Furthermore, for any $r$ with $r_{\domain}^*(a^\eps,x_0) \leq r \leq R$ the large-scale mean-value property 
\begin{align}
\label{large_scale_mvp_domain}
\fint_{\domain \cap B_r(x_0)} |\nabla u|^2 \dx  \lesssim_{d, \lambda, \domain} \fint_{\domain \cap B_R(x_0)} |\nabla u|^2 \dx
\end{align}
holds.

In case of $\domain=\Hd$, it holds that $c(\domain) = \infty$ and the random field $r_{\domain}^*$ is additionally stationary \wrt shifts tangential to $\partial \Hd$.
\end{theorem}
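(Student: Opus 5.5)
The plan is a homogenization-infused Campanato iteration in the spirit of Avellaneda--Lin and \cite{FischerRaithel}. The Dirichlet-adapted corrector $\phi^\eps-\eps\theta^\eps$ plays the role of the half-space sublinear corrector, and Proposition~\ref{intermediate_lemma} (or rather its averaged counterpart, Lemma~\ref{Prop_1_aux}, since only (A1)--(A3) are assumed) provides the needed sublinearity near the boundary.

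\textbf{Step 1: one-step comparison.} Fix $x_0$ and a scale $\rho$ with $r^*_\domain\le\rho\le R$. Let $\bar u$ be the $\bar a$-harmonic function in $\domain\cap B_{\rho/2}(x_0)$ that vanishes on $\partial\domain\cap B_{\rho/2}(x_0)$ and equals $u$ on the rest of the boundary. To make the boundary data regular enough, I first pick a good radius by Fubini and use the Meyers/Caccioppoli estimate. I then compare $u$ with the corrected two-scale expansion $\bar u+(\phi_i^\eps-\eps\theta_i^\eps)\eta\,\partial_i\bar u$, where $\eta$ is a cutoff away from the artificial part of the boundary. The error equation \eqref{homogenized_error_domain} and the energy estimate bound
\begin{align*}
\fint_{\domain\cap B_{\rho/4}}|\nabla u-\nabla u_{\twoscale,\partial}|^2
\end{align*}
by $\delta(\rho)\fint_{\domain\cap B_\rho}|\nabla u|^2$. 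The factor $\delta(\rho)$ collects the rescaled sizes
\begin{align*}
\rho^{-2}\fint_{B_\rho}|\phi^\eps,\sigma^\eps|^2,\qquad \rho^{-2}\fint_{B_\rho\cap\domain}|\eps\theta^\eps|^2,\qquad \fint_{B_\rho\cap\domain}|\eps\nabla\theta^\eps|^2,
\end{align*}
each multiplied by weighted $L^\infty$ bounds on $\nabla^2\bar u$ near the boundary. The term $a^\eps\eps\nabla\theta^\eps\cdot\nabla\partial_i\bar u$ is where Lemma~\ref{Prop_1_aux} enters: the averaged gradient of $\eps\theta^\eps$ decays like $(\eps/\dist)^{1/3}$, which is integrable against the boundary blow-up of $\nabla^2\bar u$ only after a thin boundary layer of width $\sim\rho^{1-\kappa}\eps^\kappa$ is cut off. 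I define $r^*_\domain$ as the smallest scale above which $\delta(\rho)\le(\eps/\rho)^{\beta}$ holds for some $\beta>0$ at all dyadic larger scales. Its stretched exponential moments follow from those of $\phi,\sigma$ \cite{GNO_final,GNO5} and the averaged bounds of Lemma~\ref{Prop_1_aux}, via a union bound over dyadic scales.

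\textbf{Step 2: homogenized excess decay.} For $\bar u$, the classical $C^{1,1}$-domain boundary Schauder theory (after flattening at $p(x_0)$) gives
\begin{align*}
\inf_A\fint_{B_r\cap\domain}|\nabla\bar u-A\,\nabla\xi|^2\lesssim\Big(\big(\tfrac r\rho\big)^{2\alpha}+\tfrac{r^2}{c(\domain)}\Big)\fint|\nabla\bar u|^2.
\end{align*}
Here $\nabla\xi$ is the normal direction. The curvature term $r^2/c(\domain)$ arises because affine functions vanishing on the tangent plane do not exactly vanish on $\partial\domain$. Since $\bar u=0$ on the boundary, the minimizing $A$ is normal to $\partial\domain$ at $p(x_0)$ up to curvature errors. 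This matches the structure $A\cdot\nabla(x+\phi^\eps-\eps\theta^\eps)$, whose boundary trace vanishes.

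\textbf{Step 3: transfer and iteration.} Combining Steps 1 and 2 gives a one-step decay $E(\theta\rho)\le(\tfrac12+C\delta(\rho)\theta^{-d})E(\rho)+\dots$ for the excess $E$ defined by the infimum in \eqref{excess_decay_thm}, with $\theta$ fixed small. The bound $\sum_k\delta(\theta^{-k}r)<\infty$ above $r^*$ lets me iterate from $R$ down to $r$, which yields \eqref{excess_decay_thm}. The mean-value property \eqref{large_scale_mvp_domain} then follows from the standard argument of tracking the sequence of slopes $A_k$. Their increments are summable, and since $\fint|\nabla(x+\phi^\eps-\eps\theta^\eps)|^2\sim1$ above $r^*$, the quantity $|A_k|^2$ controls the energy. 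Cases where $B_R(x_0)$ does not meet $\partial\domain$, or meets it only at distance $\ge R/4$, are reduced to the interior theory of \cite{GNO_final}, or to a boundary ball centered at $p(x_0)$.

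\textbf{Main obstacle.} The hardest part is Step 1 near the boundary. The averaged decay of $\eps\nabla\theta^\eps$ is only of rate $1/3$ and is non-uniform up to $\partial\domain$, while $\nabla^2\bar u$ is controlled only in weighted form there. Balancing the cutoff width of the boundary layer against these two effects, so as to still get a small power $\delta(\rho)\lesssim(\eps/\rho)^\beta$, is the delicate computation. I would first try the weighted estimate $\dist^{1/2}|\nabla^2\bar u|\in L^2$ combined with a Hardy inequality.
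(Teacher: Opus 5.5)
Your outline is the paper's outline: a Campanato iteration with the Dirichlet-adapted corrector $\phi^\eps-\eps\theta^\eps$, Lemma~\ref{Prop_1_aux} as the only input on $\theta^\eps$, reduction of interior points to $p(x_0)$, and slope tracking for \eqref{large_scale_mvp_domain}. There is, however, a genuine gap. The estimate that carries the proof, namely the homogenization error near $\partial\domain$, is left open, and the difficulty you describe comes from your choice of comparison function.

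\emph{Why your setup is hard.} An $\bar a$-harmonic $\bar u$ on the curved $C^{1,1}$ domain is only $W^{2,p}$ ($p<\infty$) up to $\partial\domain$, so $\nabla^2\bar u$ is in general unbounded there. This affects every term $\int(|\phi^\eps|^2+|\sigma^\eps|^2+|\eps\theta^\eps|^2)|\nabla^2\bar u|^2$, not only the $\theta$-term. For the $\theta$-term alone your Hardy idea would suffice, with no layer cut off: write $\int|fw|\le\|d_\partial f\|_{L^2}\|w/d_\partial\|_{L^2}$ with $d_\partial:=\dist(\cdot,\partial\domain)$, and use $d_\partial|\nabla^2\bar u|\lesssim\sup|\nabla\bar u|$. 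But the $\phi^\eps$- and $\sigma^\eps$-terms enter in divergence form, and $\sigma^\eps$ does not even vanish on $\partial\domain$. Hardy is useless there, and you would need bounds on the correctors in thin layers along $\partial\domain$, uniformly in the base point.

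\emph{The missing idea.} Straighten the boundary and then \emph{freeze} the homogenized coefficient at the base point. That is, compare with $\tilde v$ solving $-\nabla\cdot(\hat{\bar a}(0)\nabla\tilde v)=0$ in $B^+_{R/2}$, with $\tilde v=\tilde u$ on $\partial B^+_{R/2}$.
\begin{itemize}
\item Lemma~\ref{constant_coeff_reg} then bounds $\rho^2|\nabla^2\tilde v|^2+|\nabla\tilde v|^2$ on $B^+_{R/2-\rho}$ by $(R/\rho)^d\fint_{B^+_{R/2}}|\nabla\tilde v|^2$, \emph{up to the flat boundary}.
\item The price is the divergence-form error $(\hat{\bar a}-\hat{\bar a}(0))\nabla\tilde v=O(R|\nabla\tilde v|)$ plus lower-order terms involving $\nabla D\gamma^{-1}$. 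These generate the curvature term in \eqref{excess_decay_thm}.
\item With a bounded Hessian, all corrector terms are controlled by plain averages over $B^+_{R/2}$.
\item The non-divergence term is handled by Poincar\'e as $R^2\sup_{B^+_{R/2-\rho}}|\nabla(\tilde\eta\,\partial\tilde v)|^2\int_{B^+_{R/2}}|\eps\nabla\widetilde{\theta^\eps}|^2$. Lemma~\ref{Prop_1_aux}, applied on the single half-ball centered at $x_0\in\partial\domain$, gives the factor $(\eps/R)^{2/3}$.
\end{itemize}
This also avoids a second problem. The decay of $\eps\nabla\theta^\eps$ in $\dist(\cdot,\partial\domain)$ that your Step~1 invokes is Proposition~\ref{intermediate_lemma}, which needs (A4). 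Under (A1)--(A3) you only have the half-ball averages of Lemma~\ref{Prop_1_aux}, with random constants that depend on the center. Your Fubini/Meyers treatment of the artificial boundary is fine; the paper uses Lemma~\ref{Hardy} there instead.

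\emph{The boundary trace.} The trace of $A\cdot(x+\phi^\eps-\eps\theta^\eps)$ on $\partial\domain$ is $A\cdot x$, not $0$; only $\phi^\eps-\eps\theta^\eps$ vanishes. So $u-A\cdot(x+\phi^\eps-\eps\theta^\eps)$ violates the Dirichlet condition, your one-step comparison cannot be applied to it, and the excess-to-excess decay of your Step~3 does not follow as written. The paper proves the one-step bound \eqref{BasicIterationEstimate} relative to $\fint|\nabla\tilde u|^2$. It then applies it to $\tilde u$ minus a multiple of $y_1+\widetilde{\phi^\eps_1}-\eps\widetilde{\theta^\eps_1}-\mathcal W^R_1$, where $\mathcal W^R_1$ solves $-\nabla\cdot(\widehat{a^\eps}\nabla\mathcal W^R_1)=\nabla\cdot(\widehat{a^\eps}(\nabla\gamma_1-e_1))$ in $B^+_R$ with zero data and corrects the curvature defect. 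This produces the extra $CR|A_R|$ in \eqref{IterationBound}. That term is harmless because it is proportional to the scale and hence summable along $\tau^kR$. Your ``$+\dots$'' has to be this term, and the same summation yields the bound on $|A_R|$ and thus \eqref{large_scale_mvp_domain}.
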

\noindent
We recall that in the particular case of the half-space $\domain=\Hd$, Theorem~\ref{large_scale_reg_domain} was already established in \cite{FischerRaithel}, though without explicit quantification of the minimal radius $r_\domain^*(a^\eps,x_0)$.
The argument for Theorem \ref{large_scale_reg_domain}, contained in Section \ref{operators_on_domain}, is similar to that of Lemma \ref{Prop_1_aux}. It also requires the result of Lemma \ref{Prop_1_aux}, a spatially averaged version of Proposition~\ref{intermediate_lemma}, as an input.

Combining the large-scale regularity of Theorem \ref{large_scale_reg_domain} with classical Schauder estimates at small scales (available to us by additionally imposing assumption (A4)), we obtain two useful corollaries.

\begin{corollary}
\label{cor_Reg06}
Suppose that assumptions (A1)--(A4) are satisfied, let $0<\eps\leq 1$, and let $\domain\subseteq \Rd^d$ be either a bounded $C^{1,1}$-domain or $\Hd$. Then there exists a random field $\mathcal{C}(a^\eps,x)$ with stretched exponential moments (in the sense of \eqref{stretched}, uniformly in $\eps$ and $x$) such that for any $0<r<c(\domain)$ and for any $x_0\in \overline{\domain}$ the following is true: Any weak solution $u \in H^1(\domain\cap B_r(x_0))$ of the PDE
\begin{equation*}
\begin{aligned}
-\nabla \cdot (a^\eps\nabla u) &= 0
&&\text{in }\domain\cap B_r(x_0),
\\
u&=0
&&\text{on }\partial\domain\cap B_r(x_0),
\end{aligned}
\end{equation*}
satisfies the estimate
\begin{equation}
\label{Reg06}
|\nabla u(x_0)|  \leq \mathcal{C}(a^\eps,x_0) \Big( \fint_{B_r(x_0)\cap \domain} |\nabla u|^2 \dx \Big)^{\frac12}.
\end{equation}
In case of $\domain=\Hd$, the random field $\C(a^\eps,x)$ is additionally stationary \wrt shifts tangential to $\partial \Hd$ and it holds that $c(\domain)=\infty$.
\end{corollary}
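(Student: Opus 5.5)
We combine the large-scale mean-value property \eqref{large_scale_mvp_domain} of Theorem~\ref{large_scale_reg_domain} with classical Schauder theory on scales of order $\eps$. We distinguish two cases according to whether $r$ is above or below the minimal radius.

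\emph{Case 1: $r \geq r^*_\domain(a^\eps,x_0)$.} Set $\rho := r^*_\domain(a^\eps,x_0)\geq \eps$ and apply \eqref{large_scale_mvp_domain} with $R=r$ to get
\begin{equation*}
\fint_{\domain\cap B_\rho(x_0)}|\nabla u|^2\dx \lesssim \fint_{\domain\cap B_r(x_0)}|\nabla u|^2\dx .
\end{equation*}
It then suffices to bound $|\nabla u(x_0)|^2 \leq \mathcal{C}\fint_{\domain\cap B_\rho(x_0)}|\nabla u|^2\dx$.

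\emph{Case 2: $r< r^*_\domain$.} Here we set $\rho:=r$ directly. In both cases the small-scale bound is needed on a ball of radius $\rho\le r^*_\domain$.

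\textbf{Small-scale Schauder step.} Rescale by $\eps$: $\tilde u(y):=u(x_0+\eps y)$ solves $-\nabla\cdot(a(\tfrac{x_0}{\eps}+y)\nabla\tilde u)=0$ in the rescaled domain $(\domain-x_0)/\eps$, with zero Dirichlet data on its boundary. This domain is $C^{1,1}$ with curvature $\lesssim\eps$, hence at least as flat as $\domain$ for $\eps\le 1$. By (A4), $a$ is $C^{0,\alpha}$ on unit balls with constant $\mathcal{C}(a,\cdot)$. Boundary and interior Schauder estimates for divergence-form operators with $C^{0,\alpha}$ coefficients on $C^{1,1}$ domains then give
\begin{equation*}
|\nabla u(x_0)|^2\lesssim (1+\mathcal{C}_{\rm loc})^{\kappa}\fint_{\domain\cap B_{\eps}(x_0)}|\nabla u|^2\dx ,
\end{equation*}
with an explicit polynomial dependence $\kappa=\kappa(d,\alpha)$ on the Hölder seminorm. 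Here $\mathcal{C}_{\rm loc}$ is the maximum of $\mathcal{C}(a,\cdot)$ over finitely many unit balls covering $B_1(x_0/\eps)$. This step requires a quantitative form of Schauder theory in which the dependence on the Hölder constant is tracked, obtained by the usual scaling argument on balls of radius $\sim \mathcal{C}_{\rm loc}^{-1/\alpha}$. It also needs boundary estimates that are uniform over the $C^{1,1}$ domain family, which is where $r<c(\domain)$ is used. Finally, the trivial bound
\begin{equation*}
\fint_{B_\eps}|\nabla u|^2\le (\rho/\eps)^d\fint_{B_\rho}|\nabla u|^2
\end{equation*}
transfers the estimate from scale $\eps$ to scale $\rho$; the case $\rho<\eps$ is handled by Schauder on $B_\rho$ directly.

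\textbf{The random constant.} We obtain
\begin{equation*}
\mathcal{C}(a^\eps,x_0):=C\,(1+\mathcal{C}_{\rm loc})^{\kappa/2}\,(r^*_\domain/\eps)^{d/2},
\end{equation*}
and the expected main obstacle is verifying its moments. By Theorem~\ref{large_scale_reg_domain}, $r^*_\domain/\eps$ has stretched exponential moments uniformly in $\eps$ and $x_0$. The field $\mathcal{C}_{\rm loc}$ has them by (A4) and stationarity, with a finite maximum over a cover costing only constants. Products and powers of such variables again have stretched exponential moments with a smaller exponent $\nu'$, by Hölder's inequality and $\exp(\nu' (XY)^{\nu'})\le \exp(\nu'X^{2\nu'})+\exp(\nu'Y^{2\nu'})$. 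Tangential stationarity in the case $\domain=\Hd$ is inherited from that of $r^*_\domain$ and of $\mathcal{C}(a,\cdot)$. For $\Hd$ there is no curvature restriction, so $c(\domain)=\infty$.
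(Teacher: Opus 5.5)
Your proof is correct and follows the paper's argument. It uses small-scale Schauder theory (enabled by (A4)) near $x_0$; the paper works at scale $\eps/\mathcal{C}(a^\eps,x_0)$, which is equivalent to your scale-$\eps$ estimate with polynomially tracked H\"older constant. This is followed by a trivial volume comparison up to $r^*_\domain(a^\eps,x_0)$ and the large-scale mean-value property \eqref{large_scale_mvp_domain} from $r^*_\domain$ to $r$. Your explicit treatment of the case $r<r^*_\domain$ and of the moment bounds for the resulting product of random variables fills in details the paper leaves implicit.
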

\noindent
We emphasize that the key contribution of Corollary~\ref{cor_Reg06} is that the estimate \eqref{Reg06} is uniform in $\eps$; for bounded domains and for $a$ satisfying (A4), an estimate of the form \eqref{Reg06} but with $\eps$-dependent constant is an immediate consequence of standard Schauder theory.

\begin{corollary}
\label{GradientBoundByIntegral}
Adopt the assumptions (A1)-(A4), let $0<\eps\leq 1$, and let $\domain\subseteq \Rd^d$ be either a bounded $C^{1,1}$-domain or $\mathbb{H}^d_+$. Let $u \in \dot H^1_0(\domain)$ be a weak solution of
\begin{equation*}
\begin{aligned}
-\nabla \cdot (a^\eps\nabla u) &= \nabla \cdot g + f
&&\text{in }\domain,
\\
u&=0
&&\text{on }\partial\domain.
\end{aligned}
\end{equation*}
Then there exists a random field $\mathcal{C}(a^\eps,x)$ with stretched exponential moments (in the sense of \eqref{stretched}, uniformly in $\eps$ and $x$) such that the bound
\begin{align}
\label{BoundSolutionRandomOperator}
|\nabla u(x_0)| 
\leq&
\mathcal{C}(a^\eps,x_0)
\int_{\domain} \frac{|g| + |f| \dist(x,\partial\domain)}{| x_0- x|^d} \dx
\end{align}
holds for all $x_0 \in \domain$. 

In case of $\domain=\Hd$, the random field $\C(a^\eps,x)$ is additionally stationary \wrt shifts tangential to $\partial \Hd$.
\end{corollary}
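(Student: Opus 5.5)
The plan is to represent $\nabla u(x_0)$ through the Green's function of $-\nabla\cdot a^{\eps,*}\nabla$ on $\domain$ with homogeneous Dirichlet data. Pointwise bounds on its mixed derivatives then follow from Corollary~\ref{cor_Reg06}, applied once in each variable.

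Let $G(x,y)$ denote the Green's function of $-\nabla\cdot a^\eps\nabla$ on $\domain$ with Dirichlet data. Then
\begin{align*}
u(x)=\int_\domain \big(-\nabla_y G(x,y)\cdot g(y)+G(x,y)f(y)\big)\dy,
\end{align*}
so
\begin{align*}
|\nabla u(x_0)|\le \int_\domain |\nabla_x\nabla_y G(x_0,y)||g(y)|+|\nabla_x G(x_0,y)||f(y)|\dy .
\end{align*}
It therefore suffices to prove two kernel bounds:
\begin{align*}
|\nabla_x\nabla_yG(x_0,y)|&\le \C(a^\eps,x_0)\C'(y)\,|x_0-y|^{-d},\\
|\nabla_x G(x_0,y)|&\le \C\C'\dist(y,\partial\domain)|x_0-y|^{-d}.
\end{align*}
The factor $\C'(y)$, which depends on $y$, cannot be pulled out of the integral. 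The statement has only $\C(a^\eps,x_0)$, so this factor has to be removed. I would do this by dualizing first, as follows.

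Fix $x_0$ and a unit vector $e$. The map $(g,f)\mapsto e\cdot\nabla u(x_0)$ is evaluated by testing the equation against $\nabla_x G(x_0,\cdot)\,e=:w$. This $w$ is the solution of the adjoint equation with a dipole source at $x_0$; in practice I would use a mollified dipole on $B_\eps(x_0)$ and control the difference by the small-scale Schauder estimate from (A4). Then
\begin{align*}
|\nabla u(x_0)|\le\int_\domain |\nabla w||g|+|w||f|\dx.
\end{align*}
Since $\dist(y,\partial\domain)$ appears on the right-hand side and $w=0$ on $\partial\domain$, the $f$ term reduces to $|w(y)|\le \dist(y,\partial\domain)\sup|\nabla w|$ near $y$. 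So everything hinges on a pointwise bound
\begin{align*}
|\nabla w(y)|\le \C(a^\eps,x_0)\,\C(y)\,|x_0-y|^{-d}.
\end{align*}

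To obtain it, decompose $\domain$ into dyadic annuli $A_k=\{2^{k}\le|y-x_0|<2^{k+1}\}$, restricted to the bounded range for bounded domains. On each annulus, $w$ is $a^{\eps,*}$-harmonic away from $x_0$ with zero Dirichlet data. The annealed version of Corollary~\ref{cor_Reg06} and Theorem~\ref{large_scale_reg_domain}, applied to the adjoint coefficient, gives
\begin{align*}
\Big(\fint_{A_k}|\nabla w|^2\Big)^{1/2}\lesssim \C(a^\eps,x_0)\,2^{-kd}.
\end{align*}
This comes from the energy bound for the dipole plus the large-scale mean-value property \eqref{large_scale_mvp_domain} centred at $x_0$, together with the duality and Caccioppoli decay standard in Green's function estimates. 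Here the randomness sits only at $x_0$ through $r^*_\domain(x_0)$.

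The main obstacle is the last step. Passing from these $L^2$ annular bounds to the integral against $|g|$ is fine with $L^2$ norms in $g$, but \eqref{BoundSolutionRandomOperator} has $|g|$ in $L^1$-type form and so needs pointwise control of $\nabla w(y)$. That pointwise control brings in a random constant at $y$. I would first try to absorb it by applying Corollary~\ref{cor_Reg06} on the $u$ side instead. Split $g=\sum_k g\chi_{A_k}$ and similarly for $f$. For the far pieces ($k$ large), $u_k$ is $a^\eps$-harmonic in $B_{2^{k-1}}(x_0)\cap\domain$. Then \eqref{Reg06} at $x_0$ gives
\begin{align*}
|\nabla u_k(x_0)|\le\C(x_0)\Big(\fint_{B_{2^{k-1}}}|\nabla u_k|^2\Big)^{1/2},
\end{align*}
and the latter is bounded by $2^{-kd}\int_{A_k}|g|$ using the $L^1\to$ energy duality, with a pointwise adjoint bound at scale $2^k$ that can again be made stationary. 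The piece with $k$ small, meaning the near field $|y-x_0|\lesssim \eps$, is handled by deterministic Schauder theory via (A4), since the Hölder constant is a random field at $x_0$. I expect the bookkeeping of this $L^1$ duality, and especially keeping all random constants located at $x_0$, to be the delicate part.
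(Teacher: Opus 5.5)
\textbf{The proposal has a gap at its last step.} Everything up to the two-point kernel bound matches the paper's own argument, which runs as follows. It solves the adjoint problem with source $\nabla\delta_{x_0}$ and writes $\nabla u(x_0)=\int_\domain (g\cdot\nabla w-fw)\dx$. It controls $\nabla w$ in $L^2(B_r(y)\cap\domain)$, $r=\frac12|y-x_0|$, by duality: test with $\eta$ supported there and apply \eqref{Reg06} at $x_0$ to the dual solution, which is $a^\eps$-harmonic in $B_{r/2}(x_0)$. It then applies \eqref{Reg06} at $y$ to get $|\nabla w(y)|\le \C(a^\eps,y)\C(a^\eps,x_0)|y-x_0|^{-d}$, and uses $w=0$ on $\partial\domain$ to bound $|w(y)|$.

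The paper then inserts this into the representation formula and stops. So what its argument actually yields has $\C(a^\eps,x_0)\C(a^\eps,x)$ inside the integral. Your remark that the factor at $y$ cannot simply be pulled out is correct, and the paper does not address it.

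The gap is the device you propose for removing that factor. The bound $\left(\fint_{B_{2^{k-1}}(x_0)\cap\domain}|\nabla u_k|^2\dx\right)^{1/2}\lesssim 2^{-kd}\int_{A_k}|g|\dx$ is, by duality, a bound on $\sup_{A_k}|\nabla v_\eta|$ for adjoint solutions $v_\eta$ with sources in $B_{2^{k-1}}(x_0)$. Estimate \eqref{Reg06} gives this only with the factor $\sup_{y\in A_k}\C(a^\eps,y)$. That factor is not attached to $x_0$, and its moments grow with $|A_k|/\eps^d$. So there is no stationary ``pointwise adjoint bound at scale $2^k$''.

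Nor can any rearrangement help if the field is independent of the data. Letting $g$ concentrate at a point $y$ forces $\C(a^\eps,x_0)\geq |x_0-y|^d\,|\nabla_x\nabla_y G(x_0,y)|$ for every $y$. That quantity is governed by the coefficient field near $y$ and is in general unbounded in $y$.

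There are two correct ways to conclude:
\begin{itemize}
\item Keep the two-point bound itself, with $\C(a^\eps,x)$ inside the integral.
\item For fixed deterministic $f,g$, use a data-dependent constant. With $k(y):=\frac{|g(y)|+|f(y)|\dist(y,\partial\domain)}{|x_0-y|^d}$, set $\tilde{\C}(x_0):=\C(a^\eps,x_0)\int_\domain \C(a^\eps,y)k(y)\dy / \int_\domain k(y)\dy$. By Minkowski's inequality, the $\langle|\cdot|^p\rangle^{1/p}$-norm of the averaged factor is at most $\sup_y\langle \C(a^\eps,y)^p\rangle^{1/p}$. By H\"older, $\tilde{\C}$ then has stretched exponential moments with a smaller exponent. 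The price is that it depends on $(f,g)$ and loses tangential stationarity.
\end{itemize}
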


\noindent Notice that the \rhs of \eqref{BoundSolutionRandomOperator} is infinite whenever $x_0$ is a Lebesgue point for either $f$ or $g$ and if $f(x_0)\neq 0$ respectively $g(x_0)\neq 0$ holds.

\subsection{A large-scale weighted Meyers estimate}


As an important application of the mean value property \eqref{large_scale_mvp_domain} established in Theorem~\ref{large_scale_reg_domain}, we next prove a large-scale weighted Meyers estimate for random $a^\eps$-harmonic operators on either $C^{1,1}$-domains or $\Hd$. Introduced by Gloria, Neukamm, and Otto \cite[Step 3 of Proposition 3]{GNO_reg_3}, estimates of the below type have become standard tools in estimating fluctuations in quantitative stochastic homogenization of elliptic PDEs (see, \eg, also \cite[Lemma 4.3]{BellaFehrmanFischerOtto} or \cite[Lemma 44]{FN_2020}). Since, to the best of our knowledge, such results have not been proven on regular bounded domains or on the half-space, we provide them here.
\begin{proposition}[Weighted Meyers Estimate]
\label{sens_Lm0}
 Adopt the assumptions (A1) - (A3), let $0<\eps\leq 1$, and let $\domain\subseteq \Rd^d$ be either a bounded $C^{1,1}$-domain or $\Hd$. Furthermore, let $x_0 \in \domain$ and $R \geq \max\{r_{\domain}^*(a^\eps,x_0),r_{\domain}^*((a^\eps)^\transpose,x_0)\}$ (where $r_\domain^*$ is defined in Theorem \ref{large_scale_reg_domain}). For $f \in L^2(\domain)$ and $g \in L^2(\domain;\mathbb{R}^d)$, let $v\in \dot H^1_0(\domain)$ denote the unique weak solution of 
\begin{equation}
\label{EquationV}
\begin{aligned}
-\nabla \cdot (a^\transpose(\tfrac{\cdot}{\eps}) \nabla v ) &= \nabla \cdot g + f \quad &&\text{in }\domain,\\
v &= 0 \quad &&\text{on }\partial\domain,
\end{aligned}
\end{equation}
and introduce the weight
\begin{equation}
\label{omega}
 \omega_{\alpha,R}(x) := \Big( \frac{|x-x_0|}{R} + 1\Big)^\alpha.
 \end{equation}
Then, in the case that $f=0$, there exists $\bar p(\lambda, d, \domain) > 1$ such that, for $1 \le p < \bar p$ and $0 \le \alpha_0 < \alpha_1 < d(2p-1)$, the estimate
\begin{align}
\label{Lemma_4_est_1}
\begin{split}
&\Big( \int_{\domain}  |\nabla v|^{2p} \omega_{\alpha_0,R}  \dx \Big)^{\frac{1}{2p}}  \lesssim_{d,\lambda,p, \domain,\alpha_0, \alpha_1} 
\Big( \int_{\domain} |g|^{2p}  \omega_{\alpha_1,R}  \dx \Big)^{\frac{1}{2p}} 
\end{split}
\end{align}
holds. Likewise, in the case that $g = 0$, there exists $\bar p(\lambda, d, \domain) > 1$ such that, for $1 \le p < \bar p$ and $0 \le \alpha_0 < \alpha_1 -2p <\alpha_1 < d(2p-1)$, the estimate
\begin{align}
\label{Lemma_4_est_2}
\begin{split}
&\Big( \int_{\domain} |\nabla v|^{2p}  \omega_{\alpha_0,R}  \dx \Big)^{\frac{1}{2p}} \lesssim_{d,\lambda,p, \domain,\alpha_0, \alpha_1}
R \Big( \int_{\domain} | f|^{2p}  \omega_{\alpha_1,R}  \dx \Big)^{\frac{1}{2p}}
\end{split}
\end{align}
holds. 
\end{proposition}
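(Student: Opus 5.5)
Following the Gloria--Neukamm--Otto strategy \cite[Step 3 of Proposition 3]{GNO_reg_3}, we decompose $\domain$ into the ball $D_0:=\domain\cap B_R(x_0)$ and the dyadic annuli $D_k:=\domain\cap(B_{2^kR}(x_0)\setminus B_{2^{k-1}R}(x_0))$, $k\ge1$. On $D_k$ the weight $\omega_{\alpha,R}$ is comparable to $2^{k\alpha}$. By linearity we split $g=\sum_k g\chi_{D_k}=:\sum_k g_k$ and let $v_k$ solve \eqref{EquationV} with right-hand side $\nabla\cdot g_k$. The plan is to bound $\int_{D_j}|\nabla v_k|^{2p}$ for each pair $(j,k)$ and then sum with the weights, using the gap $\alpha_1>\alpha_0$ and the restriction $\alpha_1<d(2p-1)$ to make the resulting geometric series converge. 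The case $g=0$ is handled the same way, via the duality/Poincar\'e (Hardy-type) estimate $\|\nabla v_k\|_{L^2}\lesssim 2^kR\|f_k\|_{L^2}$ in place of $\|g_k\|_{L^2}$. This produces the factor $R$ and costs $2p$ powers of the weight, which is why we require $\alpha_0<\alpha_1-2p$.

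\textbf{Step 1: unweighted Meyers.} Gehring's lemma applied to Caccioppoli inequalities, both interior and at the boundary, gives a global estimate $\|\nabla v\|_{L^{2p}(\domain)}\lesssim\|g\|_{L^{2p}(\domain)}$ for $p<\bar p(\lambda,d,\domain)$. Using the $C^{1,1}$ (hence Lipschitz) boundary, together with the uniform boundary Caccioppoli/Sobolev inequality valid below scale $c(\domain)$ and its bounded-domain analogue, we obtain the local reverse-H\"older form
\begin{equation*}
\Big(\fint_{\domain\cap B_\rho(y)}|\nabla v|^{2p}\Big)^{\frac1{2p}}\lesssim\Big(\fint_{\domain\cap B_{2\rho}(y)}|\nabla v|^2\Big)^{\frac12}+\Big(\fint_{\domain\cap B_{2\rho}(y)}|g|^{2p}\Big)^{\frac1{2p}} .
\end{equation*}

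\textbf{Step 2: decay away from the support.} This is the main obstacle. Take $j\ne k$. Then $v_k$ is $(a^\eps)^*$-harmonic with zero Dirichlet data in $\domain\cap B_{2^{k-2}R}(x_0)$ when $j<k$, and outside $B_{2^{k+1}R}(x_0)$ when $j>k$.

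For $j<k$ we apply the large-scale mean value property \eqref{large_scale_mvp_domain} for the adjoint coefficient, which is admissible because $R\ge r^*_\domain((a^\eps)^*,x_0)$. Combined with Step 1 on the outer scale, it gives
\begin{equation*}
\int_{D_j}|\nabla v_k|^{2p}\lesssim (2^jR)^d\Big(\fint_{\domain\cap B_{2^{k-2}R}}|\nabla v_k|^2\Big)^{p}\lesssim 2^{(j-k)d}\int|g_k|^{2p} .
\end{equation*}
Here we use H\"older, $\int|\nabla v_k|^2\lesssim\int|g_k|^2\le|D_k|^{1-1/p}\|g_k\|_{2p}^2$, and the small-scale reverse H\"older inequality on each ball of size $\sim 2^jR$. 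For $j=0$ the ball $B_R$ itself is covered by the same argument.

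For $j>k$ we need decay of $\nabla v_k$ at infinity, away from its source. Here I would use a duality argument: test against the dual solution of the equation with coefficient $a^\eps$ and data supported in $D_j$. Applying the mean value property of Theorem \ref{large_scale_reg_domain} for $a^\eps$ on the inner ball, centered at $x_0$ with radius $2^kR$, should yield
\begin{equation*}
\Big(\int_{D_j}|\nabla v_k|^2\Big)^{\frac12}\lesssim 2^{-(j-k)d/2}\Big(\int|g_k|^2\Big)^{\frac12} .
\end{equation*}
This is plausibly a little weaker than the true decay, but the form of the constraint $\alpha_1<d(2p-1)$ suggests exactly this exponent bookkeeping after passing to $L^{2p}$ with H\"older. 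If the mean value property only holds for $r<c(\domain)$, then on bounded domains the annuli are finite in number and the truncation is absorbed into the $\domain$-dependence of the constant. In the half-space, $c=\infty$ and everything is clean.

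\textbf{Step 3: summation.} We combine the pieces as
\begin{equation*}
\Big(\int|\nabla v|^{2p}\omega_{\alpha_0}\Big)^{\frac1{2p}}\le\sum_k\Big(\sum_j2^{j\alpha_0}\int_{D_j}|\nabla v_k|^{2p}\Big)^{\frac1{2p}} .
\end{equation*}
Inserting the bounds of Step 2 gives a factor $2^{k\alpha_0}\cdot(\text{geometric in } |j-k|)$ times $\|g_k\|_{2p}^{2p}$. The remaining sum over $k$ converts $2^{k\alpha_0/2p}$ into $2^{k\alpha_1/2p}$ by H\"older in $k$, with summable excess $2^{-k(\alpha_1-\alpha_0)/2p}$, yielding \eqref{Lemma_4_est_1}. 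The condition $\alpha_1<d(2p-1)$ ensures that the $j<k$ geometric series, which carries the weight $2^{j\alpha_0}$ against the factor $2^{(j-k)d}$ and the volume factors coming from H\"older, is dominated by its $j\approx k$ term. Estimate \eqref{Lemma_4_est_2} follows by running the same bookkeeping with the extra factor $2^kR$ from the $f$-version of the energy estimate.
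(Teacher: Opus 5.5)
Your treatment of the case $f=0$ is sound and is essentially the paper's argument. Both use dyadic annuli around $x_0$ and a local Meyers estimate on each annulus. Both use the two-sided interaction bound $\int_{D_j}|\nabla v_k|^2\lesssim 2^{-|j-k|d}\int|g_k|^2$. This bound comes from the mean-value property \eqref{large_scale_mvp_domain} for $(a^\eps)^*$ when $j<k$, and from duality with the mean-value property for $a^\eps$ when $j>k$; this is the paper's Step 1. The far-field rate here is the sharp dipole rate, not a weaker one.

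The only organizational difference is the order of operations. You split $g$ before applying Meyers and sum over source annuli with the triangle inequality, which is where the strict gap $\alpha_0<\alpha_1$ is used. The paper applies Meyers to $v$ itself and sums the $L^2$ interactions with a H\"older inequality in $k$.

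There are two small slips. First, the series that needs $\alpha_0<d(2p-1)$ is the far-field one, $j>k$. There, reverse H\"older on $D_j$ and H\"older on $D_k$ give $\int_{D_j}|\nabla v_k|^{2p}\lesssim 2^{-(j-k)d(2p-1)}\int|g_k|^{2p}$, against the growing weight $2^{j\alpha_0}$. The $j<k$ series converges for every $\alpha_0\ge 0$. Second, on bounded domains the point is not that there are finitely many annuli (there are about $\log_2(\diam(\domain)/R)$ of them). What matters is that only $O_\domain(1)$ of them lie above the scale $c(\domain)$, so they can be lumped into one outer region, as the paper does with $A_J$.

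The genuine gap is the case $g=0$, which is not handled ``the same way''. Replacing $\|g_k\|_{L^2}$ by $2^kR\|f_k\|_{L^2}$ is legitimate for the diagonal and near-field terms, since Sobolev with $d\ge 3$ gives $\|\nabla v_k\|_{L^2}\lesssim 2^kR\|f_k\|_{L^2}$. For $j>k$, however, the duality pairing is $\int f_k\,w$, involving the dual solution $w$ itself rather than $\nabla w$. The mean-value property only controls $\nabla w$ on $\domain\cap B_{2^kR}(x_0)$. Passing from $\nabla w$ to $w$ would need an unanchored Poincar\'e inequality, which fails when $\dist(x_0,\partial\domain)\gg 2^kR$.

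The bound you would obtain is in fact false. Take $a=\mathrm{Id}$ and $x_0$ deep inside $\domain$. Then $v_k$ is essentially the Newtonian potential of $f_k$, so $|\nabla v_k|\sim|x-x_0|^{1-d}\|f_k\|_{L^1}$ on $D_j$. For, say, nonnegative $f_k$ of roughly constant size on $D_k$, this gives $\int_{D_j}|\nabla v_k|^2\sim(2^kR)^2\,2^{-(j-k)(d-2)}\|f_k\|_{L^2}^2$. This is monopole decay, not $2^{-(j-k)d}$.

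You need a separate argument for this far-field bound. One option is the paper's: rewrite $f=\nabla\cdot(-\nabla V)$ with $-\Delta V=f$ under Neumann conditions, and combine the divergence-form estimate with the decay of the Laplace Green's function. On bounded domains this requires first subtracting a multiple of a bump supported in the outermost annulus.

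The other option is direct: bound $\|w\|_{L^2(D_k)}$ by telescoping the averages of the zero extension of $w$ over $B_{2^lR}(x_0)$, $k\le l<j$. Use Poincar\'e and the mean-value property on each scale, and the Sobolev bound $\|w\|_{L^{2^*}}\lesssim\|\nabla w\|_{L^2}\lesssim 1$ at scale $2^jR$. This yields $\|w\|_{L^2(D_k)}\lesssim(2^kR)^{d/2}(2^jR)^{1-d/2}$.

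With the correct rate, the far-field series in $L^{2p}$ becomes $\sum_{j>k}2^{j\alpha_0}2^{-(j-k)(d(2p-1)-2p)}$, which converges if and only if $\alpha_0<d(2p-1)-2p$. This condition follows from $\alpha_0<\alpha_1-2p$ together with $\alpha_1<d(2p-1)$. It is the real reason the hypotheses of \eqref{Lemma_4_est_2} are arranged as they are, beyond the loss of $2p$ powers of the weight.
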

\noindent
Proposition~\ref{sens_Lm0} is an important ingredient for the proof of the fluctuation estimates for the boundary layer corrector in the companion paper \cite[Proposition~7]{BFJR_2}.

The proof of Proposition \ref{sens_Lm0}, which is contained in Section~\ref{Meyers}, is quite similar to the proof of analogous results on the whole-space. It, in particular, depends on the use of the classical Meyers estimate on the contributions of a dyadic decomposition of $\domain$ in combination with large-scale regularity results for the random elliptic operator. 

\subsection{Regularity for $a^\eps$-harmonic functions on convex polytopes}

For the last result of this paper, we leave the setting of regular domains and consider instead the situation in convex polytopes.
The desire for a $C^{1,\alpha}$ regularity theory also in this setting is quite natural --in particular, in practice material samples are not always smooth and often have corners and edges. Furthermore, convex polytopes make a natural appearance in numerical homogenization:
Representative volume elements (RVEs) for determining the effective properties of random media are often chosen as cubes; numerical homogenization approaches such as the local orthogonal decomposition method (LOD method) by M\aa{}lqvist and Peterseim \cite{MalqvistPeterseim} also often utilizes cubes as domains of definition of the local basis functions. We refer to \cite{FischerGallistlPeterseim,HauckMohrPeterseim} for the analysis of the LOD method in the context of stochastic homogenization.

The issue separating the situation in a smooth domain from that in a general polytope is a lack of regularity in the homogenized picture. In particular, in domains with corners and edges it is standard practice to decompose a harmonic function in terms of a regular contribution and singular corner and edge contributions --an excess-decay only holding for the regular contribution (see, \eg, \cite{Dauge_book, Dauge_paper_1, Grisvard_book}). In this paper, using the explicit scaling of the corner and edge contributions (which depends on the opening angles), we first observe that in the convex situation an excess-decay in fact holds for the full harmonic function --this is in contrast, \eg, to the situation encountered in \cite{JosienRaithelSchaeffner} in which general opening angles ($<2\pi$) are considered. The presence of regularity in the homogenized setting then makes it possible to transfer regularity to $a^\eps$-harmonic functions at large scales. 
Imposing in addition our assumption (A4) to ensure regularity on small scales, we obtain the following pointwise estimate.
\begin{theorem}
\label{PropositionExcessDecayConvex} Adopt the assumptions (A1)-(A4), let $0<\eps\leq 1$, and let $\domain \subseteq \mathbb{R}^d$ be a convex polytope. Denote by $E$ the set of $(d-2)$-dimensional hyperedges of $\partial \domain$.

Then there exists a constant $\delta=\delta(\lambda,\domain)$ and a random field $r^*_{\domain}(a^\eps, x)\geq \eps$ such that $\frac{r^*_{\domain}(a^\eps, x)}{\eps}$ has stretched exponential moments (in the sense of \eqref{stretched}, uniformly in $\eps$ and $x$) and such that the following holds true: Given any $x_0\in \domain$, any $\rho$ with $r^*_{\domain}(a^\eps, x_0)\leq \rho\leq c(\domain)$, and any weak solution $u\in H^1(B_\rho(x_0))$ to the scalar elliptic PDE
\begin{equation*}
\begin{aligned}
-\nabla \cdot (a^\eps\nabla u) &= 0
&&\text{in }\domain\cap B_\rho(x_0),
\\
u&=0
&&\text{on }\partial\domain\cap B_\rho(x_0),
\end{aligned}
\end{equation*}
the estimate
\begin{align}
\label{BoundAHarmonicFirstDerivative}
|\nabla u(x_0)| \leq C \Big(\frac{\dist(x_0,E)}{\rho}\Big)^\delta \Big(\fint_{\domain\cap B_{\rho}(x_0)} |\nabla u|^2 \dx\Big)^{1/2}
\end{align}
holds.
\end{theorem}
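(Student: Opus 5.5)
The plan follows the three-step pattern of Theorem~\ref{large_scale_reg_domain} and Corollary~\ref{cor_Reg06}: a deterministic excess decay for the homogenized operator in convex cones, transferred to $a^\eps$ at scales $\geq r^*_\domain$, and then closed by Schauder theory on scale $\eps$ via (A4).

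\textbf{Step 1: constant-coefficient estimate.} For $\bar a$-harmonic $\bar u$ in a convex polytope with zero Dirichlet data, we show the following decay around any point $y\in\overline{\domain}$:
\begin{align*}
\Big(\fint_{\domain\cap B_r(y)}|\nabla \bar u|^2\Big)^{1/2}\lesssim \Big(\frac{r}{R}\Big)^{\delta}\Big(\fint_{\domain\cap B_R(y)}|\nabla \bar u|^2\Big)^{1/2},
\end{align*}
valid whenever the local tangent cone at $y$ on scale $R$ contains an edge or vertex. After the linear change of variables $\bar a^{1/2}$, this is Laplace's equation in a convex cone $K$ (convexity is preserved). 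Expanding $\bar u$ in eigenfunctions of the spherical Dirichlet Laplacian on $K\cap S^{d-1}$ gives homogeneous solutions $r^{\mu_k}\psi_k$. Convexity gives $\mu_1\geq 1$, so the excess in the sense of $\inf_A$ decays with some exponent. For an edge with opening $\omega<\pi$ one has $\mu_1=\pi/\omega>1$, and $\delta=\min(\pi/\omega-1,\alpha_0)$ gives true decay of $\nabla\bar u$ itself. For faces (flat pieces), $\nabla\bar u$ does not decay; there we only get excess decay toward $A\nabla x_1$. Near a vertex, the eigenvalue $\mu_1$ of a convex polyhedral cone that is not a wedge or half-space exceeds $1$ strictly, by strict monotonicity of the spherical eigenvalue under domain inclusion. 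The cases are organized by a covering with finitely many cone types, so $\delta$ depends only on $\lambda$ and $\domain$.

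\textbf{Step 2: transfer to $a^\eps$ at large scales.} This proceeds exactly as in Section~\ref{operators_on_domain}. On each scale $r$, we compare $u$ with the $\bar a$-harmonic $\bar u$ having the same boundary values on $\partial(B_r\cap\domain)$. We control $u-u^\eps_{\twoscale,\partial}$ through \eqref{homogenized_error_domain} using the sublinearity of $(\phi,\sigma)$ from \cite{GNO_final} and the averaged boundary-layer estimate Lemma~\ref{Prop_1_aux}. Near an edge it is simpler to avoid $\theta$ entirely: because the limiting profile has vanishing gradient, we compare directly using a cutoff of $\phi^\eps\nabla\bar u$ near $\partial\domain$. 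The resulting error is bounded by the Caccioppoli energy of $\bar u$ in a boundary strip of width $\eps^{\beta}r^{1-\beta}$. This in turn needs $\nabla\bar u\in L^{2+}$ near the boundary, which follows from convexity ($H^2$ regularity of convex domains).

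The outcome is a Campanato iteration $E(\theta r)\le (\theta^{2\delta}+C(r^*/r)^{\gamma})E(r)$, which yields
\begin{align*}
\Big(\fint_{\domain\cap B_r(x_0)}|\nabla u|^2\Big)^{1/2}\lesssim \Big(\frac{\max(r,\dist(x_0,E))}{\rho}\Big)^{\delta}\Big(\fint_{\domain\cap B_\rho(x_0)}|\nabla u|^2\Big)^{1/2}
\end{align*}
for $r^*\le r\le\rho$. Below the scale $\dist(x_0,E)$, only a flat face (or no boundary at all) is visible. There we switch to Theorem~\ref{large_scale_reg_domain}, or its half-space version, and to the interior mean-value property.

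\textbf{Step 3: small scales.} Under (A4), the Schauder estimate on scale $\eps$ (the same step used for Corollary~\ref{cor_Reg06}) bounds $|\nabla u(x_0)|$ by the energy average on $B_{r^*}(x_0)$. The constant from this step and the factor $(r^*/\eps)^{d/2}$ are absorbed into the random field. This is most cleanly stated by letting $C$ be random.

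\textbf{Main obstacle.} The hardest part is Step 2 near edges and vertices. There $\phi^\eps$ does not vanish on $\partial\domain$, and no boundary-layer corrector estimate is available for nonsmooth domains. I would handle this by proving only decay of the gradient, not $C^{1,\alpha}$, since the limiting slope is zero near singular points. With zero limiting slope the corrector enters only through a cutoff term, and its smallness comes from the strip-energy bound above.
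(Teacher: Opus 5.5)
Your proposal is correct and follows the paper's proof. The homogenized decay in convex cones comes from comparing the first spherical Dirichlet eigenvalue with that of the half-sphere (Proposition~\ref{PropositionExcessDecayConvex_constant}). The transfer to $a^\eps$ is a one-step contraction obtained by cutting the corrector off near all of $\partial(B_R\cap\domain)$; no boundary-layer corrector is needed precisely because the target is decay of $\nabla u$ rather than an excess, and the strip error is paid for by higher integrability of $\nabla\bar u$. The pointwise bound then follows from the chaining over faces, edges and vertices in the proof of Corollary~\ref{CorollaryConstantCoefficientRegularityConvex}, plus Schauder theory. The differences are cosmetic: the paper uses a strip of fixed relative width $\rho/R$ and the Meyers estimate instead of an $\eps$-dependent width and $H^2$ regularity. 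As in the paper, getting the factor $\dist(x_0,E)^\delta$ when $\dist(x_0,E)<r^*_\domain$ also requires running the cone decay at small scales with frozen H\"older coefficients, not just the plain Schauder bound on $B_{r^*}(x_0)$ your Step~3 describes.
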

\noindent
In other words, the theorem states that the gradient of any $a^\eps$-harmonic function must decay whenever we approach a $(d-2)$-dimensional hyperedge of the polytope. The underlying reason is that in the homogenized picture, at a $(d-2)$-dimensional hyperedge of the polytope there is no nonzero linear polynomial satisfying the homogeneous Dirichlet boundary conditions in a neighborhood of the hyperedge, nor is there a singular harmonic function with such homogeneous Dirichlet data. Note that, in contrast, near any point on a face of the polytope the distance function $\dist(\cdot,\partial\domain)|_\domain$ is a linear polynomial with Dirichlet boundary conditions, which is why we do not expect a gradient decay near points on the faces of the polytope.

\section{Argument for Proposition \ref{intermediate_lemma}: A suboptimal decay estimate for the boundary layer}
\label{Prop_1_proof}

The main step towards proving Proposition \ref{intermediate_lemma} is the following lemma: 

\begin{lemma}
\label{Prop_1_aux}  Under the assumptions of Proposition \ref{intermediate_lemma}, there exist random fields $\C(a,x)$ and $\tilde {\mathcal{C}}(a,x)$ with stretched exponential moments (in the sense of \eqref{stretched}) such that 
\begin{align}
\label{lemma_3_eqn}
\fint_{B_r(x_0) \cap \domain} |\eps \nabla \theta^{\eps}|^2 \dx \leq \C(a,x_0) \Big(1+\frac{r}{\varepsilon}\Big)^{-\frac{2}{3}}
\end{align}
holds for any $x_0 \in \partial \domain$ and any $r$ with $c(\domain) > r \geq \tilde {\mathcal{C}}(a, x_0) \varepsilon$.

If $\domain = \Hd$, then $\C(a,x)$ and $\tilde{C}(a, x)$ are stationary \wrt shifts tangential to $\partial \Hd$ and $c(\domain)=\infty$. 
\end{lemma}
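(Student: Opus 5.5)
We argue by a Campanato-type iteration in which, on each scale, $\eps\theta^\eps$ is compared with an $a^\eps$-harmonic function that vanishes on $\partial\domain$. By rescaling we may take $\eps=1$ in the whole-space picture and work on scales $r\gg 1$; the target bound then reads $\fint_{B_r(x_0)\cap\domain}|\nabla\theta|^2\lesssim \C\, r^{-2/3}$. The starting point is the following splitting: on $B_R(x_0)\cap\domain$, with $x_0\in\partial\domain$, write $\eps\theta^\eps = \eta w + v$.

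\begin{itemize}
\item[(i)] The first piece is $\eta w$, where $w:=\phi^\eps$ is the rescaled whole-space corrector and $\eta$ is a cutoff of the boundary layer of width $\rho\le R$ near $\partial\domain$. It carries the boundary data $\phi^\eps$ on $\partial\domain$.
\item[(ii)] The remainder $v$ satisfies $-\nabla\cdot(a^\eps\nabla v)=\nabla\cdot(a^\eps\nabla(\eta\phi^\eps))$ with $v=0$ on $\partial\domain\cap B_R$.
\end{itemize}

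The stochastic input is taken from \cite{GNO_final,GNO5}: for $d\ge3$ the corrector $\phi$ (and $\sigma$) is bounded, $|\phi(x)|\le\C(x)$ with stretched exponential moments, and averages of $|\nabla\phi|^2$ are $O(1)$. Hence $\eta\phi^\eps$ has energy density of order $\eps^2/\rho^2$ in the strip of width $\rho$, plus $|\nabla\phi|^2$ there. Dividing by $R^d$ and using that the strip has volume fraction $\rho/R$ in $B_R\cap\domain$, we get
\[
\fint_{B_R\cap\domain}|\nabla(\eta\phi^\eps)|^2\lesssim \C\,\frac{\rho}{R}\,\Big(1+\frac{\eps^2}{\rho^2}\Big).
\]
Taking $\rho=\eps$ gives the energy of this piece as $\lesssim\C\,\eps/R$.

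The key deterministic step is to control the $a^\eps$-harmonic part with zero Dirichlet data. For this we use a large-scale $C^{0,1}$ (or H\"older) estimate up to the boundary, as in \cite{ArmstrongMourrat}, or a direct homogenization comparison: on $B_R\cap\domain$, approximate the harmonic part by its $\bar a$-harmonic counterpart with zero data. That counterpart is $C^{1,1}$ up to the $C^{1,1}$ boundary, so it is excess-decaying. Its gradient, however, need not decay, since the distance function is admissible, so on its own this gives only the mean value property. The decay must come from an energy identity instead. Testing the equation for $\eps\theta^\eps-\phi^\eps\eta$ gives a Caccioppoli inequality, and the energy of $\eps\theta^\eps$ in $B_r\cap\domain$ is controlled by the boundary data, $\int|\nabla(\eta\phi^\eps)|^2$, plus the oscillation of $\eps\theta^\eps$ itself. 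Since $\eps\theta^\eps$ is bounded by $\C\eps$ via the maximum principle and the bound on $\phi$, Caccioppoli on $B_{2r}$ gives
\[
\fint_{B_r\cap\domain}|\eps\nabla\theta^\eps|^2\lesssim \frac{\eps^2\C^2}{r^2}+\C\frac{\eps}{r}.
\]
The maximum principle requires care on unbounded $\Hd$, and there a truncation together with the sublinearity condition is used.

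This would already give $r^{-1}$ decay. The stated $-\tfrac23$ exponent suggests that the honest argument cannot use a pointwise corrector bound uniformly along the whole boundary, because $\sup_{\partial\domain\cap B_r}|\phi|$ grows logarithmically in $r$, and that the curvature correction of the $C^{1,1}$ boundary must also be absorbed. The plan is therefore to balance three terms:
\begin{itemize}
\item[(a)] the strip energy $\rho/R$, with $\rho$ a free layer width;
\item[(b)] the homogenization error of the comparison on $B_R$, which is of order $(\eps/\rho)^{2}$ times the relevant energy, due to the gradient of the cutoff;
\item[(c)] the excess decay of the $\bar a$-harmonic comparison function.
\end{itemize}
Choosing $\rho\sim\eps^{1/3}R^{2/3}$ should produce the rate $(\eps/R)^{2/3}$.

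The minimal radius $\tilde\C\eps$ is where the random constants in the corrector sublinearity first become effective; it is defined as the smallest scale beyond which $\frac1r\sup_{B_r}|\phi^\eps|$ and $\frac1r\sup_{B_r}|\sigma^\eps|$ are small. Its moments are inherited from \cite{GNO_final}, and in the case $\Hd$ stationarity along $\partial\Hd$ is inherited from that of $\phi$ and $\sigma$.

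The main obstacle is the iteration step, namely making the error term (b) quantitative near a curved boundary, where the two-scale expansion fails to vanish on $\partial\domain$. To handle it I would first flatten the boundary locally, which incurs $O(r/c(\domain))$ errors, and then use \eqref{homogenized_error_domain}-type identities with the cutoff boundary layer in place of $\theta$.
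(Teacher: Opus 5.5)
Your argument rests on the bound $|\eps\theta^\eps|\le \C\eps$, obtained ``via the maximum principle and the bound on $\phi$''. This step fails.

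\begin{itemize}
\item The maximum principle only gives $\sup_\domain|\eps\theta^\eps|\le\sup_{\partial\domain}|\phi^\eps|$, a supremum over the \emph{entire} boundary. On $\Hd$ this is generically infinite. On a bounded domain it runs over $\sim\eps^{1-d}$ unit cells and generically grows as $\eps\to0$. Either way it is not a random variable at $x_0$ with stretched exponential moments uniform in $\eps$. The maximum principle is also unavailable for systems, which the paper's proof is meant to cover.
\item Truncation does not rescue it. Any localized bound on $\sup_{B_r\cap\domain}|\eps\theta^\eps|$ needs $L^2$-control of $\eps\theta^\eps$ on a larger ball, which is what you are trying to prove.
\end{itemize}

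You sense this yourself, but the fallback is not an argument.

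\begin{itemize}
\item Your width $\rho$ conflates two different cutoffs. One is the lift of the boundary data, which should have width $\eps$; widening it only increases $\tfrac{\rho}{R}(1+\eps^2/\rho^2)$. The other localizes a two-scale expansion away from the round part of $\partial B_R$.
\item The numerology does not close. With $\rho=\eps^{1/3}R^{2/3}$, term (a) equals $(\eps/R)^{1/3}$; balancing (a) against (b) would need $\rho=\eps^{2/3}R^{1/3}$.
\item Most importantly, you never explain how the $\bar a$-comparison yields more than the mean-value property, which you concede is all it gives.
\item The boundary Lipschitz estimate you cite is for $a^\eps$-harmonic functions with regular data. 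Your $v$ instead carries the right-hand side $\nabla\cdot(a^\eps\nabla(\eta\phi^\eps))$, concentrated on $\partial\domain$ at every scale.
\end{itemize}

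The missing idea is that a mean-value-type estimate is \emph{enough}, provided it is paired with an initialization at the top scale and run as a downward induction.

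\emph{Initialization.} For bounded $\domain$, the global energy estimate for $\eps\theta^\eps_i-\zeta\phi^\eps_i$, with $\zeta$ of width $\eps$, gives $\int_\domain|\eps\nabla\theta^\eps_i|^2\dx\le\C\eps$. Your lift-energy computation $\C\eps/R$ is exactly the additive input here. On $\Hd$ this global energy is infinite. There one combines the massive approximation (Lemma~\ref{exp_2}) with tangential stationarity, Markov's inequality and Borel--Cantelli over dyadic radii to obtain \eqref{induction_start} at large scales.

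\emph{One-step estimate \eqref{Step_2_Lemma_4}.} In flattened coordinates, compare $\eps\tilde\theta_i-\tilde\zeta\tilde\phi_i$ with the $\hat{\bar a}(0)$-harmonic $v$. Expand with the \emph{Dirichlet} corrector $\phi_j-\eps\theta_j$, so that the leading term $\partial_1 v(0)(y_1+\tilde\phi_1-\eps\tilde\theta_1)$ vanishes on the flat boundary. The error is controlled by the Hardy-weighted energy estimate of Lemma~\ref{Hardy} together with Lemma~\ref{constant_coeff_reg}. Because the expansion refers to $\theta$ itself, it produces a quadratic term $C^*\bigl(\fint_{B^+_R}|\eps\nabla\tilde\theta|^2\bigr)^2$.

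\emph{Iteration.} One propagates $4\C(\eps/r)^{2/3}$ downward from the initial scale. This works because $\tfrac12\tau^{-2/3}$ dominates the mean-value constant for small $\tau$. The quadratic term is admissible only for $r\gtrsim(16C^*\C)^{3/2}\eps$, and it is this condition, not a sublinearity scale of $\phi$ and $\sigma$, that fixes $\tilde\C$.

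So the exponent $2/3$ does not come from balancing a strip width. It is a convenient exponent strictly below $1$; being below $1$ is needed so that the failure probabilities $(\eps/r)^{1/3}$ in the Borel--Cantelli step are summable.
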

\noindent As already mentioned, to obtain \eqref{lemma_3_eqn} we morally ``transfer the regularity'' from a corresponding homogenized solution. 


The argument for Lemma \ref{Prop_1_aux} requires a couple of auxiliary lemmas. The first auxiliary lemma, which is proven in Section \ref{Prop_1_auxil}, is a weighted energy estimate that is a consequence of the classical Hardy inequality. 
\begin{lemma}
\label{Hardy}
Let the assumption (A1) hold and let $x_0 \in \partial \mathbb{H}^d_+$. Then there exists $\kappa = \kappa(d,\lambda) > 0$ such that, for any $r>0$ and for any weak solution $u \in H^1(B^+_r(x_0))$ of 
\begin{equation*}
\begin{aligned}
-\nabla \cdot (a\nabla u) &= \nabla \cdot g + f&&\text{in } B^+_r(x_0), \\
u&=0&&\text{on } \partial B_r^+(x_0),
\end{aligned}
\end{equation*}
there $\langle \cdot \rangle$-\as holds 
\begin{align}
\label{weighted_energy}
 \int_{B^+_r(x_0)} \Big(1-\frac{|x - x_0|}{r}\Big)^\kappa |\nabla u|^2 \dx \lesssim_{d,\lambda, \kappa} \int_{B^+_r(x_0)} \Big(1-\frac{|x-x_0|}{r}\Big)^\kappa (|g|^2+r^2f^2) \dx.
\end{align}
\end{lemma}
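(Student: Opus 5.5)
The plan is to test the equation with $\eta u$, where $\eta(x) := (1-\frac{|x-x_0|}{r})^\kappa$. The commutator terms this produces will be controlled by a weighted Hardy inequality with respect to the distance to the sphere $\partial B_r(x_0)$. Taking $\kappa$ small then lets us absorb them.

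\emph{Setup.} Since $u=0$ on all of $\partial B_r^+(x_0)$, both on the flat part and on the spherical part, we extend $u$ by zero to $B_r(x_0)$ and obtain a function in $H^1_0(B_r(x_0))$. Write $\delta(x) := r-|x-x_0|$ for the distance to the sphere, so that $\eta = (\delta/r)^\kappa$ and
\begin{equation*}
|\nabla \eta| = \frac{\kappa}{r}\Big(\frac{\delta}{r}\Big)^{\kappa-1} = \frac{\kappa\,\eta}{\delta}.
\end{equation*}
The function $\eta u$ is an admissible test function: $\eta$ is bounded, $u\nabla\eta$ lies in $L^2$ by the Hardy bound below, and a truncation argument makes this rigorous.

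\emph{Step 1: weighted Hardy inequality.} We show that for $0\le\kappa<1$,
\begin{equation*}
\int_{B_r} \eta\, \frac{u^2}{\delta^2}\dx \le \Big(\frac{2}{1-\kappa}\Big)^2 \int_{B_r}\eta|\nabla u|^2\dx .
\end{equation*}
Pass to polar coordinates about $x_0$ with $t=\delta=r-s$ and Jacobian $w(t)=s^{d-1}$, which is nonincreasing in $t$. For each direction we integrate $\int_0^r t^{\kappa-2}v^2 w\,dt$ by parts, using $t^{\kappa-2} = \partial_t t^{\kappa-1}/(\kappa-1)$.
\begin{itemize}
\item The boundary terms vanish: at $t=0$ because $v$ vanishes there (for smooth compactly supported approximations), and at $t=r$ because $w=0$.
\item The term containing $w'$ has a favourable sign, since $w'\le 0$ and $\kappa-1<0$.
\item The remaining term is bounded by $\frac{2}{1-\kappa}\int t^{\kappa-1}|v||v'|w\,dt$.
\end{itemize}
Cauchy--Schwarz then gives the claim, and density in $H^1_0$ extends it to our $u$.

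\emph{Step 2: energy estimate.} Testing with $\eta u$ and using (A1) gives
\begin{equation*}
\lambda\!\int\eta|\nabla u|^2 \le \int |u|\,|\nabla u|\,|\nabla\eta| + \int |g|\big(\eta|\nabla u|+|u||\nabla\eta|\big) + \int|f|\eta|u|.
\end{equation*}
We bound the three contributions as follows.
\begin{itemize}
\item The first term is at most $\kappa\int\eta|\nabla u|\,|u|/\delta$. By Cauchy--Schwarz and Step 1 it is at most $\frac{2\kappa}{1-\kappa}\int\eta|\nabla u|^2$.
\item In the $g$-term, $|u||\nabla\eta| \le \kappa\,\eta|u|/\delta$. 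Young's inequality together with Step 1 bounds it by $\tfrac{\lambda}{4}\int\eta|\nabla u|^2 + C\int\eta|g|^2$.
\item For the $f$-term we use $\delta\le r$, so $|u| \le r|u|/\delta$. Hence $\int|f|\eta|u| \le (\int\eta r^2f^2)^{1/2}(\int\eta u^2/\delta^2)^{1/2}$, and Step 1 and Young bound this by $\tfrac{\lambda}{4}\int\eta|\nabla u|^2 + C\int\eta r^2f^2$.
\end{itemize}
Choosing $\kappa=\kappa(d,\lambda)$ with $\frac{2\kappa}{1-\kappa}\le\frac{\lambda}{4}$ lets us absorb all $\nabla u$ terms on the left, which yields \eqref{weighted_energy}.

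The main obstacle is Step 1. One has to check that the Hardy inequality holds with the weight $\delta^\kappa$ on the whole ball, including near the center where polar coordinates degenerate. The monotonicity of the Jacobian, together with its vanishing at $s=0$, is exactly what lets the integration by parts close without any separate interior estimate.
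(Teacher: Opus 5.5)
Your proof is correct and follows the paper's argument. Both test with $(1-|x-x_0|/r)^\kappa u$, control the commutator and $f$-terms by a one-dimensional Hardy inequality along rays from $x_0$ (vanishing on the spherical part), and absorb for small $\kappa$. The one difference is that you integrate by parts with the full Jacobian $s^{d-1}$, whose monotonicity gives the favourable sign and whose vanishing at the centre removes the boundary term. The paper instead applies Hardy only on the annulus $B_r^+\setminus B_{r/3}^+$ and handles $B_{r/3}^+$ separately with a Poincar\'e inequality, so your version is slightly more direct and gives explicit constants.
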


\smallskip

The second auxiliary lemma gives the constant coefficient regularity result that we ``transfer'' onto the heterogenous solution.
\begin{lemma}
\label{constant_coeff_reg}
Let $x_0 \in \overline{\mathbb{H}}^d_+$. Furthermore, let $\bar{a}  \in \Rd^{d\times d}$ be uniformly elliptic and bounded. Let $v$ be $\bar{a}$-harmonic in $B^+_R(x_0)$ with homogeneous Dirichlet boundary data on $\partial \Hd \cap B_R^+(x_0)$. Then, for any $\rho \in (0,\frac{R}{2}]$, we have that 
\begin{align}
\label{constant_reg_1} 
\sup_{B^+_{R- \rho} (x_0) } (\rho^2 |\nabla^2 v|^2 + |\nabla v|^2) \lesssim_{d, \lambda}  \Big( \frac{R}{\rho}\Big)^d \fint_{B^+_R(x_0) } |\nabla v|^2 \dx.
\end{align}
\end{lemma}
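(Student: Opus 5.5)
Since $v$ has constant coefficients and vanishes on the flat boundary, the plan is to combine odd reflection across $\partial\Hd$ with interior regularity for constant-coefficient operators, and then cover $B^+_{R-\rho}(x_0)$ by balls of radius comparable to $\rho$.

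\emph{Step 1: normalization.} The estimate is invariant under the scaling $x\mapsto x_0 + R x$ together with subtracting nothing (the boundary data are homogeneous), so it suffices to treat $R=1$. Fix $y \in B^+_{1-\rho}(x_0)$. Two cases arise: either $\dist(y,\partial\Hd)\ge \rho/2$, or $y$ lies within $\rho/2$ of $\partial\Hd$.

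\emph{Step 2: interior points.} In the first case $B_{\rho/2}(y)\subset B^+_1(x_0)$. Derivatives of $v$ are again $\bar a$-harmonic, so standard interior estimates for constant-coefficient elliptic equations (Caccioppoli iterated, plus Sobolev embedding, or equivalently the mean-value/Liouville-type bounds) give
\[
|\nabla v(y)|^2+\rho^2|\nabla^2 v(y)|^2 \lesssim \fint_{B_{\rho/2}(y)}|\nabla v|^2 \dx \lesssim \rho^{-d}\int_{B^+_1(x_0)}|\nabla v|^2\dx .
\]
This is exactly the claimed bound with $(R/\rho)^d$.

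\emph{Step 3: boundary points, the main obstacle.} Now let $y$ lie within $\rho/2$ of $\partial\Hd$, and let $y'$ be its projection onto $\partial\Hd$. We have $B_{\rho/2}(y')\cap\Hd\subset B^+_1(x_0)$. For general (non-symmetric, non-diagonal) $\bar a$, odd reflection does not preserve $\bar a$-harmonicity. Two routes are available.
\begin{itemize}
\item[(a)] Apply a linear change of variables that maps $\bar a$ to the identity while preserving the half-space structure. Write $\bar a_{\mathrm{sym}} = L L^T$ with $L$ lower-triangular (Cholesky), so that hyperplanes $\{x_1=\text{const}\}$ are mapped to hyperplanes. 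Note that the antisymmetric part of a constant $\bar a$ does not affect $\nabla\cdot\bar a\nabla$. The image is a half-space, where odd reflection of a harmonic function is harmonic. Distortion constants depend only on $\lambda$, and balls map into comparable ellipsoids.
\item[(b)] Use the standard boundary regularity for constant-coefficient equations on flat boundaries. Tangential derivatives $\partial_j v$, $j\ge2$, still vanish on the boundary and satisfy boundary Caccioppoli estimates. The normal second derivative $\partial_{11}v$ is recovered from the equation as $\partial_{11} v = -\bar a_{11}^{-1}\sum_{(i,j)\neq(1,1)}\bar a_{ij}\partial_{ij}v$. Iterating this yields $H^k$ bounds on half-balls, and Sobolev embedding with $k>d/2+2$ gives the pointwise bound.
\end{itemize}

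\emph{Conclusion.} Either route yields
\[
\sup_{B^+_{\rho/4}(y')}\big(|\nabla v|^2+\rho^2|\nabla^2 v|^2\big)\lesssim \fint_{B^+_{\rho/2}(y')}|\nabla v|^2\dx \lesssim \rho^{-d}\int_{B^+_1(x_0)}|\nabla v|^2 \dx .
\]
Adjusting radii covers $y$. Rescaling back and dividing by $|B^+_R|\sim R^d$ then gives \eqref{constant_reg_1}. I expect the only delicate point to be Step 3, and I would go with route (a), checking that the transformed ball still contains a half-ball of radius comparable to $\rho$ around the image of $y'$.
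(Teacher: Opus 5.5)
Your proposal is correct. Route (b) is essentially the paper's argument, which it defers to \cite[Lemma 4.2]{FischerRaithel}: iterate the boundary Caccioppoli inequality for tangential derivatives, recover $\partial_{11}v$ from the equation, and conclude by Sobolev embedding. Route (a), the one you commit to, is genuinely different.

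In route (a) you use the scalar structure. Only $\bar a_{\mathrm{sym}}$ enters the equation, and $x=Lz$ with $\bar a_{\mathrm{sym}}=LL^T$ turns the operator into the Laplacian. Any invertible linear map sends $\{x_1>0\}$ to a half-space through the origin; the lower-triangular choice just keeps it equal to $\{z_1>0\}$, since $x_1=L_{11}z_1$. Odd reflection plus interior harmonic estimates then give first and second derivatives at once, with constants depending only on $d$ and the ellipticity and boundedness of $\bar a$. This is shorter and avoids the $H^k$ bookkeeping.

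However, route (a) is intrinsically scalar. For constant-coefficient systems there is in general no linear change of variables reducing the operator to the componentwise Laplacian, and odd reflection does not preserve solutions (e.g.\ for Lam\'e-type systems). The tangential-Caccioppoli route only needs a Caccioppoli inequality and invertibility of the block $(\bar a^{\alpha\beta}_{11})_{\alpha\beta}$. This is what lets the paper claim that its results carry over to systems.

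When you write (a) out, note that $L$ distorts distances by up to $\|L\|\,\|L^{-1}\|$. So the threshold separating interior and boundary points has to be $c\rho$ with $c\le \min\{1/2,(4\|L\|\,\|L^{-1}\|)^{-1}\}$ rather than $\rho/2$. With this choice, $B^+_{\rho/2}(y')\subset B^+_1(x_0)$, its image contains the half-ball of radius $\rho/(2\|L\|)$ around $L^{-1}y'$, and $L^{-1}y$ lies in the inner half of the reflected ball. This is exactly the radius adjustment you anticipated, and it only costs $\lambda$-dependent constants.
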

\noindent This lemma follows from using the Sobolev embedding and iterating the Caccoppoli inequality in directions tangential to $\partial \Hd$. To recover the desired estimate in the $e_1$-direction, one uses the equation solved by $v$. The full proof is standard and can e.\,g.\ be found in \cite[Lemma 4.2]{FischerRaithel}, for brevity we do not repeat it here.

We remark that both Lemmas \ref{Hardy} and \ref{constant_coeff_reg} are only stated and proven on $\Hd$. In particular, in the case that $\domain$ is a bounded $C^{1,1}$-domain, these auxiliary results are applied only after we have straightened the boundary --which introduces technical details that must be handled in our argument.

The third auxiliary result which we require in our argument for Lemma \ref{Prop_1_aux} is the following bound on the boundary layer:

\begin{lemma} \label{exp_2}
Under the assumptions of Proposition \ref{intermediate_lemma} with $\domain = \Hd$, $\theta^{\eps}_i$ satisfies
\begin{align}
\label{int_along_rays}
\left \langle \int_0^{\infty} |\eps\nabla \theta^{\eps}_i (x_0 + z)|^2 \textrm{d} z \right \rangle \lesssim \eps
\end{align}
for any $x_0 \in \partial \Hd$. 
\end{lemma}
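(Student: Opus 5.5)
The plan is to reduce by scaling to $\eps=1$ and then run an energy estimate in a Hilbert space of fields that are stationary in the tangential directions. I read $\int_0^\infty(\cdot)(x_0+z)\,\textrm{d}z$ as integration along the inward normal ray $x_0+ze_1$.

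\textbf{Scaling.} Set $\theta_i(y):=\theta^\eps_i(\eps y)$. Then $\theta_i$ solves $-\nabla\cdot(a\nabla\theta_i)=0$ in $\Hd$ with $\theta_i=\phi_i$ on $\partial\Hd$, and $\eps\nabla\theta^\eps_i(x)=\nabla\theta_i(x/\eps)$. The change of variables $z=\eps t$ gives
\begin{equation*}
\int_0^\infty|\eps\nabla\theta^\eps_i(x_0+ze_1)|^2\,\textrm{d}z=\eps\int_0^\infty|\nabla\theta_i(\tfrac{x_0}{\eps}+te_1)|^2\,\textrm{d}t .
\end{equation*}
It therefore suffices to show $\big\langle\int_0^\infty|\nabla\theta_i(y_0+te_1)|^2\,\textrm{d}t\big\rangle\lesssim1$ for $y_0\in\partial\Hd$. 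By (A2), and since the boundary datum $\phi_i$ is a stationary field, I expect $\nabla\theta_i$ to be stationary with respect to shifts parallel to $\partial\Hd$. The left-hand side is then independent of $y_0$, and it equals the expected energy of $\theta_i$ per unit tangential area.

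\textbf{Splitting off the boundary data.} Fix a cutoff $\eta(x_1)$ with $\eta=1$ on $[0,1]$ and $\eta=0$ on $[2,\infty)$, and write $\theta_i=\eta\phi_i+w$. Then $w$ vanishes on $\partial\Hd$ and solves
\begin{equation*}
-\nabla\cdot(a\nabla w)=\nabla\cdot\big(a\nabla(\eta\phi_i)\big)\quad\text{in }\Hd .
\end{equation*}
Both the right-hand side and $\eta\phi_i$ are tangentially stationary with compact support in $x_1$.

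\textbf{Lax--Milgram on the probability space.} Consider the Hilbert space $\mathcal H$ of random fields $w$ on $\Hd$ with the following properties: $w$ is tangentially stationary, $w$ vanishes at $x_1=0$, and
\begin{equation*}
\|w\|_{\mathcal H}^2:=\Big\langle\int_0^\infty|\nabla w(te_1)|^2\,\textrm{d}t\Big\rangle<\infty .
\end{equation*}
Since $w=0$ at $x_1=0$, the one-dimensional Hardy inequality $\int_0^\infty t^{-2}w^2\,\textrm{d}t\le4\int_0^\infty|\partial_1w|^2\,\textrm{d}t$ makes this a genuine norm. Tangential derivatives are handled by stationarity: expectations of tangential divergences vanish. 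Hence the bilinear form $\langle\int_0^\infty a\nabla w\cdot\nabla\zeta\,\textrm{d}t\rangle$ is bounded and coercive by (A1). Lax--Milgram then yields a solution $w\in\mathcal H$ with
\begin{equation*}
\|w\|_{\mathcal H}^2\lesssim\Big\langle\int_0^2|\nabla(\eta\phi_i)|^2\,\textrm{d}t\Big\rangle\lesssim\langle|\nabla\phi_i|^2+\phi_i^2\rangle\lesssim1 .
\end{equation*}
The last step uses the stationary corrector with bounded moments from \cite{GNO_final,GNO5}, which exists for $d\ge3$. Adding back $\eta\phi_i$ gives the claimed bound for $\tilde\theta_i:=\eta\phi_i+w$.

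\textbf{Main obstacle: identifying $\tilde\theta_i$ with $\theta_i$.} One has to check two things.
\begin{itemize}
\item The probabilistic weak formulation implies the pathwise equation $-\nabla\cdot(a\nabla\tilde\theta_i)=0$ almost surely. The standard route is to test with $\zeta$ of product form: a random variable times a function of $x_1$, composed with tangential shifts.
\item $\tilde\theta_i$ satisfies the decay condition $\lim_{r\to\infty}\fint_{B_r}\chi_{\Hd}|\nabla\tilde\theta_i|^2=0$, so that uniqueness in Proposition~\ref{intermediate_lemma} applies. Stationarity and finiteness of $\langle\int_0^\infty|\nabla\tilde\theta_i|^2\rangle$ give $\fint_{B_r^+}|\nabla\tilde\theta_i|^2\lesssim r^{-1}$ in expectation, since the integrable-in-$x_1$ energy is spread over height $r$. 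An ergodic theorem in the tangential variables should upgrade this to an almost-sure statement.
\end{itemize}
If this identification proves delicate, the fallback is to approximate. Replace the datum by a massive or tangentially truncated problem, derive the same energy bound uniformly, and pass to the limit using the uniqueness class.
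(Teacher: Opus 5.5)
Your proposal is correct, but it follows a different route from the paper. The paper constructs nothing on probability space. After scaling to $\eps=1$ it works with the massive approximation $\theta_i^T$ from Appendix~\ref{exp_sec}. It takes the expectation of the pathwise exponentially weighted energy estimate \eqref{exp_weighted_energy_estimate} with datum $g=\zeta\phi_i$. Tangential stationarity turns the weighted integral over $\Hd$ into the ray integral, and removing the weight ($\gamma\to0$) gives \eqref{IntLocalized} uniformly in $T$; finally $T\to\infty$. You instead build the solution directly by Lax--Milgram in the Hilbert space of tangentially stationary fields vanishing on $\partial\Hd$. Hardy's inequality in $x_1$ makes the norm non-degenerate, and you identify the result with $\theta_i$ through the uniqueness class of Proposition~\ref{intermediate_lemma}.

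The paper's route has the merit that the key estimate is pathwise and needs no calculus of stationary derivatives. Yours avoids the massive approximation and the double limit. It also makes explicit an identification that the paper's limit $T\to\infty$ tacitly needs too: one must know that the limit of $\theta_i^T$ is the decaying solution $\theta_i$.

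The two identification steps you flag do go through.
\begin{itemize}
\item For the pathwise equation, test with $\zeta(a,x)=\int_{\mathbb{R}^{d-1}}\xi(a(\cdot+x^\parallel-y))\varphi(x_1,y)\,\dd y$, where $\varphi\in C^\infty_c(\Hd)$ and $\xi$ is bounded. This field lies in $\mathcal H$, and testing shows that $\int_{\Hd} a\nabla\tilde\theta_i\cdot\nabla\varphi\,\dx$ has vanishing correlation with every $\xi$. Hence it vanishes almost surely, and a countable dense family of $\varphi$ finishes the argument.
\item For the decay condition, the field $G(x^\parallel):=\int_0^\infty|\nabla\tilde\theta_i(x^\parallel+te_1)|^2\,\dd t$ is tangentially stationary with finite mean. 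The multiparameter ergodic theorem, or Borel--Cantelli over dyadic radii as in Step~2 of the proof of Lemma~\ref{Prop_1_aux}, then gives $\fint_{B_r^+}|\nabla\tilde\theta_i|^2\lesssim r^{-1}\fint_{\{|x^\parallel|<r\}}G\to0$ almost surely. No tangential ergodicity is needed; finiteness of the limit suffices.
\end{itemize}
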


\smallskip

\noindent We remark that Lemma \ref{exp_2} is needed to initiate the Campanato iteration which is the backbone of the argument for Lemma \ref{Prop_1_aux} in the case that $\domain = \Hd$. Since, given our treatment of the exponentially localized boundary layer $\theta^{T}$ in Appendix \ref{exp_sec}, the proof is very short, we simply give it here:

\smallskip

\begin{proof}
Since we are now only on the half-space, we may set $\eps = 1$. Observe that it suffices to show (with $\theta_i^T$ solving \eqref{defn_theta_T})
\begin{align}
\label{IntLocalized}
\left \langle \int_0^{\infty} |\nabla \theta_i^T (x_0 + z)|^2 + \frac{1}{T} |\theta_i^T(x_0+z)|^2 \dd z \right \rangle \lesssim 1,
\end{align}
then \eqref{int_along_rays} follows by passing to the limit $T\rightarrow \infty$. The equation \eqref{IntLocalized} however is a straightforward consequence of taking the expected value in \eqref{exp_weighted_energy_estimate} for $g=\zeta \phi_i$ --$\zeta$ being a standard cutoff being equal to one on $\partial \Hd$ and zero outside of $\partial \Hd+B_1$. Using stationarity (recall that for simplicity we have assumed $d\geq 3$) as well as the standard whole-space corrector bounds (see Theorem~\ref{CorrectorBoundGNO4} in the appendix, from which $\fint_{B_r(y)} |e_i+\nabla \phi_i|^2 \dx \leq \mathcal{C}(a,y)$ follows by the Caccioppoli inequality), and passing to the limit $\gamma\rightarrow 0$ then yields the claim.
\end{proof}

In order to obtain Proposition \ref{intermediate_lemma}, once we have access to Lemma \ref{Prop_1_aux}, the estimate \eqref{lemma_3_eqn} must still be post-processed  to obtain the pointwise estimate \eqref{intermediate}. As previously indicated, this requires the additional assumption (A4) and then is a simple consequence of standard Schauder theory.

\subsection{Proof of Proposition \ref{intermediate_lemma}}

The main ingredient in the proof of Proposition \ref{intermediate_lemma} is Lemma \ref{Prop_1_aux}, which we prove below. 

\begin{proof}[Proof of Lemma~\ref{Prop_1_aux}] Throughout our argument we use $\mathcal{C}(a, x)$ to denote a generic random field --by tracking the random fields in the proof below, one finds that all of the random fields that appear are polynomial in the random field in the whole-space corrector bounds from Theorem~\ref{CorrectorBoundGNO4}. This, in particular, implies that they all have stretched exponential moments, and, in the case that $\domain = \Hd$, are stationary \wrt to shifts tangential to the boundary. 

Our argument comes in four steps: First, in Step 1, we locally straighten the boundary. Then, in Step 2, we initialize the Campanato iteration. In Step 3, we show the required excess decay --in particular, here we encounter some technicalities associated with straightening the boundary. Finally, in Step 4, we conclude our argument by iterating the excess decay estimate obtained in Step 3.\\

 \noindent Letting $1 \leq i \leq d$, here is the argument:\\
 
 \smallskip

\noindent{\bf Step 1: Straightening the boundary}  \qquad We assume that $c(\domain)$ is chosen small enough so that for all $c(\domain) \geq r >0$ we have that $B_r(x_0) \cap \domain$ is in one chart. We then let $\gamma: y \in \gamma^{-1}(B_r(x_0) \cap \domain) \mapsto x = \gamma(y) \in B_r(x_0) \cap \domain$ be $C^{1,1}$ and satisfy $\gamma(0)=x_0$ as well as $D\gamma(0)e_1=-\vec{n}(x_0)$ (with $\vec{n}$ denoting the exterior unit normal of $\domain$). Without loss of generality and to simplify notation, we may assume $D\gamma(0)=\operatorname{Id}_{d\times d}$ (and therefore in particular $e_1=-\vec{n}(x_0)$). Since $\gamma$ is $C^{1,1}$, for an arbitrary but fixed $\delta>0$ we may furthermore assume that the constant $c(\domain)$ is small enough so that for any $r\leq c(\domain)$ we have $B_{(1-\delta)r} (x_0) \cap \domain \subseteq \gamma (B_r^+) \subseteq B_{(1+\delta)r}(x_0) \cap \domain$.

As already mentioned, we use the notational convention that for any $f: B_r (x_0) \cap \domain \rightarrow \mathbb{R}^d$ we abbreviate $f(\gamma(\cdot))$ as $\tilde{f} (\cdot):= f(\gamma(\cdot))$; furthermore, for $b: B_r (x_0) \cap \domain \rightarrow \mathbb{R}^{d\times d}$, we write 
\begin{align}
\label{flattened_coeff_general}
\begin{split}
\hat{b}_{\mu \nu} (\cdot) := & \,   |\det(D \gamma)| (\cdot) \Big( \frac{\partial \gamma_{\mu}^{-1}}{\partial x_{\alpha}} b_{\alpha \beta}  \frac{\partial \gamma_{\nu}^{-1}}{\partial x_{\beta}} \Big)(\gamma (\cdot)) \\
= & \, |\det(D \gamma)|  \stackon[-8pt]{$ \frac{\partial \gamma_{\mu}^{-1}}{\partial x_{\alpha}} b_{\alpha\beta}  \frac{\partial \gamma_{\nu}^{-1}}{\partial x_{\beta}}$}{\vstretch{1.4}{\hstretch{2}{\widetilde{\phantom{\;\;\;\;\;\;\;\;}}}}}(\cdot).
\end{split}
\end{align}
We remark that $\hat{b}$ is uniformly elliptic and bounded as long as $a$ is such. Using the above conventions, for any $r \leq c(\domain)$, we notice that $\widetilde{\theta}^{\eps}$ solves
\begin{equation}
\label{theta_flattened_boundary}
\begin{aligned}
-\nabla \cdot (\widehat{a^{\eps}} \nabla \widetilde{\theta^{\eps}_i}) &= 0 && \text{in } \quad  B_{(1-\delta)r}^+,\\
\widetilde{\theta^{\eps}_i} & = \widetilde{\phi_i(\cdot / \eps)} && \text{on } \quad \partial \Hd\cap B_{(1-\delta)r}.
\end{aligned}
\end{equation} 

Notice that in the case that $\domain$ is any half-space, $\gamma$ may be taken to be simply a rotation and/or translation.

\bigskip

\noindent {\bf Step 2: Initiation of the Campanato iteration.} \qquad We treat the cases when $\domain = \Hd$ and $\domain$ is a bounded $C^{1,1}$-domain separately.  

\vspace{.2cm}

\noindent \emph{Case of a bounded $C^{1,1}$-domain:} \qquad In this case, we use the energy estimate for the equation satisfied by $\theta^{\eps}_i - \zeta \phi_i(\cdot / \eps)$ on $\domain$, where $\zeta$ is a smooth cut-off of width $\rho$ for the boundary $\partial \domain$. In particular, $\eta = 1$ on $\partial \domain$ and vanishes outside of $\domain_{\rho}:=\domain\cap (\partial \domain +B_\rho)$, which we use to denote the layer of width $\rho$ around $\partial \domain$. Then we have
\begin{align}
\label{initiation_bounded_domain}
\begin{split}
\int_{\domain} |\nabla (\theta^{\eps}_i- \zeta \phi_i(\cdot/ \eps))|^2\dx & \lesssim \int_{\domain_{\rho}} |\nabla (\zeta \phi_i(\cdot/ \eps))|^2\dx\\
& \lesssim \rho^{-2} \int_{\domain_{\rho}} |\phi_i(\cdot/ \eps)|^2 \dx + \eps^{-2} \int_{\domain_{\rho}} |\nabla \phi_i (\cdot/ \eps) |^2 \dx \leq \C(a) \Big(\rho^{-1} + \frac{\rho}{\eps^2}\Big).
\end{split}
\end{align}
Notice that here we have used the existing corrector bounds from the literature, see Theorem~\ref{CorrectorBoundGNO4}. We then deduce the bound
\begin{align}
\label{initiation_bounded_domain_2}
\begin{split}
\int_{\domain} |\eps \nabla \theta^{\eps}_i|^2\dx \leq \C(a) \Big(\frac{\eps^2}{\rho} + \rho\Big).
\end{split}
\end{align}
Setting $\rho = \eps$ yields the estimate
\begin{align}
\label{FirstBLCEstimateDomain}
\int_{\domain} |\eps \nabla \theta^{\eps}_i|^2\dx \leq \mathcal{C}(a) \eps.
\end{align}
\\

\noindent \emph{Case of $\Hd$:} \qquad  We now show that there exists a large enough radius $r>0$ such that 
\begin{align}
\label{induction_start}
 \fint_{B_r^+(x_0)} |\eps \nabla \theta^{\eps}_i |^2 \dx \leq \Big(\frac{\eps}{r} \Big)^{\frac{2}{3}}.
\end{align}

To show this, we introduce the event that \eqref{induction_start} fails for all $r>0$ and denote it by $\mathcal{B}$. We then have that 
\begin{align}
\mathbb{P} \big[  \mathcal{B} \big] \leq \mathbb{P} \Big[  \fint_{B_r^+(x_0)} |\eps\nabla \theta^{\eps}_i |^2 \dx  \geq   \Big(\frac{\eps}{r} \Big)^{\frac{2}{3}} \Big],  
\end{align}
for any fixed $r>0$. To estimate the right-hand probability, we notice that by the stationarity of $\nabla \theta^{\eps}_i$ tangentially to $\partial \Hd$ we have
\begin{align}
\en{\fint_{B_r^+(x_0)} |\eps \nabla \theta^{\eps}_i |^2 \dx } \lesssim \frac{1}{r} \en{\int_0^{\infty} |\eps \nabla \theta^{\eps}_i (x_0 + z)|^2 \, \dd z } \lesssim \frac{\eps}{r},
\end{align}
where the last inequality follows from \eqref{int_along_rays}. We then apply Markov's inequality to obtain
\begin{align}
\mathbb{P} \Big[   \fint_{B_r^+(x_0)} |\eps \nabla \theta^{\eps}_i |^2   \dx \geq  \Big(\frac{\eps}{r} \Big)^{\frac{2}{3}} \Big] \lesssim \Big(\frac{ \eps}{r} \Big)^{\frac{1}{3}},
\end{align}
for $r>1$.  Since this is true for any $r>0$, the Borel-Cantelli Lemma applied over dyadic scales implies that $\mathbb{P}[\mathcal{B}] =0$.\\

\medskip

\noindent {\bf Step 3: Comparison to constant coefficient problem in the straightened coordinates.} \qquad We now show that there exists a constant $\tau_0>0$ such that for any ratio $\tau\in (0,\tau_0]$ there exist constants $c(\domain,\tau)$, $C^*$, and a random field $\mathcal{C}(a,\cdot)$ with stretched exponential moments such that the estimate
\begin{align}
\label{Step_2_Lemma_4}
\fint_{B^+_r} |\eps \nabla \tilde{\theta}_i^{\eps}|^2 \dd y
\leq
\frac{1}{2} \tau^{-2/3} \fint_{B^+_{R}} |\eps \nabla \tilde{\theta}^{\eps}_i|^2 \dd y
+\C(a, x_0)\frac{\eps}{r}
+C^* \Big(\fint_{B^+_{R}} |\eps \nabla \tilde{\theta}_i^{\eps}|^2 \dd y\Big)^2
\end{align}
holds for any $R$ and $r:=\tau R$ with $c(\domain,\tau) \geq R \geq r \geq \C(a,x_0) \eps$.

\smallskip

We begin by setting up our argument: Notice that \wolog we may assume that $1 \leq r \leq R/ 16$, as otherwise \eqref{Step_2_Lemma_4} is trivial. Let $\zeta$ be a smooth cutoff for $\domain_{\eps}$ in $\domain$ --in particular, $\zeta \equiv1$ in a neighborhood of the boundary of width $\eps/2$ and $\zeta\equiv 0$ outside of $\domain_\eps$. Lastly, for some uniformly elliptic and bounded coefficient field $\bar b$ (not necessarily constant) to be determined, we introduce the solution $v^{\eps}$ to the equation
\begin{equation}
\label{v_general_1}
\begin{aligned}
-\nabla \cdot (\hat{\bar b} \nabla \widetilde{v^{\eps}}) &=  0&& \text{in } \quad  B_{R/2}^+,\\
\widetilde{v^{\eps}} & = \eps \widetilde{\theta^{\eps}_i} - \widetilde{\zeta}\, \widetilde{\phi_i^{\eps}} && \text{on } \quad \partial B_{R/2}^+.
\end{aligned}
\end{equation}  

Starting in our non-flattened coordinates we define the ``homogenization error'' 
\begin{align}
\label{DefinitionW}
w^{\eps}:= \eps \theta^{\eps}_i - \zeta \phi^{\eps}_i - (v^{\eps} + \eta(\phi^{\eps}_j - \eps \theta^{\eps}_j) \partial_j v^{\eps}),
\end{align}
where, for now, $\eta$ is an arbitrary smooth function such that $\eta = 0$ on $\partial B_{R/2}(x_0)$. Notice that $w^{\eps} \equiv 0$ on $\partial (\gamma(B_{R/2}^+))$ and satisfies the relation
\begin{align*}
-\nabla \cdot (a^{\eps}\nabla w^{\eps})
&=-\underbrace{\eps \nabla \cdot (a^{\eps}\nabla \theta^{\eps}_i)}_{=0}+\nabla\cdot (a^{\eps}\nabla (\phi^{\eps}_i \zeta))
+\nabla \cdot(a^{\eps}(\phi^{\eps}_j-\eps \theta^{\eps}_j) \nabla (\eta \partial_j v^{\eps}))
\\&~~~~
+\nabla \cdot (a^{\eps}  (e_j+\nabla \phi^{\eps}_j- \eps \nabla \theta^{\eps}_j) \eta \partial_j v^{\eps} ) + \nabla \cdot ((1-\eta) a^{\eps} \nabla v^{\eps})
\\& 
=\nabla\cdot (a^{\eps}\nabla (\phi^{\eps}_i \zeta))
+\nabla \cdot (a^{\eps} (\phi_j-\eps \theta^{\eps}_j) \nabla (\eta \partial_j v^{\eps}))
\\&~~~~
+\underbrace{a^{\eps} (e_j+\nabla \phi^{\eps}_j-\eps\nabla \theta_j^\eps) \cdot \nabla (\eta \partial_j v^{\eps})}_{=\nabla \cdot ((\bar{a}-\bar{b}) \eta \nabla v^{\eps})+ \nabla \cdot (\bar{b} \eta \nabla v^{\eps})
+ (\nabla\cdot \sigma^{\eps}_{j}-a^{\eps} \eps\nabla \theta^{\eps}_j)\cdot \nabla
(\eta\partial_j v^{\eps})}
+\nabla \cdot ((1-\eta) a^{\eps}\nabla v^{\eps}).
\end{align*}
Additionally using the skew-symmetry of $\sigma^{\eps}$, we find that $w^{\eps}$ solves 
\begin{equation}
\begin{aligned}
\label{cacc_intermediate}
-\nabla \cdot (a^{\eps} \nabla w^{\eps}) &= \nabla \cdot \Big(a^{\eps}\nabla (\zeta \phi^{\eps}_i) + (a^{\eps}(\phi^{\eps}_j-\eps\theta^{\eps}_j)-\sigma^{\eps}_j) \nabla (\eta \partial_j v^{\eps}) + (1-\eta)(a^{\eps}-\bar{b})\nabla v^{\eps} 
\\
&~~~\, \qquad \qquad  + (\bar{a} -\bar{b}) \eta \nabla v^{\eps} \Big) - a^{\eps} \eps \nabla \theta^{\eps}_j \cdot \nabla (\eta \partial_j v^{\eps})  &&\text{in }  \gamma(B_{R/2}^+) ,\\
w^{\eps} &= 0 &&\text{on }\partial (\gamma(B_{R/2}^+)),
\end{aligned}
\end{equation}
where we remark that the \rhs is not fully in divergence-form.  

Changing to our flattened coordinates, we find that $\tilde{w}$ solves 
\begin{equation}
\label{w_general_flattened_prop1}
\begin{aligned}
-\nabla \cdot (\widehat{a^{\eps}} \nabla \widetilde{w^{\eps}}) &= \nabla \cdot \Big(\widehat{a^{\eps}} \nabla (\widetilde{\phi^{\eps}_i \zeta}) + (1-\tilde{\eta}) (\widehat{a^{\eps}} -\widehat{\bar{b}}) \nabla \widetilde{v^{\eps}} +  \tilde{\eta} (\widehat{\bar a} -\widehat{\bar{b}}) \nabla \widetilde{v^{\eps}}) && \\
& \qquad  \qquad \qquad + (\widehat{a^{\eps}} (\widetilde{\phi_i^{\eps}} - \eps\widetilde{\theta_i^{\eps}}) - \widehat{\sigma_i^{\eps}})  \nabla(\tilde{\eta}  \partial_{\xi} \widetilde{v^{\eps}} ) \widetilde{\frac{\partial \gamma^{-1}_\xi}{ \partial x_{i}}}
\\
& \qquad  \qquad \qquad \qquad  \qquad + (a^{\eps,1} (\widetilde{\phi^{\eps}_i} - \eps \widetilde{\theta^{\eps}_i}) - \sigma^{\eps,1}_i )  \tilde{\eta}  \partial_{\xi} \widetilde{v^{\eps}} \stackon[-8pt]{$\nabla \frac{\partial \gamma_{\xi}^{-1}}{\partial x_i}$}{\vstretch{1.4}{\hstretch{2}{\widetilde{\phantom{\;\;\;\;\;\;\;\;}}}}}\Big) &&
\\
& \quad -  \widehat{a^{\eps}} \eps \nabla \widetilde{\theta^{\eps}_i} \cdot \nabla (\tilde{\eta}   \partial_{\xi} \widetilde{v^{\eps}}) \widetilde{\frac{\partial \gamma^{-1}_\xi}{ \partial x_{i}}} - a^{\eps,2} \eps \nabla \widetilde{\theta_i^{\eps}} \cdot \tilde{\eta}  \partial_{\xi} \widetilde{v^{\eps}} \stackon[-8pt]{$\nabla \frac{\partial \gamma_{\xi}^{-1}}{\partial x_i}$}{\vstretch{1.4}{\hstretch{2}{\widetilde{\phantom{\;\;\;\;\;\;\;\;}}}}} && \text{ in } B_{R/2}^+,
\\
\tilde{w} & = 0&&  \text{ on } \partial B_{R/2}^+,
\end{aligned}
\end{equation} 
where we emphasize that we use the Einstein summation convention in terms of $\xi = 1, \ldots, d$ and $i = 1, \ldots, d$, and, furthermore, the definition \eqref{flattened_coeff_general} along with 
\begin{align}
\label{flattened_coeff_general_second}
\begin{split}
b^1_{\mu \beta} (\cdot) := |\det(D \gamma)|  \stackon[-8pt]{$ \frac{\partial \gamma_{\mu}^{-1}}{\partial x_{\alpha}} b_{\alpha \beta}  $}{\vstretch{1.4}{\hstretch{2}{\widetilde{\phantom{\;\;\;\;\;\;\;\;}}}}}(\cdot)
\end{split}
\end{align}
applied for $b=a^{\eps}$ and $b=\sigma^{\eps}_i$, and 
\begin{align}
\label{flattened_coeff_general_third}
\begin{split}
a^2_{\alpha \nu} (\cdot) := |\det(D \gamma)|  \stackon[-8pt]{$  a_{\alpha \beta} \frac{\partial \gamma_{\nu}^{-1}}{\partial x_{\beta}} $}{\vstretch{1.4}{\hstretch{2}{\widetilde{\phantom{\;\;\;\;\;\;\;\;}}}}}(\cdot).
\end{split}
\end{align}
When $\domain$ is a half-space, then the last term under the divergence on the \rhs of \eqref{w_general_flattened_prop1} vanishes, as well as the last term on the right-hand side.

 \medskip
 
We now start the core of our argument: Thanks to the boundary data of $v^{\eps}$, notice that
\begin{align*}
\eps \widetilde{\theta_i^{\eps}} - \widetilde{\zeta \phi_i^{\eps}} - (\widetilde{v^{\eps}}(0) +\partial_j  \widetilde{v^{\eps}}(0) (\gamma_j + \widetilde{\phi^{\eps}_j} - \eps \widetilde{\theta_j^\eps})) = \eps\widetilde{\theta^{\eps}_i} - \widetilde{\zeta \phi^{\eps}_i} - \partial_1 \widetilde{v^{\eps}}(0) (\gamma_1 + \widetilde\phi^{\eps}_1 - \eps\widetilde\theta^{\eps}_1).
\end{align*}
Furthermore, $\eps \widetilde{\theta^{\eps}_i} - \widetilde{\zeta \phi^{\eps}_i} - \partial_1 \widetilde{v^{\eps}}(0) (y_1 + \widetilde\phi^{\eps}_1 - \eps\widetilde\theta^{\eps}_1)$ solves the equation 
\begin{equation}
\begin{aligned}
-\nabla \cdot (\widehat{a^{\eps}}\nabla (\eps\widetilde\theta^{\eps}_i - \widetilde{\zeta \phi^{\eps}_i} - \partial_1 \widetilde{v^{\eps}}(0) (y_1 + \widetilde\phi^{\eps}_1 - \eps\widetilde\theta^{\eps}_1))) &= \nabla \cdot  (\widehat {a^\eps} \nabla (\widetilde{\zeta \phi^\eps_i}) \\
& \qquad   + \partial_1 \widetilde{v^{\eps}}(0) \widehat{a^{\eps}}  \nabla  (y_1 - \gamma_1)) &&\text{in } B_{R/2}^+, \nonumber
\\
\eps \widetilde\theta^{\eps}_i - \widetilde{\zeta \phi^{\eps}_i} - \partial_1 \widetilde v^{\eps}(0) (y_1 +\widetilde \phi^{\eps}_1 - \eps\widetilde\theta^{\eps}_1) &= 0 &&\text{on } \partial B_{R/2}^+. \nonumber
\end{aligned}
\end{equation}
Emulating the classical proof for the Caccioppoli estimate, we find that 
\begin{align}
\label{Cacc_intermediate_1}
\begin{split}
& \int_{B_r^+} \big|\eps \nabla \widetilde \theta^{\eps}_i - \partial_1 \widetilde v^{\eps}(0) (e_1+\nabla \widetilde\phi^{\eps}_1-\eps\nabla \widetilde \theta^{\eps}_1) \big|^2 \dd y
\\
&  \lesssim  \int_{B_{2r}^+} |\nabla(\widetilde{\zeta \phi^\eps_i})|^2 + |\partial_1 \widetilde{v^\eps} (0)|^2 \ |e_1 - \nabla \gamma_1|^2 \dd y +  \frac{1}{r^2} \int_{B_{2r}^+} \big| \eps \widetilde{\theta^\eps_i} - \widetilde{\zeta \phi^\eps_i}  - \partial_1 \widetilde{v^\eps}(0) (y_1 + \widetilde{\phi^\eps_1} - \eps \widetilde{\theta^\eps_1}) \big|^2 \dd y,
\end{split}
\end{align}
where we have used that $B_{2r}^+ \subseteq B_{R/2}^+$. 

We now, for some $\rho\leq R/16$ to be determined, set $\eta$ such that $\tilde \eta$ is supported in $B_{R/2-\rho}$, $\tilde \eta = 1$ in $B_{R/2-2\rho}$, and $|\nabla \tilde{\eta}| \lesssim \rho^{-1}$. Towards showing \eqref{Step_2_Lemma_4}, we now use the definition of $w^{\eps}$ (see \eqref{DefinitionW}), the Poincar\'e inequality on $B_{2r}^+$ with homogeneous Dirichlet boundary data on $(\partial B_{2r}^+)\cap B_{2r}^+$, that $\tilde v(0) = 0$, and $r \leq R/16$ and $\delta \leq 1/2$ to obtain
\begin{align*}
& \frac{1}{r^2} \int_{B_{2r}^+} \big|\eps \widetilde{\theta^{\eps}_i} - \widetilde{\zeta \phi^{\eps}_i} - \partial_1 \widetilde{v^{\eps}}(0) (y_1 + \widetilde{\phi^{\eps}_1}- \eps \widetilde{\theta^{\eps}_1}) \big|^2 \dd y
\\
& \lesssim  \int_{B^+_{R /2}}  \Big( 1 - \frac{2|y|}{R}\Big)^{\kappa} |\nabla \widetilde{ w^{\eps}}|^2 \dd y
+ r^{d-2} \sup_{y\in B^+_{2r}} \big|\widetilde{v^{\eps}}(y) - (\widetilde{v^{\eps}}(0) + \partial_j \widetilde{v^{\eps}}(0) y_j) \big|^2
\\
& \,\,\,\,\,+ r^{-2} \sup_{ B^+_{2r}} \big|\nabla \widetilde{v^{\eps}} - \nabla \widetilde{v^{\eps}}(0)\big|^2 \int_{B_{2r}^+} |\widetilde{\phi^{\eps}}|^2 + |\eps \widetilde{\theta^{\eps}}|^2 \dd y,
\end{align*}
where we have also used that the weight in the second line satisfies $(1-2|y|/R)^{\kappa} \gtrsim 1$ in $B_{R/4}^+$. We remark that the (so far arbitrary) exponent $\kappa$ will be chosen so that Lemma~\ref{Hardy} will be applicable below. Continuing, we find that
\begin{align*}
& \frac{1}{r^2} \int_{B_{2r}^+} \big| \eps \widetilde{\theta^\eps_i} - \widetilde{\zeta \phi^{\eps}_i}  - \partial_1 \widetilde{ v^{\eps}}(0) (y_1 + \widetilde{\phi^{\eps}_1} - \eps \widetilde{\theta^\eps_1}) \big|^2 \dd y
\\
& \lesssim   \int_{B^+_{R/2}}  \Big(1 - \frac{2|y|}{R}\Big)^{\kappa} |\nabla \widetilde{w^{\eps}}|^2 \dd y
+ \sup_{B_{2r}^+} |\nabla^2 \widetilde{v^{\eps}}|^2  \Big( r^{d+2}+ \int_{B_{2r}^+}  |\widetilde{\phi^{\eps}}|^2 + |\eps \widetilde{\theta^{\eps}}|^2 \dd y \Big).
\end{align*}
Introduce the notation $\chi(\domain \neq \mathbb{H}^d)$ to denote the indicator of the case that $\domain$ is a bounded $C^{1,1}$-domain. We then apply \eqref{weighted_energy} from Lemma~\ref{Hardy} to $\tilde w$ in $B^+_{R/2}$ and use equation \eqref{w_general_flattened_prop1} to obtain
\begin{align}
\label{C_update_1}
\begin{split}
& \frac{1}{r^2} \int_{B_{2r}^+} \big|\eps \widetilde{\theta^\eps_i} - \widetilde{\zeta \phi^{\eps}_i}  - \partial_1 \widetilde{ v^\eps }(0) (y_1 + \widetilde{\phi^\eps_1} - \eps \widetilde{\theta^\eps_1}) \big|^2 \dd y
\\
& \lesssim  \int_{B_{R/2}^+} |\nabla (\widetilde{\zeta \phi^\eps_i})|^2 \dd y
\\&~~~
+ \sup_{B^+_{R/2- \rho}}  ((\rho^{-2} +  \chi(\domain \neq \mathbb{H}^d)) |\nabla \widetilde{v^{\eps}}|^2 + |\nabla^2 \widetilde{v^{\eps}}|^2)  \int_{B_{R/2}^+}|\widetilde{\phi^{\eps}}|^2+|\eps \widetilde{\theta^{\eps}}|^2+|\tilde\sigma^\eps|^2 + R^2 |\eps \nabla \widetilde{\theta^{\eps}}|^2 \dd y
\\&~~~
+  \sup_{B_{2r}^+} |\nabla^2 \widetilde{v^{\eps}}|^2  \Big( r^{d+2}+ \int_{B_{2r}^+}  |\widetilde{\phi^{\eps}}|^2 + |\eps \widetilde{\theta^{\eps}}|^2 \dd y  \Big)
\\&~~~
+ \Big(\frac{\rho}{R}\Big)^{\kappa} \int_{B^+_{R/2}} |\nabla \widetilde{v^\eps}|^2    \dd y+  \int_{B^+_{R/2}}  |\hat{\bar{a}} - \hat{\bar{b}}|^2 | \nabla \widetilde{v^\eps}|^2 \dd y,
\end{split}
\end{align}
where we have crucially used that $\gamma$ is $C^{1,1}$ and our choice of $\eta$.

We now set $\bar b$ such that $\hat{\bar b} = \hat{\bar a} (0)$. Furthermore plugging the bound \eqref{C_update_1} into \eqref{Cacc_intermediate_1} and using that $r\leq R/16$, we obtain
\begin{align}
\label{C_update_2_1}
\begin{split}
&\int_{B_r^+} \big| \eps \nabla \widetilde{\theta^\eps_i} - \partial_1 \widetilde{v^\eps}(0) (e_1+\nabla \widetilde{\phi^ \eps_1}- \eps \nabla \widetilde{\theta_1^\eps}) \big|^2 \dd y
\\&
\lesssim  \int_{B_{R/2}^+} |\nabla (\widetilde {\zeta \phi^\eps_i})|^2  \dd y 
\\&~~~
+   \sup_{B^+_{R/2 - \rho}}  ((\rho^{-2}+\chi(\domain \neq \mathbb{H}^d)) |\nabla \widetilde{v^\eps}|^2 + |\nabla^2 \widetilde{v^\eps}|^2)  \int_{B^+_{R/2}}|\widetilde{\phi^\eps}|^2+|\eps\widetilde{\theta^\eps}|^2+|\widetilde{\sigma^\eps}|^2 + R^2 |\eps\nabla  \widetilde{\theta^\eps}|^2  \dd y
\\&~~~
+  \sup_{B_{2r}^+} |\nabla^2 \widetilde{v^\eps}|^2  \Big(r^{d+2}  + \int_{B_{2r}^+} |\widetilde{\phi^\eps}|^2 + |\eps\widetilde{\theta^\eps}|^2  \dx \Big)  + \bigg(\Big(\frac{\rho}{R}\Big)^{\kappa} +\chi(\domain \neq \mathbb{H}^d) r^2\bigg) \int_{B_{R/2}^+} |\nabla \widetilde{v^\eps}|^2 \dd y
\\&~~~
+ \int_{B_{2r}^+} |\partial_1 \widetilde{v^\eps} (0)|^2 \ |e_1 - \nabla \gamma_1|^2 \dd y.
\end{split}
\end{align}
We now notice two things: First, combining the corrector bounds from the literature (see Theorem~\ref{CorrectorBoundGNO4}) with the Caccioppoli estimate applied to $\widetilde{\phi^\eps_i} + \gamma_i$, we obtain 
\begin{align}
\label{RegularityBoundNablaPhi}
\sup_{r\geq 1} \fint_{B_r} |e_i+\nabla \widetilde{\phi^\eps_i}|^2 \dd y
\leq \mathcal{C}(a,x_0).
\end{align}
We also use the constant coefficient regularity estimate \eqref{constant_reg_1} of Lemma \ref{constant_coeff_reg} applied to $\widetilde{v^\eps}$. In particular, notice that when $r \leq R/16$, the estimate 
\begin{align}
\label{RegularityConstantCoefficient_2}
\sup_{ B^+_{2r}}  |\nabla^2 \widetilde{v^\eps}|^2 \dd y\lesssim \frac{1}{R^2} \fint_{B^+_{R/2}} |\nabla \widetilde{v^\eps}|^2 \dd y
\end{align}
follows from \eqref{constant_reg_1}. Similarly, we have that 
\begin{align*}
|\partial_1 \widetilde{v^\eps} (0)|^2 \lesssim \fint_{B_{R/2}^+} | \nabla \widetilde{v^\eps} |^2 \dd y,
\end{align*}
whereby the regularity of $\gamma$ allows us to bound
\begin{align}
\label{RegularityConstantCoefficient_3}
 \int_{B_{2r}^+} |\partial_1 \widetilde{v^\eps} (0)|^2 \ |e_1 - \nabla \gamma_1|^2 \dd y \lesssim \chi(\domain \neq \mathbb{H}^d) r^{d+2} \fint_{B_{R/2}^+} | \nabla \widetilde{v^\eps} |^2 \dd y.
\end{align}
Notice that in \eqref{RegularityConstantCoefficient_3} we have used that $|e_1 - \nabla \gamma_1|^2 \leq cr^2$. 

Using the whole-space corrector bounds from Theorem~\ref{CorrectorBoundGNO4}; injecting \eqref{RegularityBoundNablaPhi}, \eqref{RegularityConstantCoefficient_2}, and \eqref{RegularityConstantCoefficient_3} into \eqref{C_update_2_1}; and also applying Lemma~\ref{constant_coeff_reg}, while also taking averages on both sides of the inequality and choosing $0<\rho\ll r$, we obtain
\begin{align}
\nonumber
& \fint_{B_r^+} \big| \eps \nabla \widetilde{\theta^{\eps}_i} - \partial_1 \widetilde{v}(0) (e_1+\nabla \widetilde{\phi^\eps_1}- \eps \nabla \widetilde{\theta^\eps_1})\big|^2 \dd y
\\&
\nonumber
\lesssim  \Big(\frac{R}{r} \Big)^d  \fint_{B_{R/2}^+} |\nabla ( \widetilde{ \zeta \phi^\eps_i})|^2 
\dd y\\ \nonumber
&~~~ + \Big(\frac{R}{r} \Big)^d ( \rho^{-2} + \chi(\domain \neq \mathbb{H}^d)) \Big(\frac{R}{\rho} \Big)^d \fint_{B_{R/2}^+} |\nabla \widetilde{v^\eps}|^2 \dd y \Big(\C(a,x_0)\eps^2 + \fint_{B_{R/2}^+} |\eps\widetilde{\theta^{\eps}}|^2 + R^2 |\eps \nabla \widetilde{\theta^\eps}|^2 \dd y \Big) \\ \nonumber
&~~~ +  R^{-2} \fint_{B_{R/2}^+} |\nabla \widetilde{v^\eps}|^2 \dd y \Big( r^2+ \C(a,x_0) \eps^2 + \fint_{B_{2r}^+} |\eps \widetilde{\theta^\eps}|^2 \dd y \Big) \\ \nonumber
&~~~ +  \Big( \frac{R}{r}\Big)^d \Big( \Big(\frac{\rho}{R}\Big)^{\kappa} + r^2 \chi(\domain \neq \mathbb{H}^d) \Big)\fint_{B_{R/2}^+} |\nabla \widetilde{v^\eps}|^2 \dd y \\ \nonumber
&
\lesssim  \Big(\frac{R}{r} \Big)^d  \fint_{B_{R/2}^+} |\nabla (\widetilde{\zeta \phi^\eps_i})|^2 
\dd y \,  \\ 
\label{C_update_2}
& ~~~
+     \fint_{B_{R/2}^+} |\nabla \widetilde{v^\eps}|^2 \dd y \bigg[  \Big(\frac{R}{r} \Big)^d \Big( \frac{R}{\rho}\Big)^{d+2}   (R^{-2} + \chi(\domain \neq \mathbb{H}^d))  \Big( \C(a, x_0) \eps^2  +  \fint_{B_{R/2}^+}| \eps\widetilde{\theta^{\eps}}|^2 + R^2 |\eps\nabla \widetilde{\theta^\eps}|^2 \dd y \Big)
\\&~~~~~~~~~~~~~~~~~~~~~~~~~~~~~~~~~~~~~~~~~~~~
+ R^{-2} \fint_{B_{2r}^+}|\eps\widetilde{\theta^\eps}|^2 \dd y +   \Big( \frac{R}{r}\Big)^d \Big(\Big(\frac{\rho}{R}\Big)^{\kappa} +r^2 \chi(\domain \neq \mathbb{H}^d) \Big) +\Big(\frac{r}{R}\Big)^2 \bigg].
\nonumber
\end{align}
To continue processing \eqref{C_update_2} we again collect some estimates: First, we notice that
\begin{align}
\label{BoundXiPhi}
\fint_{B^+_{R/2}} |\nabla (\widetilde{\zeta \phi^\eps_i})|^2 \dd y
\lesssim \mathcal{C}(a,x_0) \frac{\eps}{R},
\end{align}
which follows from the whole-space corrector bounds (see Theorem~\ref{CorrectorBoundGNO4}), the previously obtained \eqref{RegularityBoundNablaPhi}, and the definition of the cutoff $\zeta$.  Combining \eqref{BoundXiPhi} with the Poincar\'{e} inequality on $B_{R/2}^+$ with homogeneous Dirichlet boundary data on $\partial B_{R/2}^+ \cap B_{R/2}^+$ yields that
\begin{align}
\label{BoundTheta}
\begin{split}
\fint_{B^+_{R/2}} |\eps \widetilde{\theta^\eps_i}|^2 \dd y
&\lesssim
 \fint_{B^+_{R/2}} |\eps \widetilde{\theta^\eps_i}-\widetilde{\zeta \phi^\eps_i}|^2 \dd y
+ \fint_{B^+_{R/2}} |\widetilde{\zeta \phi^\eps_i}|^2 \dd y
\\&
\lesssim
R^2 \fint_{B^+_{R/2}} |\nabla (\eps \widetilde{\theta^\eps_i}-\widetilde{\zeta \phi^\eps_i})|^2 \dd y
+\mathcal{C}(a,x_0) \eps R
\\&
\lesssim R^2
\fint_{B^+_{R/2}} | \eps \nabla \widetilde{\theta_i^\eps}|^2 \dd y
+\mathcal{C}(a,x_0) \eps R.
\end{split}
\end{align}
The bound \eqref{BoundTheta} is clearly also valid for $R/2$ replaced by $2r$.

We also notice that by \eqref{v_general_1} and by our definition $\hat{\bar b}(y) := \hat{\bar a} (0)$ the function $\widetilde{v^\eps} - (\eps \widetilde{\theta_i^\eps} - \widetilde{\zeta \phi^\eps_i})$ solves the equation 
\begin{equation}
\label{equ_difference}
\begin{aligned}
-\nabla \cdot (\hat{\bar{a}}(0)\nabla ( \widetilde{v^\eps} - (\eps \widetilde{\theta_i^\eps} - \widetilde{\zeta \phi^\eps_i}))) &= \nabla \cdot (\hat{\bar{a}}(0) \nabla (\eps \widetilde{\theta_i^\eps} - \widetilde{\zeta \phi^\eps_i}))
&&\text{in }B_{R/2}^+,
\\
\widetilde{v^\eps} - (\eps \widetilde{\theta^\eps_i} - \widetilde{\zeta \phi^\eps_i}) &= 0&&\text{on }\partial B_{R/2}^+,
\end{aligned}
\end{equation}
which by the energy estimate yields 
\begin{align}
\label{energy_1}
\begin{split}
\fint_{B_{R/2}^+} | \nabla \widetilde{v^\eps} |^2 \dd y & \lesssim  \fint_{B_{R/2}^+} | \eps\nabla\widetilde{\theta^\eps_i}|^2  + |\nabla(\widetilde{\zeta \phi^\eps_i})|^2 \dd y
\\
& \stackrel{\eqref{BoundXiPhi}}{\lesssim} \fint_{B_{R/2}^+} |\eps\nabla \widetilde{\theta^\eps_i}|^2 \dd y + \mathcal{C}(a, x_0) \frac{\eps}{R}.
\end{split}
\end{align}
Inserting bounds \eqref{BoundXiPhi}, \eqref{BoundTheta}, and \eqref{energy_1} into \eqref{C_update_2} and additionally using 
\begin{align}
\label{PointwiseGradBound}
\begin{split}
& \fint_{B_r^+} |\partial_1 \widetilde{v^\eps} (0)|^2 |e_1+\nabla \tilde{\phi^\eps_1}- \eps\nabla \widetilde{\theta^\eps_1}|^2 \dd y \\
&\lesssim 
r^{-2} \fint_{B_{2R}^+} | \nabla \widetilde{v^\eps} |^2 \dd y \Big( \fint_{B_{2r}^+} | y_1 + \widetilde{\phi^\eps_1} - \eps \widetilde{\theta^\eps_1}|^2 \dd y + r^2 \Big),
\end{split}
\end{align}
which follows from the Caccioppoli inequality applied to $\gamma_1 + \widetilde{\phi^\eps_1} - \eps \widetilde{\theta^\eps_1}$, \eqref{constant_reg_1}, and the regularity of $\gamma$, 
yields
\begin{align}
\nonumber
& \fint_{B_r^+} \big|\eps \nabla \widetilde{ \theta_i^\eps }\big|^2 \dd y
\\&
\nonumber
\lesssim  \Big(\frac{R}{r} \Big)^d \C(a,x_0) \frac{\eps}{R}
\\ \nonumber
& ~~~
+     \Big( \fint_{B_R^+} |\eps \nabla \widetilde{\theta^\eps}|^2 \dd y + \C(a,x_0) \frac{\eps}{R} \Big)\\ 
\label{final_C_update_1}
& \hspace{1cm}\times \bigg[  \Big(\frac{R}{r} \Big)^d \Big( \frac{R}{\rho}\Big)^{d+2}  (R^{-2}+\chi(\domain \neq \mathbb{H}^d)) \Big( \C(a, x_0) \eps^2  +  \C(a,x_0)\eps R +R^2 \fint_{B_R^+(x_0)} |\eps \nabla \widetilde{\theta^\eps}|^2 \dd y \Big)
\\
\nonumber
& \hspace{2.0cm}+ R^{-2} \Big( \C(a,x_0)\eps r + r^2\fint_{B_{2r}^+}|\eps\nabla \widetilde{\theta^\eps}|^2 \dd y \Big)+  \Big( \frac{R}{r}\Big)^d \Big(\Big(\frac{\rho}{R}\Big)^{\kappa} + r^2 \chi(\domain \neq \mathbb{H}^d) \Big) +\Big(\frac{r}{R}\Big)^2\bigg]
\\ \nonumber
&~~~ + r^{-2} \Big(\fint_{B_{R}^+} |\eps \nabla \widetilde{\theta^\eps}|^2 \dd y+ \C(a, x_0) \frac{\eps}{R}\Big) \Big( r^2 + \C(a,x_0) \eps^2 + r^2 \fint_{B_{2r}^+(x_0)} |\eps \nabla \widetilde{\theta^\eps}|^2\dd y +\C(a,x_0)\eps r \Big).
\end{align}
Requiring $\tau_0$ to be small enough, choosing $\rho/R$ small enough depending on $r/R$, and then choosing the constants $c(\domain)$ and $\mathcal{C}(a,x_0)$ in the condition $c(\domain) \geq R \geq r \geq \C(a,x_0) \eps \geq \eps$ suitably, we obtain the desired estimate \eqref{Step_2_Lemma_4}.

\bigskip

\noindent {\bf Step 3: Iteration to smaller scales.} \qquad  We observe that the result \eqref{Step_2_Lemma_4} of the previous step implies that
\begin{align*}
\fint_{B^+_{\tau^{k+1} R_0}} |\eps \nabla \widetilde{\theta^\eps}|^2 \dx
\leq
\frac{1}{2} \tau^{-2/3} \fint_{B^+_{\tau^{k} R_0}} |\eps \nabla \widetilde{\theta^\eps}|^2 \dx
+\mathcal{C}(a,x_0) \frac{\eps}{\tau^{k+1} R_0}
+C^* \bigg(\fint_{B^+_{\tau^{k} R_0}} |\eps \nabla \widetilde{\theta^\eps}|^2 \dx \bigg)^2
\end{align*}
for any $R_0>0$ as long as $\tau^{k+1} R_0 \geq \C(a,x_0) \eps$. This enables us to inductively propagate an estimate of the form
\begin{align*}
\fint_{B^+_{\tau^{k} R_0}} |\eps\nabla \widetilde{\theta^\eps}|^2 \dx
\leq 4\mathcal{C}(a,x_0)\Big(\frac{\eps}{\tau^k R_0}\Big)^{2/3}
\end{align*}
as long as
\begin{align*}
C^* \bigg(4\mathcal{C}(a,x_0)\Big(\frac{\eps}{\tau^k R_0}\Big)^{2/3}\bigg)^2
\leq \mathcal{C}(a,x_0)\Big(\frac{\eps}{\tau^{k+1} R_0}\Big)^{2/3}.
\end{align*}
Note that the latter condition is verified as long as $\tau^{k+1} R_0 \geq (16 C^* \mathcal{C}(a,x_0))^{3/2} \eps$.
Returning to our original coordinate, the start of the induction is provided by \eqref{induction_start} in the case $\domain=\Hd$ respectively by \eqref{FirstBLCEstimateDomain} in the case of a bounded $C^{1,1}$ domain. This directly entails our desired result.
\end{proof}

\bigskip 

With the result of Lemma \ref{Prop_1_aux} in hand, the argument for Proposition \ref{intermediate_lemma} is now a simple matter of additionally using standard Schauder estimates at small scales.

\begin{proof}[Proof of Proposition \ref{intermediate_lemma}] Throughout our argument we make free use of the notation listed previously under ``Notation''. We set $\tilde{\C}(a, p(x_0))$ to be determined via Lemma \ref{Prop_1_aux}, and split our argument into two cases:\\

\smallskip

\noindent{\bf Case 1: $2 \dist(x_0, \partial \domain) \geq \tilde{\C}(a, p(x_0)) \eps$}  \qquad In this case the statement of Proposition~\ref{intermediate_lemma} is an immediate consequence of \eqref{lemma_3_eqn} applied around the point $p(x_0)$ with $r=2 \dist(x_0, \partial \domain)$ and the standard interior version of the large-scale regularity estimate \eqref{Reg06} applied with $r=\frac{1}{2}\dist(x_0, \partial \domain)$. We remark that the interior version of \eqref{Reg06} follows, \eg, via \cite[Theorem 1 \& Theorem 2]{GNO_final} combined with the argument for Corollary~\ref{cor_Reg06}.

\bigskip

\noindent{\bf Case 2: $2\dist(x_0, \partial \domain) \leq \tilde{\C}(a, p(x_0)) \eps$}  \qquad Let $\zeta$ be a cutoff with $\zeta=1$ on $\partial\domain$ and $\zeta = 0$ in $\domain_{\eps}^c$. Notice that $\eps\theta^{\eps}_i-\zeta \phi^{\eps}_i$ solves $-\nabla \cdot (a^{\eps}\nabla (\eps \theta^\eps_i-\zeta \phi^\eps_i)) = \nabla \cdot (a^\eps\nabla (\zeta \phi^\eps_i))$ in $\domain$ (with homogeneous Dirichlet boundary data). Thus, standard Schauder theory (see, \textit{e.g.}, \cite[Theorem 5.19]{GM_book}) yields 
\begin{align}
\label{Prop_1_2}
\begin{split}
&\sup_{B_{\eps}(x_0)\cap\domain} |\eps \nabla \theta^\eps_i|(x) \leq \| (\eps \nabla \theta^\eps_i)(\tfrac{\cdot-x_0}{\eps}) \|_{C^{0,\alpha}(B_1\cap \eps^{-1}(\domain-x_0))}\\
&\lesssim \C(a,x_0) \Bigg( \Big( \fint_{B_{2\eps}(x_0)\cap \eps^{-1}(\domain-x_0)} |\nabla (\eps \theta^\eps_i-\zeta \phi^{\eps}_i) |^2 \dx   \Big)^{\frac{1}{2}} +  \| (a^\eps \nabla(\eta \phi^\eps_i)) (\tfrac{\cdot-x_0}{\eps})  \|_{C^{0,\alpha} (B_{2}\cap \eps^{-1}(\domain-x_0))} \Bigg).
\end{split}
\end{align}
To treat the first term on the right-hand side of \eqref{Prop_1_2} we use the standard whole-space corrector bounds (Theorem~\ref{CorrectorBoundGNO4}) as well as the definition of $\zeta$ to the extent of 
\begin{align}
 \label{Prop_1_2.6}
& \Big( \fint_{B_{2\eps}(x_0)\cap \domain} |\nabla (\eps \theta_i^\eps-\zeta \phi^\eps_i) |^2 \dx   \Big)^{\frac{1}{2}}  \lesssim
 \Big( \fint_{B_{2\eps}(x_0)\cap \domain} |\eps \nabla \theta^\eps_i|^2 \dx   \Big)^{\frac{1}{2}} + \mathcal{C}(a,x_0).
\end{align}
Combining this with \eqref{lemma_3_eqn} of Lemma \ref{Prop_1_aux} yields
\begin{align*}
 \Big( \fint_{B_{2\eps}(x_0)\cap \domain} |\nabla (\eps \theta^\eps_i-\zeta \phi^\eps_i) |^2 \dx   \Big)^{\frac{1}{2}}  & \lesssim \C(a,x_0).
\end{align*}
To control the second term on the right-hand side of \eqref{Prop_1_2}, we make use of Lemma~\ref{CorrectorRegularity}, which holds thanks to the availability of assumption (A4), as well as the standard corrector bounds recalled in Theorem~\ref{CorrectorBoundGNO4}. In particular, we obtain
 \begin{align*}
\| (a^\eps \nabla(\eta \phi^\eps_i)) (\tfrac{\cdot-x_0}{\eps})  \|_{C^{0,\alpha} (B_{2}\cap \eps^{-1}(\domain-x_0))}
\lesssim \C(a,x_0) \| \nabla \phi^\eps_i  \|_{C^{0, \alpha} (B_{2\eps}(x_0))} + \eps^{-1} \| \phi^\eps_i  \|_{C^{0, \alpha}(B_{2\eps}(x_0))}  \lesssim \C(a,x_0).
 \end{align*}
This concludes the proof.
\end{proof}

\subsection{Proof of Lemma \ref{Hardy}: A weighted Hardy inequality}
\label{Prop_1_auxil}

To complete our argument for Proposition \ref{intermediate_lemma} it now only remains to give the proof of Lemma~\ref{Hardy}.
\begin{proof}[Proof of Lemma~\ref{Hardy}]
Letting $\eta(x) = \big(1-\frac{|x-x_0|}{r}\big)^{\frac{\kappa}{2}}$, we test the equation for $u$ with $\eta^2 u$. Using the uniform ellipticity of $a$, we obtain that 
\begin{align}
\label{aug_edit_1}
\begin{split}
&  \int_{B^+_r(x_0)} \Big(1-\frac{|x-x_0|}{r}\Big)^\kappa |\nabla u|^2 \dx \\
& \lesssim \frac{\kappa^2}{r^2} \int_{B^+_r(x_0)} \Big(1-\frac{|x-x_0|}{r}\Big)^{\kappa-2} u^2  \dx + \int_{B^+_r(x_0)} \Big(1-\frac{|x-x_0|}{r}\Big)^\kappa \big(|g|^2 + \frac{r^2}{\kappa^2} f^2\big)  \dx.
\end{split}
\end{align}
We use that $x_0 \in \partial \Hd$ to write the domain $B_r^+(x_0)$ in terms of $1$-dimensional filaments that originate at $x_0$ and terminate on the round part of $\partial B_r^+(x_0)$. On each of the rays we apply the $1$-dimensional Hardy's inequality (see, \textit{e.g.}, \cite[Theorem~9.16]{Zygmund})
\begin{equation}\nonumber
 \int_0^{r'} \rho^{\kappa-2} h^2(\rho) \ud \rho \lesssim \frac{1}{(1-\kappa)^2} \int_0^{r'} \rho^{\kappa} (h'(\rho))^2 \ud \rho,
\end{equation}
for $r'>0$, which holds under the assumption $h(0)=0$, to the effect of 
\begin{equation}\nonumber
 \int_{B^+_r (x_0) \setminus B^+_{r/3}(x_0)} \Big(1-\frac{|x-x_0|}{r}\Big)^{\kappa-2} u^2 \dx
\lesssim \frac{r^2}{(1-\kappa)^2} \int_{B^+_r (x_0) \setminus B^+_{r/3}(x_0)} \Big(1-\frac{|x-x_0|}{r}\Big)^{\kappa} |\nabla u|^2 \dx.
\end{equation}
Notice that we have removed the half-ball $B^+_{r/3}(x_0)$ from the domain of integration on the left-hand side. To finish the argument notice that for $x \in B^+_{2r/3}(x_0)$ the values of the weight $(1-|x-x_0|/r)^\kappa$ are uniformly bounded below away from $0$ and bounded above by $1$, which when combined with the Poincar\'e inequality yields
\begin{align*}
&\int_{B^+_{r/3}(x_0)} \Big(1-\frac{|x-x_0|}{r}\Big)^{\kappa-2} u^2 \dx
\\&
\lesssim
\int_{B^+_{2r/3}(x_0)\setminus B^+_{r/3}(x_0)} \Big(1-\frac{|x-x_0|}{r}\Big)^{\kappa-2} u^2 \dx
+r^2 \int_{B^+_{2r/3}(x_0)} \Big(1-\frac{|x-x_0|}{r}\Big)^{\kappa} |\nabla u|^2 \dx.
\end{align*}
We may then in particular choose $\kappa$ small enough to absorb the first term on the right-hand side of \eqref{aug_edit_1} into the left-hand side --thereby yielding \eqref{weighted_energy}.
\end{proof}

\section{Argument for Theorem \ref{large_scale_reg_domain}: Regularity for random $a$-harmonic functions on regular domains}
\label{operators_on_domain}

\subsection{Proof of Theorem \ref{large_scale_reg_domain}}
Using Proposition \ref{intermediate_lemma} we are now able to prove the large-scale regularity claimed in Theorem \ref{large_scale_reg_domain}. The proof is, in fact, quite similar to that of Proposition \ref{intermediate_lemma} --in particular, relying on a homogenization-infused Campanato iteration. 

We remark that the contents of Theorem \ref{large_scale_reg_domain} in the case that $\domain = \Hd$ have been shown in \cite{FischerRaithel}. In \cite{FischerRaithel} it is implicitly shown that the minimal radius in the half-space case is essentially a power of the whole-space minimal radius --thereby inheriting the uniform stretched exponential moments (shown for the whole-space case in \cite[Theorem 1 \& Theorem 2]{GNO_final}). Furthermore, while we restrict ourselves to a bounded $C^{1,1}$-domain below, this is a notational choice and the case $\domain = \Hd$ also follows from the below argument.

\smallskip 

\begin{proof}
\emph{Step 1:}
We first observe that it suffices to prove the result for $x_0 \in \partial \domain$. In particular, for $x_0\notin \partial\domain$ and for $R\leq 2\operatorname{dist}(x_0,\partial\domain)$, the result follows directly from the interior result of \cite{GNO_final} as well as \eqref{intermediate} (the latter being required for replacing the whole-space corrector $\phi^\eps$ by the corrector with Dirichlet boundary conditions $\phi^\eps-\eps \theta_i^\eps$). For points $x_0\notin \partial\domain$ with $R\geq 2\operatorname{dist}(x_0,\partial\domain)$), assume that the statements of the theorems are valid for $p(x_0)$ --recall that thanks to the regularity of $\partial \domain$ we can choose $\C(\domain)$ small enough so that $p(x_0)$, the closest point to $x_0$ on the boundary, is well-defined. We then deduce
\begin{align*}
\fint_{\domain \cap B_{\max\{2\operatorname{dist}(x_0,\partial\domain),2r\}}(p(x_0))} |\nabla u|^2 \dx &\lesssim \fint_{\domain \cap B_{R/2}(p(x_0))} |\nabla u|^2 \dx.
\end{align*}
For $r\geq \operatorname{dist}(x_0,\partial\domain)$ this directly entails the statement \eqref{large_scale_mvp_domain} of the theorem via an inclusion of balls, while for $r\leq \operatorname{dist}(x_0,\partial\domain)$ it does so in conjunction with the interior result of \cite{GNO_final}. The reduction of the proof of the excess-decay \eqref{excess_decay_thm} to the case of boundary points $x_0\in \partial\domain$ is achieved analogously.

\noindent
\emph{Step 2:}
Before comparing $u$ to the solution of the homogenized problem, we first flatten the boundary. We use the same notation as in Step 1 of the proof Lemma \ref{Prop_1_aux}. In particular, we assume that $c(\domain)$ is small enough so that $B_R(x_0) \cap \domain$ is in one chart, as otherwise \eqref{large_scale_mvp_domain} is trivial by simply choosing a large enough domain dependent constant. Furthermore, we may choose $c(\domain)$ small enough so that there exists $\delta: = \delta(\domain) \leq 1/2$ such that, for every $r \leq R$, we have $B_{(1-\delta)r} (x_0) \cap \domain \subseteq \gamma (B_r^+) \subseteq B_{(1+\delta)r}(x_0) \cap \domain$. 

Now, again emphasizing that we use the same notation as in Step 1 of the proof Lemma \ref{Prop_1_aux}, it suffices to show that $\tilde{u} := u(\gamma(\cdot))$ satisfies
\begin{align}
\label{mvp_thm_changed_coordinates}
\int_{B_r(x_0) \cap \domain} |\nabla \tilde{u}|^2 \, \dd y
\lesssim \Big(\frac{r}{R} \Big)^{d} \int_{B_R(x_0) \cap \domain} |\nabla \tilde{u}|^2 \, \dd y
\end{align}
as well as (after changing coordinates so that $\vec{n}(x_0)=-e_1$)
\begin{align}
\label{excess_decay_thm_changed_coordinates}
\begin{split}
& \inf_{A\in \mathbb{R}^d} \fint_{B_r^+} |\nabla \widetilde{u^\eps} - A \cdot \nabla ( \gamma_1 + \widetilde{\phi^\eps} - \eps \widetilde{\theta^\eps}) |^2 \dy\\
& \qquad  \lesssim_{d, \lambda, \domain} \Big(\frac{r}{R}\Big)^{2\alpha}
\inf_{A\in \mathbb{R}^d} \fint_{B_R^+} |\nabla \widetilde{u^\eps} - A \cdot \nabla ( \gamma_1 + \widetilde{\phi^\eps} - \eps \widetilde{\theta^\eps}) |^2 \dy,
\end{split}
\end{align}
for any $r,R$ with $c(\domain) \geq R \geq r \geq r_{\domain}^*(a^\eps,x_0)\geq \eps$.

In particular, assuming \eqref{mvp_thm_changed_coordinates}, we would have that
\begin{align}
\begin{split}
\int_{B_{(1-\delta)r}(x_0) \cap \domain} |\nabla u|^2 \, \dd x & = \int_{\gamma^{-1}(B_{(1-\delta)r}(x_0) \cap \domain)} \sum_{\beta=1}^d \Big| \frac{\partial \tilde{u}}{\partial y _{\nu}} \frac{\widetilde{\partial \gamma_{\nu}^{-1}}}{\partial x _{\beta}} \Big|^2  |\det (D \gamma)| \, \dd y\\
& \lesssim \int_{B_r(x_0) \cap \domain} |\nabla \tilde{u}|^2 \, \dd y\\
& \lesssim \Big(\frac{r}{R} \Big)^{d} \int_{B_R(x_0) \cap \domain} |\nabla \tilde{u}|^2 \, \dd y \lesssim  \Big(\frac{r}{R} \Big)^{d} \int_{B_{(1+\delta)R}(x_0) \cap \domain} |\nabla u |^2 \, \dd x,
\end{split}
\end{align} 
where the last inequality follows similarly to the first. Similarly, \eqref{excess_decay_thm} readily follows from \eqref{excess_decay_thm_changed_coordinates}.

\noindent
\emph{Step 3:}
Following the same strategy as in Step 3 of the proof Lemma \ref{Prop_1_aux}, we assume (without loss of generality) that $r \leq R/ 16$ and introduce the solution $v$ to the equation
\begin{equation}
\label{v_general}
\begin{aligned}
-\nabla \cdot (\hat{\bar b} \nabla \widetilde{v}) &=  0&& \text{in } \quad  B_{R/2}(x_0) \cap \domain,\\
\widetilde{v} & = \widetilde{u} && \text{on } \quad \partial (B_{R/2}(x_0) \cap \domain),
\end{aligned}
\end{equation} 
for the coefficient field $\bar b$ such that $\widehat{\bar b} = \widehat{\bar a}(0)$.  Defining the ``homogenization error'' now as 
\begin{align*}
w^\eps: = u -(v + \eta (\phi^\eps_i -\eps \theta^\eps_i) \partial_i v),
\end{align*}
where $\eta$ is set to be the same cut-off as in Step 3 of the proof Lemma \ref{Prop_1_aux}, we obtain by proceeding analogously to \eqref{cacc_intermediate}
\begin{equation}
\label{w_general}
\begin{aligned}
-\nabla \cdot (a^\eps \nabla w^\eps) &= \nabla \cdot ( (1-\eta) (a^\eps-\bar{a}) \nabla v + (\bar{a} -\bar b) \nabla v + ( a^\eps(\phi^\eps_i - \eps\theta^\eps_i) - \sigma^\eps_i ) \nabla(\eta \partial_i v)) && \\
& \qquad  - a^\eps \eps \nabla   \theta^\eps_i \cdot \nabla (\eta \partial_i v)&& \text{in }  \gamma(B_{R/2}^+),
\\
u & = 0 && \text{on } \partial (\gamma(B_{R/2}^+)).
\end{aligned}
\end{equation}

Changing coordinates and using the same notation as in Step 3 of the proof Lemma \ref{Prop_1_aux}, we find that $\widetilde{w^\eps}$ solves 
\begin{equation}
\label{w_general_flattened}
\begin{aligned}
-\nabla \cdot (\widehat{a^\eps} \nabla \widetilde{w^\eps}) &= \nabla \cdot \Big( (1- \tilde{\eta}) (\widehat{a^\eps}-\hat{\bar{a}}) \nabla \tilde{v} + (\widehat{\bar{a}} -\widehat{\bar{a}}(0)) \nabla \tilde{v} + (\widehat{a^\eps} (\widetilde{\phi_i^\eps} - \eps\widetilde{\theta_i^\eps}) - \widehat{\sigma_i^\eps})  \nabla(\tilde{\eta}  \partial_{\xi} \tilde{v} ) \widetilde{\frac{\partial \gamma^{-1}_\xi}{ \partial x_{i}}}  && \\
& \qquad \qquad + (a^{\eps,1} (\widetilde{\phi_i^\eps} - \eps \widetilde{\theta_i^\eps}) - \sigma^{\eps, 1}_i )  \tilde{\eta}  \partial_{\xi} \tilde{v} \stackon[-8pt]{$\nabla \frac{\partial \gamma_{\xi}^{-1}}{\partial x_i}$}{\vstretch{1.4}{\hstretch{2}{\widetilde{\phantom{\;\;\;\;\;\;\;\;}}}}}\Big) && \\
& \quad -  \widehat{a^\eps}  \eps \nabla \tilde{\theta^\eps_i} \cdot \nabla (\tilde{\eta}   \partial_{\xi} \tilde{v}) \widetilde{\frac{\partial \gamma^{-1}_\xi}{ \partial x_{i}}} - a^{\eps,2}\eps \nabla \widetilde{ \theta_i^\eps} \cdot \tilde{\eta}  \partial_{\xi} \tilde{v} \stackon[-8pt]{$\nabla \frac{\partial \gamma_{\xi}^{-1}}{\partial x_i}$}{\vstretch{1.4}{\hstretch{2}{\widetilde{\phantom{\;\;\;\;\;\;\;\;}}}}} && \hspace{-1.5cm}\text{in } B_{R/2}^+,
\\
\widetilde{w^\eps} & = 0&& \hspace{-1.5cm} \text{on } \partial B_{R/2}^+.
\end{aligned}
\end{equation}
Then notice that $\tilde{u} - \partial_1 \tilde{v}(0) (\gamma_1 + \widetilde{\phi_1^\eps} -\eps\widetilde{\theta_1^\eps})$ is $\widehat{a^\eps}$-harmonic in $B^+_{R(1+\delta)/2}$ and that  $\tilde{u} - \partial_1 \tilde{v}(0) (y_1 + \widetilde{\phi_1^\eps} -\eps\widetilde{\theta_1^\eps})$ has homogeneous Dirichlet boundary data on the flat part of $\partial B^+_{R/2}$. Since $\widehat{a^\eps}$ is uniformly elliptic and bounded, this implies a Caccioppoli estimate 
\begin{align}
\label{Cacc_straightened}
\begin{split}
& \int_{B_r^+} |\nabla \tilde{u} - \partial_1\tilde{v}(0) (\nabla \gamma_1 + \nabla \widetilde{\phi_1^\eps} - \nabla \widetilde{\theta_1^\eps}) |^2 \, \dd y \\
& \lesssim \frac{1}{r^2} \int_{B_{2r}^+} | \tilde{u} - \partial_1 \tilde{v}(0) (\gamma_1 + \widetilde{\phi_1^\eps} + \widetilde{\theta_1^\eps}) |^2 \, \dd y + \int_{B_{2r}^+} | \partial_1\tilde{v}(0) |^2 \, |e_1 - \nabla \gamma_1|^2  \, \dd y.
\end{split}
\end{align}
Combining \eqref{Cacc_straightened} with the same steps as used to obtain \eqref{C_update_1} in Step 3 of the proof of Lemma \ref{Prop_1_aux}, in particular applying Lemma \ref{Hardy} to $\widetilde{w^\eps}$ solving \eqref{w_general_flattened}, we find that 
\begin{align}
\label{C_update_1_flattened}
\begin{split}
&
\frac{1}{r^2} \int_{B_{2r}^+} | \tilde{u} - \partial_1 \tilde{v}(0) (\gamma_1 + \widetilde{\phi_1^\eps} + \widetilde{\theta_1^\eps}) |^2 \, \dd y
\\
& \lesssim \sup_{B^+_{R/2 - \rho}}  ((\rho^{-2}+1) |\nabla \tilde{v}|^2 + |\nabla^2 \tilde{v}|^2)  \int_{B_{R/2}^+}|\widetilde{\phi^\eps}|^2+|\eps \widetilde{\theta^\eps}|^2+|\widetilde{\sigma^\eps}|^2 + R^2 |\eps \nabla \widetilde{\theta^\eps}|^2 \, \dd y
\\&~~~
+ \sup_{B_{2r}^+} |\nabla^2 \tilde{v}|^2  \Big( r^{d+2}+ \int_{B_{2r}^+} |\widetilde{\phi^\eps}|^2 + |\eps\widetilde{\theta^\eps}|^2  \, \dd y \Big)  + \Big( \Big(\frac{\rho}{R}\Big)^{\kappa} +r^2 \Big)\int_{B_{R/2}^+} |\nabla \tilde{v}|^2 \dd y
\\
& ~~~ +  \int_{B_{R/2}^+} |(\widehat{\bar{a}} - \widehat{\bar{a}}(0)) \nabla \tilde{v}|^2 \dd y.
\end{split}
\end{align}
Using Lemma \ref{constant_coeff_reg} along with the standard whole-space corrector estimates as well as Lemma \ref{Prop_1_aux}, for $r\geq \eps$ we find that
\begin{align}
\label{new_term_1}
&\bigg(\sup_{B^+_{R/2 - \rho}} ((\rho^{-2}+1) |\nabla \tilde v|^2 + |\nabla^2 \tilde{v}|^2) \bigg) \int_{B_{R/2}^+} |\widetilde{\phi^\eps}|^2+|\eps\widetilde{\theta^\eps}|^2+|\widetilde{\sigma^\eps}|^2 + R^2 |\eps \nabla \tilde{\theta}^\eps|^2 \, \dd y
\\&\nonumber
\lesssim \C(a,x_0) R^d \left(\frac{R}{\rho}\right)^{d+2} \Big(\frac{\varepsilon}{r}\Big)^{\frac23} \fint_{B^+_{R/2}} |\nabla \tilde v|^2 \dd y.
\end{align}
Notice that in \eqref{new_term_1} we have also used that $R \leq c(\domain)$. Furthermore, using the regularity of $\gamma$ we have that
\begin{align}
\label{new_term_2}
\int_{B_{R/2}^+} |(\tilde{\bar{a}} - \tilde{\bar{a}}(0)) \nabla \tilde{v}|^2 \dd y \lesssim R^{d+2} \fint_{B_{R/2}^+} |\nabla \tilde{v}|^2 \dd y
\end{align}
and 
\begin{align}
\label{new_term_3}
 \int_{B_{2r}^+} | \partial_1\tilde{v}(0) |^2 \, |e_1 - \nabla \gamma_1|^2  \, \dd y \lesssim r^{d+2} \fint_{B^+_{R/2}} |\nabla \tilde{v}|^2 \dd y.
\end{align}
Following the arguments of Step 2 of the proof of Lemma \ref{Prop_1_aux} in combination with \eqref{new_term_1} and \eqref{new_term_2} yields
\begin{align}
\label{C_update_2_flattened}
\begin{split}
& \fint_{B_r^+} |\nabla \tilde{u} - \partial_1 \tilde{v}(0) (\nabla \gamma_1 + \nabla \tilde{\phi}_1 - \nabla \tilde{\theta}_1) |^2 \dd y
\\
& \lesssim  \fint_{B_{R/2}^+} |\nabla \tilde v|^2 \dd y \bigg[  \Big(\frac{R}{r} \Big)^d \Big( \frac{R}{\rho}\Big)^{d+2}  \C(a, x_0) \Big(\frac{\varepsilon}{r}\Big)^{\frac23}  + \Big(\frac{r}{R}\Big)^2 +\Big(\frac{R}{r} \Big)^d \Big(R^{2} +\Big(\frac{\rho}{R}\Big)^{\kappa}\Big) \bigg].
\end{split}
\end{align}
In particular,
first choosing the ratio $\tau:=r/R$ small enough and then choosing the ratio $\rho / R$ small enough, and then requiring $R$ to be sufficiently larger than $\varepsilon$ but much smaller than $1$, for any $\alpha\in (0,1)$ we can find some $\tau \in (0,1)$ such that the excess estimate
\begin{align}
\label{BasicIterationEstimate}
&\inf_{A\in \mathbb{R}^d} \fint_{B_{\tau R}^+} |\nabla \tilde{u} - A \cdot (\nabla \gamma_1 + \nabla \widetilde{\phi^\eps_1} - \eps\nabla \widetilde{\theta^\eps_1}) |^2 \dd y
\leq 
\tau^{2\alpha}
\fint_{B_R^+} |\nabla \tilde{u}|^2 \dd y
\end{align}
is valid for any $R\in [\tilde{\mathcal{C}}(a,x_0)\varepsilon,c(\domain)]$.

\noindent
\emph{Step 4:} In order to be able to iterate the estimate \eqref{BasicIterationEstimate}, first note that $\mathcal{Y}_1:=y_1+\widetilde{\phi_1^\eps}-\eps\widetilde{\theta_1^\eps}$ satisfies $-\nabla \cdot (\hat a \nabla \mathcal{Y}_1)=\nabla \cdot (\hat a (\nabla \gamma_1-e_1))$. Introducing $\mathcal{W}^R_1$ as the solution to $-\nabla \cdot (\hat a \nabla \mathcal{W}^R_1)=\nabla \cdot (\hat a (\nabla \gamma_1-e_1))$ in $B_R^+$ with homogeneous Dirichlet boundary conditions on $B_R^+$, we infer for any $\tilde A_R\in \mathbb{R}^d$
\begin{align*}
&\inf_{A\in \mathbb{R}^d} \fint_{B_{\tau R}^+} |\nabla \tilde{u} - (A+\tilde A_R) \cdot (\nabla \gamma_1 + \nabla \widetilde{\phi^\eps_1} - \eps\nabla \widetilde{\theta^\eps_1}) - \tilde A_R \cdot (\nabla y_1-\nabla \gamma_1-\nabla \mathcal{W}_1^R) |^2 \dd y
\\&
\leq 
\tau^{2\alpha}
\fint_{B_R^+} |\nabla \tilde{u}- \tilde A_R \cdot (\nabla y_1 + \nabla \widetilde{\phi^\eps_1} - \eps\nabla \widetilde{\theta^\eps_1}-\nabla \mathcal{W}_1^R)|^2 \dd y
\end{align*}
which entails
\begin{align*}
&\bigg(\inf_{A\in \mathbb{R}^d} \fint_{B_{\tau R}^+} |\nabla \tilde{u} - (A+\tilde A_R) \cdot (\nabla \gamma_1 + \nabla \widetilde{\phi^\eps_1} - \eps\nabla \widetilde{\theta^\eps_1})|^2 \dd y\bigg)^{1/2}
\\&
\leq 
\tau^{\alpha}
\bigg(\fint_{B_R^+} |\nabla \tilde{u}- \tilde A_R \cdot (\nabla \gamma_1 + \nabla \widetilde{\phi^\eps_1} - \eps\nabla \widetilde{\theta^\eps_1})|^2 \dd y\bigg)^{1/2}
+C(\tau) R |\tilde A_R|.
\end{align*}
Denoting by $A_R$ for any $R<1$ the minimizer of the excess
\begin{align*}
\operatorname{Exc}_R(x_0):=\bigg(\fint_{B_R^+} |\nabla \tilde{u}- \tilde A_R \cdot (\nabla \gamma_1 + \nabla \widetilde{\phi^\eps_1} - \eps\nabla \widetilde{\theta^\eps_1})|^2 \dd y\bigg)^{1/2}
\end{align*}
but setting $A_1:=0$ and $\operatorname{Exc}_1(x_0):=(\fint_{B_1^+} |\nabla \tilde u|^2 \dd y)^{1/2}$, the previous bound entails for $R\geq \mathcal{C}(a^\eps,x_0)$
\begin{subequations}
\label{IterationBound}
\begin{align}
|A_{\tau R}-A_R| &\leq C \operatorname{Exc}_{R}(x_0),
\\
\operatorname{Exc}_{\tau R}(x_0)
&\leq \tau^\alpha \operatorname{Exc}_{R}(x_0) + C R |A_R|.
\end{align}
\end{subequations}
We deduce $\operatorname{Exc}_{\tau^{k+1} R}(x_0)
\leq \tau^\alpha \operatorname{Exc}_{\tau^k R}(x_0) + C \tau^k R \sum_{\tilde k=1}^k \operatorname{Exc}_{\tau^{\tilde k} R}(x_0)$
and therefore
\begin{align*}
\sum_{k=1}^K \operatorname{Exc}_{\tau^k R}(x_0)
&\leq \tau^\alpha \sum_{k=1}^{K} \operatorname{Exc}_{\tau^{k-1} R}(x_0) + C R \sum_{k=1}^K \tau^k\sum_{\tilde k=1}^k \operatorname{Exc}_{\tau^{\tilde k-1} R}(x_0)
\\&
\leq \tau^\alpha \sum_{k=1}^{K} \operatorname{Exc}_{\tau^{k-1} R}(x_0) + C R \sum_{\tilde k=1}^K \tau^{\tilde k}  \operatorname{Exc}_{\tau^{\tilde k} R}(x_0),
\end{align*}
we obtain $\sum_{k=1}^K \operatorname{Exc}_{\tau^k R}(x_0)\leq C (\fint_{B_1^+} |\nabla \tilde u|^2 \dd y)^{1/2}$. This entails the bound $|A_R|\leq C (\fint_{B_1^+} |\nabla \tilde u|^2 \dd y)^{1/2}$ for all $R$ and thus the mean-value property.

Iterating this estimate back into \eqref{IterationBound} and iterating this bound, we obtain the excess-decay property \eqref{excess_decay_thm} for any $0<\alpha<1$ and any $r,R$ with $\tilde{\mathcal{C}}(a^\eps,x_0) \eps \leq r\leq R\leq c(\domain)$.
\end{proof}

\medskip

\subsection{Proof of Corollaries \ref{cor_Reg06} and \ref{GradientBoundByIntegral}: Pointwise estimates}

Adding the assumption (A4) to the conditions of Theorem \ref{large_scale_reg_domain}, the proofs of Corollaries \ref{cor_Reg06} and \ref{GradientBoundByIntegral} are now easy consequences of the combination of the large-scale regularity of Theorem \ref{large_scale_reg_domain} and the availability of Schauder estimates at small scales. Here are the arguments:

\begin{proof}[Proof of Corollary \ref{cor_Reg06}] Notice that the assumption of (A4) entails that classical Schauder theory is applicable on scales $r\leq \frac{\eps}{\mathcal{C}(a^\eps, x_0)}$. In particular, for $r\leq \frac{\eps}{\mathcal{C}(a^\eps,x_0)}$ we have that 
\begin{align*}
|\nabla u(x_0)| \leq \mathcal{C}(a^\eps, x_0) \Big( \fint_{B_r(x_0)\cap \domain} |\nabla u|^2 \dx \Big)^{\frac12}.
\end{align*}
Applying this estimate for $r=\frac{\eps}{\mathcal{C}(a^\eps, x_0)}$ and combining it with the bound \eqref{large_scale_mvp_domain} for $r=r^*_\domain(a^\eps,x_0)$, we arrive at the desired estimate \eqref{Reg06}.
\end{proof}

\bigskip 

\begin{proof}[Proof of Corollary \ref{GradientBoundByIntegral}]
Let $w$ be the solution to the dual problem
\begin{equation}
\label{equn_wT}
\begin{aligned}
-\nabla \cdot ((a^\eps)^\transpose\nabla w)&= \nabla \delta_{x_0} &&\text{in }\domain,
\\
w& =0&&\text{on }\partial\domain.
\end{aligned}
\end{equation}
We then have that
\begin{align}
\label{RewriteGradubyduality}
\begin{split}
\nabla u (x_0) & = \int_\domain (g \cdot \nabla w - f w ) \dx ,
\end{split}
\end{align}
and hence just need to obtain appropriate estimates for $w$.
For any $y \in \overline{\domain}$ with the notation $r:=\frac{1}{2}\dist(y,x_0)$, estimate \eqref{Reg06} provides the bound
\begin{align}
\label{Boundw}
|\nabla w(y)| \leq \mathcal{C}(a^\eps,y) \Big(\fint_{B_r(y) \cap \domain} |\nabla w|^2 \dx \Big)^{\frac 12}.
\end{align}
In order to estimate the \rhs we let $\eta$ be a continuous vector field supported in $B_r(y) \cap \domain$ and let $v \in \dot H^1_{0}(\domain)$ be a weak solution of
\begin{equation*}
\begin{aligned}
-\nabla \cdot (a^\eps \nabla v)  &= \nabla \cdot \eta&&\text{in }\domain,
\\
v&= 0&&\text{on }\partial\domain.
\end{aligned}
\end{equation*}
We then have
\begin{align}
\label{RewriteGradwbyduality}
\int_{\domain} \eta \cdot \nabla w \dx =
-\int_{\domain} a^\eps\nabla v \cdot \nabla w  \dx  =
-\int_{\domain} (a^\eps)^\transpose \nabla w \cdot \nabla v  \dx   = \nabla v (x_0).
\end{align}
Again applying estimate \eqref{Reg06}, now to $v$ which is $a^\eps$-harmonic in $B_{\frac{r}{2}}(x_0)$, and using the energy estimate for $v$, we deduce
\begin{align}
\label{Boundv}
|\nabla v(x_0)|
&
\leq   \mathcal{C}(a^\eps,x_0) \Big(\fint_{B_{\frac{r}{2}}(x_0) \cap \domain} |\nabla v|^2 \dx \Big)^{\frac 12}
\leq  \mathcal{C}(a^\eps,x_0) r^{-\frac{d}{2}}  \Big(\int_{\domain} |\eta|^2 \dx  \Big)^{\frac 12}.
\end{align}
Rewriting \eqref{Boundw} using the dual formulation of the $\dot H^1$-norm and inserting \eqref{RewriteGradwbyduality} and \eqref{Boundv}, we infer
\begin{align}
\label{BoundGradw}
|\nabla w(y)|
\leq \mathcal{C}(a^\eps,y) r^{-d/2} \sup_{\eta\, : \,  \supp \eta \subset B_r(y), \int |\eta|^2 \leq 1} \int_{\domain} \eta \cdot \nabla w \dx
\leq\mathcal{C}(a^\eps,y) \mathcal{C}(a^\eps,x_0) \frac{1}{|y-x_0|^d}.
\end{align}
Furthermore, since $w = 0$ on $\partial\domain$, we get that
\begin{align}
\label{BoundOnw}
|w(y)| \leq \mathcal{C}(a^\eps,y) \mathcal{C}(a^\eps,x_0) \frac{\dist(y,\partial\domain)}{|y-x_0|^d}.
\end{align}
Inserting \eqref{BoundGradw} and \eqref{BoundOnw} in \eqref{RewriteGradubyduality}, we infer \eqref{BoundSolutionRandomOperator}.
\end{proof}

\section{Argument for Theorem \ref{PropositionExcessDecayConvex}: Regularity for random $a$-harmonic functions on convex polytopes}
\label{regularity_near_corner}

Since the key step of our argument intends to be the transferring of regularity from the homogenized operator at large-scales, we must first show regularity for the homogenized operator. While these results on constant-coefficient regularity on convex cones and convex polytopes are not new, we provide their proofs below for the convenience of the reader. In particular, we begin by showing the following excess-decay:

\begin{proposition}
\label{PropositionExcessDecayConvex_constant} Let $\omega \subseteq \mathbb{S}^{d-1}$ be open with the property that $\overline{\cup_{\{s>0\}} s \omega}$ is strictly contained in a closed half-space. Let $x_0 \in \mathbb{R}^d$ and set $\domain:= \cup_{\{s>0\}} s \omega +x_0$.  

Then there exist constants $\delta:= \delta(\domain)>0$ and $C:=C(\domain)<\infty$ such that for any $R\geq r>0$ and any harmonic function $\bar u \in H^1(\domain\cap B_R(x_0))$ with homogeneous Dirichlet boundary data $\bar u=0$ on $\partial\domain \cap B_R(x_0)$ the estimate
\begin{align}
\label{excess_decay_cone_harmonic}
\fint_{B_r(x_0) \cap \domain} |\nabla \bar u|^2 \dx
\lesssim_{\domain} \Big(\frac{r}{R}\Big)^\delta
\fint_{B_R(x_0) \cap \domain} |\nabla \bar u|^2 \dx
\end{align}
holds.
\end{proposition}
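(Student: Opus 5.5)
The plan is to separate variables in polar coordinates centred at the vertex $x_0$ and expand $\bar u$ in Dirichlet eigenfunctions of the spherical Laplacian on $\omega$. Here "harmonic" is read for the constant-coefficient operator; after a linear change of variables we may take it to be the Laplacian, and the image of a convex cone strictly contained in a half-space is again such a cone. Let $0<\mu_1\le\mu_2\le\dots$ be the Dirichlet eigenvalues of $-\Delta_{\mathbb{S}^{d-1}}$ on $\omega$, with $L^2(\omega)$-orthonormal eigenfunctions $\psi_k$. Each positive homogeneity exponent $\gamma_k>0$ solves $\gamma_k(\gamma_k+d-2)=\mu_k$. For $\bar u\in H^1(\domain\cap B_R(x_0))$ with zero Dirichlet data on the flat sides, write $\bar u(x_0+s\theta)=\sum_k c_k s^{\gamma_k}\psi_k(\theta)$ for $s<R$. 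The decaying modes $s^{-\gamma_k-d+2}$ are excluded because they are not in $H^1$ near the vertex: their Dirichlet energy near $0$ behaves like $\int_0 s^{-2\gamma_k-2d+2}s^{d-1}\,ds$, which diverges. Orthogonality of the $\psi_k$, and of their tangential gradients in $L^2(\omega)$, then gives
\begin{align*}
\int_{B_r(x_0)\cap\domain}|\nabla\bar u|^2\dx=\sum_k c_k^2\,(\gamma_k^2+\mu_k)\int_0^r s^{2\gamma_k-2+d-1}\,ds=\sum_k c_k^2\,\frac{\gamma_k^2+\mu_k}{2\gamma_k+d-2}\,r^{2\gamma_k+d-2}.
\end{align*}
Since every mode is a power of $r$ with exponent at least $2\gamma_1+d-2$, each term satisfies $\text{term}_k(r)\le (r/R)^{2\gamma_1+d-2}\,\text{term}_k(R)$. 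Summing over $k$ and dividing by the volumes, which scale like $r^d$ and $R^d$, yields
\begin{align*}
\fint_{B_r(x_0)\cap\domain}|\nabla\bar u|^2\dx\le\Big(\frac rR\Big)^{2\gamma_1-2}\fint_{B_R(x_0)\cap\domain}|\nabla\bar u|^2\dx.
\end{align*}
This proves the claim with $\delta=2(\gamma_1-1)$, provided $\gamma_1>1$.

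The key step, and the main obstacle, is showing $\gamma_1>1$, which is equivalent to $\mu_1>d-1$. I would argue by domain monotonicity of Dirichlet eigenvalues. By hypothesis, $\overline{\omega}$ is contained in the open hemisphere $H=\{\theta\cdot e>0\}$ for some unit vector $e$. The hemisphere has first eigenvalue exactly $d-1$, with eigenfunction $\theta\cdot e$; this corresponds to the linear harmonic function $x\cdot e$, for which $\gamma=1$. Because $\omega\subsetneq H$ with $\overline\omega\subset H$, the complement $H\setminus\overline\omega$ is a nonempty open set. Strict monotonicity then holds: if $\mu_1(\omega)=\mu_1(H)$, the extension by zero of $\psi_1$ would be a first eigenfunction on $H$ that vanishes on an open set, contradicting unique continuation (or the strict positivity of the first eigenfunction). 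Hence $\mu_1>d-1$ and $\delta(\domain)=2\gamma_1-2>0$, where $\gamma_1$ depends only on $\omega$.

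The remaining technical points need care but should be routine. First, the eigenfunction expansion of an $H^1$ weak solution must be justified. I would project $\bar u(x_0+s\,\cdot)$ onto each $\psi_k$; the coefficient $a_k(s)$ satisfies the Euler ODE $a_k''+\frac{d-1}{s}a_k'-\frac{\mu_k}{s^2}a_k=0$ distributionally on $(0,R)$, and the $H^1$ condition at the vertex discards the singular branch. Interior regularity makes the projection legitimate, and no regularity of $\partial\omega$ is needed beyond $H^1_0(\omega)$ being the form domain. Second, the Parseval identity for the gradient splits $|\nabla\bar u|^2=|\partial_s\bar u|^2+s^{-2}|\nabla_\theta\bar u|^2$ and uses the orthogonality relation $\int_\omega\nabla_\theta\psi_k\cdot\nabla_\theta\psi_l=\mu_k\delta_{kl}$. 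Finally, if $\omega$ is disconnected, the argument applies componentwise, and each component still lies in $H$ with the strict inequality.
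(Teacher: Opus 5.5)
Your proposal is correct and follows the same route as the paper: expand $\bar u$ in Dirichlet eigenfunctions of the spherical Laplacian on $\omega$, then get $\gamma_1>1$ (the paper's $b_1>1$) by comparing $\mu_1(\omega)$ with the hemisphere's first eigenvalue $d-1$. The differences are minor improvements. You read off the energy decay directly from Parseval for $\nabla\bar u$, with constant $1$, whereas the paper bounds $\bar u$ in $L^2$ on spheres, applies Poincar\'e on a well-chosen sphere $\rho\omega$, and then uses Caccioppoli. Your unique-continuation argument also makes explicit the strict inequality that the paper only cites as ``eigenvalue comparison''. That argument needs only that $H\setminus\overline\omega$ is nonempty, and this weaker condition is exactly what the later application to wedge-shaped cones at hyperedges requires.
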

\noindent As one sees in the proof of Proposition \ref{PropositionExcessDecayConvex_constant}, the cone being strictly contained in a half-space (a property that in particular excludes obtuse angles at the vertex of the cone) ensures that the corner contributions of $\bar u$ scale in such a way as to allow for \eqref{excess_decay_cone_harmonic}. Otherwise, for general cones, one would only expect \eqref{excess_decay_cone_harmonic} to hold for a certain regular contribution of $\bar u$ --see, e.\,g., \cite{Dauge_book}.

\medskip

Moving to the case of a convex polytope, we then obtain the following consequence of Proposition \ref{PropositionExcessDecayConvex_constant}.

\begin{corollary}
\label{CorollaryConstantCoefficientRegularityConvex} Let $\domain\subseteq \mathbb{R}^d$ be a convex polytope. Denote by $E$ the set of $(d-2)$-dimensional hyperedges of $\partial \domain$, and let $x_0\in \overline\domain$ and $0<\rho\leq \diam (\domain)$. 

Then there exist $\delta=\delta(\domain)$ such that any harmonic function $\bar u$ on $\domain\cap B_\rho(x_0)$ with homogeneous Dirichlet boundary conditions on $\partial\domain \cap B_\rho(x_0)$ satisfies 
\begin{align}
\label{BoundHarmonicFirstDerivative}
|\nabla \bar u(x_0)| \lesssim \Big(\frac{\dist(x_0,E)}{\rho}\Big)^\delta \Big(\fint_{\domain\cap B_{\rho}(x_0)} |\nabla \bar u|^2 \dx\Big)^{1/2}
\end{align}
and
\begin{align}
\label{BoundHarmonicSecondDerivative}
|\nabla^2 \bar u (x_0)| \lesssim \frac{1}{\rho^\delta \dist(x_0,E)^{1-\delta}} \Big(\fint_{\domain\cap B_{\rho}(x_0)} |\nabla \bar u|^2 \dx\Big)^{1/2}.
\end{align}
\end{corollary}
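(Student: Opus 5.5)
The plan is to localize around $x_0$ at the scale $\dist(x_0,E)$, using the tangent cone at the nearest point of $E$, and to combine the excess decay of Proposition~\ref{PropositionExcessDecayConvex_constant} with interior/flat-boundary regularity. Write $s:=\dist(x_0,E)$. The case $s\gtrsim\rho$ reduces to standard estimates, so assume $s\ll\rho$. For $c(\domain)$ small (depending only on the polytope), every ball $B_t(z)$ with $z\in\overline\domain$ and $t\le c(\domain)$ sees $\domain$ as a translated convex cone $z'+\cup_{s>0}s\omega$. The vertex $z'$ is a point of the lowest-dimensional stratum near $z$. Since $\domain$ is convex, each such cone is convex. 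At points of $E$ or lower-dimensional strata the cone is not a half-space, so its closure is contained in a closed half-space with the required strictness. On faces the cone is a half-space, which is excluded.

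\textbf{Step 1 (decay toward $E$).} Let $e\in E$ with $|x_0-e|=s$ (or the closest point of a lower-dimensional stratum). Proposition~\ref{PropositionExcessDecayConvex_constant} requires the cone to be centered at its vertex, so I would apply it iteratively through the stratification. First apply it at a vertex $v$ of the polytope from scale $\rho$ down to scale $|x_0-v|$. Then pass to the nearest point on the next stratum, for which the local cone is a wedge times $\mathbb{R}^k$, and apply it there. Continue until reaching $e$ at scale $\sim s$. For an edge-type cone $\{$wedge$\}\times\mathbb{R}^{d-2}$ of opening $<\pi$, I would use the proposition with the cone being the full set. If strict containment fails because the set contains lines, I would instead use the separated-variable argument: tangential derivatives are again harmonic with Dirichlet data, and the two-dimensional wedge exponent $\pi/\theta>1$ gives decay with $\delta=\pi/\theta-1$.

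The ball inclusions $B_{t}(x_0)\subset B_{2t}(e)$ for $t\ge s$ chain these estimates to give
\begin{align*}
\fint_{\domain\cap B_{s}(x_0)}|\nabla\bar u|^2\dx\lesssim\Big(\frac{s}{\rho}\Big)^{\delta}\fint_{\domain\cap B_\rho(x_0)}|\nabla\bar u|^2\dx.
\end{align*}
The number of strata is finite and depends only on $\domain$, so the constants stay controlled.

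\textbf{Step 2 (scale $s$ to the point).} In $B_{s/2}(x_0)$ the domain is either the interior or a half-space, because faces are flat. Here $\bar u$ is harmonic with zero data on a flat piece. Reflection, or Lemma~\ref{constant_coeff_reg}, gives
\begin{align*}
|\nabla\bar u(x_0)|^2+s^2|\nabla^2\bar u(x_0)|^2\lesssim\fint_{\domain\cap B_{s/2}(x_0)}|\nabla\bar u|^2\dx.
\end{align*}
Combining this with Step 1 yields \eqref{BoundHarmonicFirstDerivative} with exponent $\delta/2$, which I rename $\delta$. It also yields \eqref{BoundHarmonicSecondDerivative}, since $s^{-1}(s/\rho)^{\delta}=\rho^{-\delta}s^{\delta-1}$.

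\textbf{Main obstacle.} The main difficulty is Step 1: obtaining uniform decay in the passage through nested strata. Proposition~\ref{PropositionExcessDecayConvex_constant} excludes cones containing lines, such as wedge$\times\mathbb{R}^{d-2}$, so each decay must come from the relevant lower-dimensional cross-section. Convexity makes all the dihedral and solid angles less than $\pi$, which is what guarantees a positive exponent at every stratum. Taking the minimum over finitely many strata gives $\delta(\domain)>0$.
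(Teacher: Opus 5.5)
Your proposal is correct and is essentially the paper's proof. You chain Proposition~\ref{PropositionExcessDecayConvex_constant} at the nearest points $x_k\in E_k$ of successive strata, going from scale $\min\{\dist(x_0,E_{k+1}),\rho\}$ down to $\dist(x_0,E_k)$ until you reach $\dist(x_0,E)$. You then conclude by half-space and interior regularity at that scale. In your Step~2 the radius should be $c(\domain)\dist(x_0,E)$ rather than $\dist(x_0,E)/2$, because acute dihedral angles allow $B_{s/2}(x_0)$ to meet two facets.

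The obstacle in your last paragraph is not real. The hypothesis of Proposition~\ref{PropositionExcessDecayConvex_constant} only asks that $\omega$ be a proper subset of a hemisphere, which is all its eigenvalue comparison $\lambda_1(\omega)>\lambda_1(\mathbb{H}^d_+\cap\mathbb{S}^{d-1})$ uses. So it applies directly to wedge$\times\mathbb{R}^{d-2}$ and the other product cones at every $x_k$, $k\ge 2$; for a wedge of opening $\theta<\pi$ one gets $b_1=\pi/\theta>1$. This is exactly how the paper uses it, and your separated-variable fallback is unnecessary.
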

Given the regularity for a homogenized operator that is implied by Corollary \ref{CorollaryConstantCoefficientRegularityConvex}, in order to prove Theorem \ref{PropositionExcessDecayConvex} it only remains to argue that the regularity can be transferred at large scales. 

\subsection{Proof of Theorem \ref{PropositionExcessDecayConvex}}

\begin{proof}
We remark that it suffices to prove an $a^\eps$-harmonic analogue of Proposition~\ref{PropositionExcessDecayConvex_constant}, as then the proof of Corollary~\ref{CorollaryConstantCoefficientRegularityConvex} (in Section \ref{subsection_harmonic_polytopes}) applies \textit{mutatis mutandis} to the current situation.

We will, in particular, show the following: Let $\omega \subseteq \mathbb{S}^{d-1}$ and $x_0 \in \mathbb{R}^d$ be such that $\domain:=\cup_{\{s>0\}} s\omega + x_0$ is a convex cone satisfying the condition that $\overline{\domain}$ is strictly contained in a closed half-space. Assume additionally that $\domain\cap ((-1,1)^d+x_0)$ is a convex polytope. Then for any $r,R>0$ with $0<\eps\leq r^*(a^\eps, x_0) \leq r\leq R$ and any $a^\eps$-harmonic function $u \in H^1(\domain\cap B_R(x_0))$ with homogeneous Dirichlet boundary data $u=0$ on $\partial\domain \cap B_R(x_0)$, the estimate
\begin{align}
\label{ExcessDecayAHarmonicCone}
\fint_{B_r(x_0) \cap \domain} |\nabla u|^2 \dx
\lesssim \Big(\frac{r}{R}\Big)^\delta
\fint_{B_R(x_0) \cap \domain} |\nabla u|^2 \dx
\end{align}
holds. As always, it suffices to find constants $0<\tau_1 \leq \frac{1}{2}$, $0<\tau_2<1$ such that
\begin{align}
\label{MinimalRadiusNew}
\fint_{B_{\tau_1 R}(x_0) \cap \domain} |\nabla u|^2 \dx
\leq \tau_2
\fint_{B_R (x_0) \cap \domain} |\nabla u|^2 \dx
\end{align}
holds for any $r,R$ with $0<\eps\leq r^*(a^\eps, x_0) \leq r\leq R$ (as then an iteration yields \eqref{ExcessDecayAHarmonicCone}).

In order to establish \eqref{MinimalRadiusNew}, we proceed via a variation of the arguments in Lemma \ref{Prop_1_aux} and Theorem \ref{large_scale_reg_domain}. We first choose a cutoff $\eta$ with $\eta\equiv 0$ in $(B_R(x_0) \cap \domain)_{\rho}$, \ie in the $\rho$-neighborhood of $\partial (B_R(x_0) \cap \domain)$, and $\eta\equiv 1$ in $(B_R(x_0) \cap \domain) \setminus (B_R(x_0) \cap \domain)_{2\rho}$. Next, we choose a radius $ R^\prime \in [\tfrac{1}{2}R,\tfrac{3}{4}R]$ with $\fint_{\partial B_{R^\prime}(x_0)\cap \domain} |\nabla u|^2 \dS \lesssim  \fint_{B_R(x_0) \cap \domain} |\nabla u|^2 \dx$ and define $\bar u\in H^1(B_{R^\prime}(x_0) \cap \domain)$ to be the unique $\bar a$-harmonic function with $\bar u|_{\partial (B_{R^\prime}(x_0)\cap \domain)}=u|_{\partial (B_{ R^\prime}(x_0)\cap \domain)}$. For $z \in B_{R^\prime-\rho}(x_0) \cap \domain$, by Corollary~\ref{CorollaryConstantCoefficientRegularityConvex} and an energy estimate, the function $\bar u$ is subject to the bounds
\begin{subequations}
\label{BoundConstantCoefficientEquation}
\begin{align}
|\nabla \bar u(z)|^2 &\lesssim  \Big(\frac{R}{\rho}\Big)^d \Big(\frac{\dist(z,E)}{\rho}\Big)^{2\delta} \fint_{B_R(x_0)\cap \domain} |\nabla u|^2 \dx,
\\
|\nabla^2 \bar u(z)|^2 &\lesssim \frac{1}{\rho^{2\delta} \dist(z,E)^{2-2\delta}} \Big(\frac{R}{\rho}\Big)^d \fint_{B_R (x_0)\cap \domain} |\nabla u|^2 \dx.
\end{align}
Furthermore, by the Meyers estimate we have for some $p=p(\lambda,d)>1$
\begin{align}
\Big(\fint_{B_{R^\prime}(x_0)\cap \domain} |\nabla u|^{2p} \dx\Big)^{1/p} \lesssim \fint_{B_R(x_0) \cap \domain} |\nabla u|^{2} \dx.
\end{align}
\end{subequations}
Following the same computation as in Step 3 of the proof of Lemma \ref{Prop_1_proof}, we then find that the ``homogenization error'' $w^\eps = u-\bar u - \eta \phi^\eps_i \partial_i \bar u$ satisfies 
\begin{align*}
-\nabla \cdot (a\nabla w^\eps)
=\nabla \cdot ((a^\eps\phi^\eps_i-\sigma^\eps_i)\nabla (\eta \partial_i \bar u))+\nabla \cdot ((1-\eta)(a^\eps-\bar a)\nabla \bar u) \qquad \text{on } B_{R^\prime} (x_0) \cap \domain
\end{align*}
and is subject to homogeneous Dirichlet boundary data $w^\eps=0$ on $\partial (B_{ R^\prime}(x_0)\cap \domain)$. The energy estimate then yields 
\begin{align*}
&\int_{B_{R^\prime}(x_0)\cap\domain} |\nabla u-\nabla (\bar u+\eta \phi^\eps_i \partial_i \bar u)|^2 \dx
\\&
\lesssim \int_{B_{R^\prime}(x_0)\cap\domain} (|\phi^\eps|^2+|\sigma^\eps|^2) (\eta^2 |\nabla^2 \bar u|^2+|\nabla \eta|^2 |\nabla \bar u|^2) \dx + \int_{B_{ R^\prime}(x_0)\cap\domain} |1-\eta|^2 |\nabla \bar u|^2 \dx.
\end{align*}
Inserting the estimates \eqref{BoundConstantCoefficientEquation} and making use of the properties of the support of $\eta$, $1-\eta$, and $\nabla \eta$ as well as utilizing the corrector estimates from the literature (Theorem~\ref{CorrectorBoundGNO4}), we arrive at
\begin{align}
\label{ConeHomogenizationError}
\int_{B_{R^\prime}(x_0)\cap\domain} |\nabla u-\nabla (\bar u+\eta \phi^\eps_i \partial_i \bar u)|^2 \dx
& \lesssim \bigg( \Big(\frac{R}{\rho}\Big)^{d+2} \mathcal{C}(a^\eps,x_0) \frac{\eps^2}{R^2} + \Big(\frac{\rho}{R}\Big)^{(p-1)/p} \bigg) \int_{B_{R}(x_0)\cap\domain} |\nabla u|^2 \dx.
\end{align}
We also have by the triangle inequality
\begin{align*}
&\int_{B_{r}(x_0)\cap\domain} |\nabla (\bar u+\eta \phi_i^\eps \partial_i \bar u)|^2 \dx
\\&
\lesssim \sup_{B_r(x_0)\cap \domain } |\nabla \bar u|^2 \int_{B_{r}(x_0)\cap\domain}  \sum_i |e_i+\nabla \phi^\eps_i|^2 \dx \\
& \qquad +  \sup_{B_r(x_0)\cap \domain } |\nabla \bar u|^2 \int_{B_{r}(x_0)\cap\domain} |\nabla \eta|^2 |\phi^\eps_i|^2 \dx + \int_{B_r(x_0)\cap \domain } \eta^2 |\phi^\eps_i|^2 |\nabla^2 \bar u|^2 \dx
\end{align*}
which yields by \eqref{BoundConstantCoefficientEquation} and the corrector bounds (Theorem~\ref{CorrectorBoundGNO4})
\begin{align*}
&\fint_{B_{r}(x_0)\cap\domain} |\nabla (\bar u+\eta \phi^\eps_i \partial_i \bar u)|^2 \dx
\lesssim  \bigg( \Big(\frac{r}{R}\Big)^{2\delta} + \mathcal{C}(a^\eps,x_0) \frac{\eps^2}{\rho^2} +  \Big(\frac{R}{r}\Big)^{2-2\delta}  \mathcal{C}(a^\eps,x_0) \frac{\eps^2}{R^2} \bigg)\fint_{B_R(x_0)\cap \domain } |\nabla u|^2 \dx.
\end{align*}
Combining this estimate with the above bound for $\nabla u-\nabla (\bar u+\eta \phi_i \partial_i \bar u)$ in \eqref{ConeHomogenizationError}, we obtain
\begin{align*}
\fint_{B_{r}(x_0)\cap\domain} |\nabla u|^2 \dx
\lesssim
 \bigg(\Big(\frac{r}{R}\Big)^{2\delta} + \Big(\frac{R}{r}\Big)^d \Big(\frac{\rho}{R}\Big)^{(p-1)/p} + \mathcal{C}(a^\eps,x_0) \Big(\frac{R}{\min\{r,\rho\}}\Big)^{2+2d} \Big(\frac{\eps}{R} \Big)^2 \bigg)\fint_{B_R(x_0)\cap \domain } |\nabla u|^2 \dx.
\end{align*}
Choosing first $r/R$ small enough, then $\rho/R$ small enough, and then requiring $R\geq \eps \mathcal{C}(a^\eps,x_0)$, we arrive at our desired bound \eqref{MinimalRadiusNew}.
\end{proof}

\subsection{Proof of Proposition \ref{PropositionExcessDecayConvex_constant} and Corollary \ref{CorollaryConstantCoefficientRegularityConvex}: Harmonic regularity on convex polytopes} 
\label{subsection_harmonic_polytopes}

In this section we show that the convexity of a polytope is sufficient for implying $C^{1, \delta}$-regularity of harmonic functions (up to the boundary) for some $\delta>0$. 

\begin{proof}[Proof of Proposition \ref{PropositionExcessDecayConvex_constant}] We first remark that we may assume \wolog that $x_0=0$ and that $\domain \setminus \left\{0\right\} \subset \Hd$. In particular, rotation and translation leave the Laplacian invariant.

We now consider the $(d-1)$-dimensional manifold $\omega$ with boundary $\partial\domain\cap \mathbb{S}^{d-1}$ and its associated Laplace operator $-\Delta$ with homogeneous Dirichlet boundary conditions.
Denote by $(\lambda_i,\varphi_i)_{i\in \mathbb{N}}$ its eigenvalues and eigenfunctions; note that we may order these such that the $(\lambda_i)_i$ form an increasing sequence diverging to $+\infty$ and we may assume that the $\varphi_i$ form an orthonormal basis $\int_\omega \varphi_i \varphi_j \dS=\delta_{ij}$.

It is then a classical (and rather immediate, using the completeness of the basis $(\varphi_i)_i$ in $L^2(\domain)$) observation that any harmonic function $\bar u$ on $\domain\cap B_\rho$ with homogeneous Dirichlet boundary conditions on $\partial\domain \cap B_\rho$ may be expressed as
\begin{align}
\label{HarmonicFunctionSeries}
\bar u(x) = \sum_{i \in \mathbb{N}} a_i \bigg(\frac{|x|}{\rho}\bigg)^{b_i} \varphi_i\bigg(\frac{x}{|x|}\bigg)
\end{align}
for coefficients $a_i:=\int_{\omega} \bar u(\rho x) \varphi_i(x) \dS$ and with $b_i$ denoting the positive solution to $b_i(b_i-1)+(d-1)b_i=\lambda_i$ or equivalently $b_i=\tfrac{2-d}{2}+\sqrt{\lambda_i+\tfrac{1}{4}(d-2)^2}$.

This representation entails the estimate
\begin{align*}
\fint_{r \omega} |\bar u|^2 \dS
&\leq \sum_{i \in \mathbb{N}} \Big(\frac{r}{\rho}\Big)^{2b_i} \bigg| \rho^{-(d-1)} \int_{\rho \omega} \bar u(x) \varphi_i(x/\rho) \dS\bigg|^2
\leq \Big(\frac{r}{\rho}\Big)^{2b_1} \rho^{-(d-1)} \int_{\rho \omega} |\bar u|^2 \dS.
\end{align*}
Applying the Poincar\'e inequality on $\rho \omega$ to estimate the \rhs and integrating over all radii from $0$ to $2r$, we arrive at
\begin{align*}
\fint_{B_{2r} \cap \domain} |\bar u|^2 \dx
\lesssim \rho^2 \Big(\frac{r}{\rho}\Big)^{2b_1} \fint_{\rho \omega} |\nabla \bar u|^2 \dS.
\end{align*}
We next choose $\rho \in (R/2,R]$ such that
\begin{align*}
\fint_{\rho \omega} |\nabla \bar u|^2 \dS \lesssim \fint_{ B_R \cap \domain} |\nabla \bar u|^2 \dx
\end{align*}
and apply the Caccioppoli inequality on $B_{2r}\cap \domain$ to obtain
\begin{align*}
\fint_{B_{r} \cap \domain} |\nabla \bar u|^2 \dx
\lesssim \Big(\frac{r}{\rho}\Big)^{2b_1-2} \fint_{B_R \cap \domain} |\nabla \bar u|^2 \dx.
\end{align*}
By our assumption, $\overline{\domain}$ is strictly contained in a closed half-space. By eigenvalue comparison, we have $\lambda_1(\omega)>\lambda_1(\mathbb{H}_+^d\cap \mathbb{S}^{d-1})$. As $x_1|_{\mathbb{H}_+^d\cap \mathbb{S}^{d-1}}$ is the lowest eigenfunction of the Laplace operator on the half-sphere $\mathbb{H}_+^d \cap \mathbb{S}^{d-1}$, it follows that $b_1=b_1(\omega)>b_1(\mathbb{H}_+^d\cap \mathbb{S}^{d-1}(\domain))=1$ must hold for the domain $\domain$. This concludes our proof.
\end{proof}

\bigskip

\begin{proof}[Corollary \ref{CorollaryConstantCoefficientRegularityConvex}]
The proof of \eqref{BoundHarmonicFirstDerivative} proceeds as follows: For $1\leq k\leq d$, denote the set of $(d-k)$-dimensional hyperedges of $\partial\domain$ by $E_k$. For instance, in case $d=3$ denote by $E_1$ the set of faces of the polytope, denote the set of edges by $E_2$, and denote the set of vertices by $E_3$. Notice that $E =\cup_{k\in \{2,\ldots,d\}} E_k$.

For any point $x_0\in \domain$, set $r_{d+1}:=\diam( \domain)$ and $r_{k}:=\dist(x_0,E_k)$; note that in particular $r_1=\dist(x_0,\partial\domain)$ and that $r_k\leq r_{k+1}$. We can then find a sequence of points $x_d,\ldots,x_1$ with $x_k\in E_k$ as well as $|x_0-x_k|=r_k$.

For $k\geq 2$ and for radii $r,R$ with $0<r\leq R\leq c(\domain)$ we find that Proposition~\ref{PropositionExcessDecayConvex_constant} is applicable around the point $x_k$ (note that here we use the convexity of the polytope). Setting $R:=\min\{r_{k+1}/2,\rho/2\}$ and $r:=2r_{k}$ in Proposition~\ref{PropositionExcessDecayConvex_constant} yields (assuming that $2r_k\leq \rho/2$ and that $2r_k \leq \min\{r_{k+1},\rho\}$ -- if the latter condition is violated, the overall inequality however holds trivially)
\begin{align*}
\fint_{\domain\cap B_{r_k}(x_0)} |\nabla \bar u|^2 \dx
&\lesssim  \fint_{\domain \cap B_{2r_k}(x_k)} |\nabla \bar u|^2 \dx
\\&
\stackrel{\eqref{excess_decay_cone_harmonic}}{\lesssim}
\Big(\frac{2r_k}{\min\{r_{k+1}/2,\rho/2\}}\Big)^\delta
\fint_{\domain \cap B_{\min\{r_{k+1}/2,\rho/2\}}(x_k)} |\nabla \bar u|^2 \dx
\\&
\lesssim  \Big(\frac{r_{k+1}}{\min\{r_{k+1},\rho/2\}}\Big)^\delta \fint_{\domain\cap B_{\min\{r_{k+1},\rho\}}(x_0)} |\nabla \bar u|^2 \dx
\end{align*}
where the first and the third estimate are simply a consequence of an inclusion of balls.
For $k=1$ the similar bound
\begin{align*}
\fint_{\domain\cap B_{r_1}(x_0)} |\nabla \bar u|^2 \dx
&\leq C \fint_{\domain\cap B_{\min\{r_2,\rho\}}(x_1)} |\nabla \bar u|^2 \dx
\end{align*} 
holds using the regularity of harmonic functions on half-spaces instead of Proposition~\ref{PropositionExcessDecayConvex}. For $k=0$, we instead obtain by interior regularity of harmonic functions
\begin{align*}
|\nabla \bar u(x_0)|^2 \leq C \fint_{\domain\cap B_{\min\{r_1,\rho\}}(x_0)} |\nabla \bar u|^2 \dx.
\end{align*}
Combining these bounds yields \eqref{BoundHarmonicFirstDerivative}.

For $x_0 \notin E$, the bound \eqref{BoundHarmonicSecondDerivative} follows by combining \eqref{BoundHarmonicFirstDerivative} with the Caccioppoli inequality on $B_r(x_0)$ with $r:=c(\domain)\dist(x_0,E)$ to obtain
\begin{align*}
\fint_{\domain \cap B_{r/2}(x_0)} |\nabla^2 \bar u|^2 \dx
\leq C r^{-2} \fint_{\domain \cap B_{r}(x_0)} |\nabla \bar u|^2 \dx
\leq \frac{C}{\rho^{2\delta} \dist(x_0,E)^{2-2\delta}} \fint_{\domain\cap B_{\rho}(x_0)} |\nabla \bar u|^2 \dx.
\end{align*}
The pointwise estimate follows by $C^{2,\alpha}$-regularity theory on $B_{r/2}(x_0)\cap \mathbb{R}^d$ or (up to a rotation) on $B_{r/2}(x_0)\cap \mathbb{H}_+^d$.
\end{proof}

\section{Proof of Proposition \ref{sens_Lm0}: A weighted Meyers estimate}
\label{Meyers}

Our proof of the weighted Meyers estimate for the solution $v$ to \eqref{EquationV} relies on combining the standard Meyers estimate on members of a dyadic decomposition of $\domain$ with large-scale regularity results for the random elliptic operator. Since $v$ does not have Dirichlet boundary conditions on the various members of the decomposition, when we apply the classical Meyers estimate on each contribution, we pick up an additional term on the right-hand side (see, \textit{e.g.}, \eqref{Meyers_A}), the sum of which we handle by employing the large-scale regularity theory for $a$-harmonic functions on $\domain$. 

\begin{proof}
Recall that either $\domain = \Hd$ or $\domain$ is a bounded $C^{1,1}$-domain. Notice that for bounded domains $\domain$ we may assume that $R \ll c(\domain)$, since otherwise \eqref{Lemma_4_est_1} and \eqref{Lemma_4_est_2} hold trivially as a result of the Meyers estimate for \eqref{EquationV} and the definition \eqref{omega}. For our argument, for $f,g \in L^2(\domain)$, let $v_0, v_1 \in \dot H^1_{0} (\domain)$ be the (if $\domain = \Hd$, decaying) solutions of 
\begin{align}
\label{Lemma16_0}
- \nabla \cdot (a^\transpose(\tfrac{\cdot}{\eps}) \nabla v_0) = \nabla \cdot g  \text{ and }  - \nabla \cdot (a^\transpose(\tfrac{\cdot}{\eps}) \nabla v_1) = f \quad  \text{ both on } \domain, \, \,  \text{ with } v_0, v_1 = 0 \text{ on } \partial \domain. 
\end{align}
We, furthermore, introduce a domain dependent dyadic decomposition: Choose $\bar{r}$ such that $\diam( \domain) = 3 \times 2^{\tilde{k}} \times \bar{r}$ for some $\tilde{k} \in \mathbb{N}_0$ and $\bar{r} \sim R$, and let $J$ be the largest $j$ such that $\domain \cap B_{2^{j}\bar{r}} (x_0)$ is either a ball or is approximately the intersection of a half-space with a ball (\textit{i.e.}, $\domain \cap B_{2^{j}\bar{r}} (x_0)$ does not contain disjoint pieces of $\partial \domain$) for all $0 \leq j \leq J$. In the case that $\domain = \Hd$ we have that $J = \infty$, whereas $|J-\log_2 ( \diam(\domain) / \bar{r})|\lesssim 1$ when $\domain$ is a bounded $C^{1,1}$-domain. We then introduce
\begin{align*}
A_j :=
\begin{cases}
\left\{ |x-x_0| \leq 2 \bar{r} \right\}  \quad & \text{ for } j=0,\\
\left\{ 2^j \bar{r} < |x-x_0|  \leq 2^{j+1} \bar{r} \right\} \cap \domain \quad & \text{ for } 0<j<J,\\
\domain \setminus B_{2^j \bar{r}} (x_0) \quad & \text{ for } j = J.
\end{cases}
\end{align*}

The proof then proceeds in three steps: \\

\noindent{\bf Step 1:} \qquad  For $v_0$ solving \eqref{Lemma16_0}, the estimate
\begin{align}
\label{Lemma16_Step1}
\begin{split}
\Big( \int_{A_j} |\nabla v_0 |^2 \dx\Big)^{\frac{1}{2}} \lesssim \sum_{k=0}^{J}  \Big(2^{-|k-j|d} \int_{A_k} |g|^2 \dx \Big)^{\frac{1}{2}}
\end{split}
\end{align}
holds. As we will see, this follows via minor adaptions of \cite[Proof of Lemma 4.3]{BellaFehrmanFischerOtto}. We, in particular, make use of \eqref{large_scale_mvp_domain} of Theorem \ref{large_scale_reg_domain}.

To show the estimate \eqref{Lemma16_Step1}, we proceed by duality: Let $\rho \in L^2(\domain; \Rd^d)$ be compactly supported in $A_j$ and normalized in the sense that $\| \rho \|_{L^2(\domain)}  =1$, and let $w \in \dot H^1_0(\domain)$ solve 
\begin{equation}
\begin{aligned}
\label{Lemma16_w}
-\nabla \cdot (a(\tfrac{\cdot}{\eps}) \nabla w) & = -\nabla \cdot \rho &&\text{in } \domain,\\
w & = 0  && \text{on } \partial \domain.  
\end{aligned}
\end{equation}
Additionally using \eqref{Lemma16_0}, we have that 
\begin{align*}
\int_{\domain} \nabla v_0 \cdot \rho \dx = \int_{\domain} \nabla v_0 \cdot a(\tfrac{\cdot}{\eps}) \nabla w \dx = \int_{\domain} a^\transpose(\tfrac{\cdot}{\eps}) \nabla v_0 \cdot \nabla w \dx = - \int_{\domain} \nabla w \cdot g \dx,
\end{align*}
whereby it only remains to show that 
\begin{align}
\label{Lemma16_Step1_show} 
\left| \int_{A_k}  g \cdot \nabla w \dx \right| \lesssim \Big( 2^{-|k-j|d} \int_{A_k} |g|^2 \dx \Big)^{\frac{1}{2}}
\quad\quad\text{for all }k\in \{0,\ldots,J\}
\end{align}
(as then summing in $k$ and taking the supremum with respect to all possible $\rho$ yields \eqref{Lemma16_Step1}).

We first treat the case that $k \leq j+1$, for which showing that 
\begin{align}
\int_{A_k} |\nabla w|^2 \dx \lesssim (2^{k-j})^d
\end{align}
is enough. In the case that $j \leq k \leq j+1$ this follows directly from the energy estimate for \eqref{Lemma16_w}. For the case that $k < j $, we use that $w$ is $a^\eps$-harmonic in $B_{2^{j-1}\bar{r}} (x_0) \cap \domain$ to obtain
\begin{align*}
\int_{A_k} |\nabla w|^2 \dx \leq \int_{B_{2^{k+1} \bar{r}}(x_0) \cap \domain } |\nabla w|^2 \dx \stackrel{\eqref{large_scale_mvp_domain}}{\lesssim} \frac{|B_{2^{k+1} \bar{r}}(x_0) \cap \domain|}{|B_{2^{j-1} \bar{r}}(x_0) \cap \domain|} \int_{B_{2^{j-1} \bar{r}}(x_0) \cap \domain} |\nabla w|^2 \dx \lesssim 2^{d(k-j)}
.
\end{align*}
Notice that in the first and second inequalities above we have used that neither $k$ nor $j-1$ are equal to $J$ --in the second inequality we have also used the appropriate large-scale mean-value property. In the last inequality we have used the definition of $w$ and normalization of $\rho$.

We then consider the case that $k > j+1$, and let $u_k$ solve
\begin{equation}
\begin{aligned}
\label{Lemma16_wk}
\nabla \cdot (a^\transpose(\tfrac{\cdot}{\eps}) \nabla u_k) &= \nabla \cdot (\chi_{A_k} g ) &&  \text{ in } \domain,\\
u_k & = 0 && \text{ on } \partial \domain.
\end{aligned}
\end{equation}
In combination with \eqref{Lemma16_w}, \eqref{Lemma16_wk} implies that 
\begin{align*}
\int_{A_j} \nabla u_k \cdot \rho \dx = \int_{\domain} \nabla u_k \cdot \rho \dx = \int_{\domain} \nabla w \cdot a^\transpose(\tfrac{\cdot}{\eps}) \nabla u_k \dx = \int_{A_k} \nabla w \cdot g \dx.
\end{align*} 
Using the normalization of the vector-field $\rho$, we then obtain 
\begin{align*}
\left| \int_{A_k} \nabla w  \cdot g  \dx \right| \leq \Big( \int_{A_j} |\nabla u_k |^2  \dx \Big)^{\frac{1}{2}}.
\end{align*}
Since we have that $u_k$ is $a^\transpose$-harmonic on $B_{2^{k-1}\bar{r}}(x_0) \cap \domain $, we may apply the mean-value property \eqref{large_scale_mvp_domain} to obtain
\begin{align*}
\Big( \int_{A_j} |\nabla u_k|^2 \dx \Big)^{\frac{1}{2}}
&\leq \Big( \int_{B_{2^{j+1} \bar{r}}(x_0) \cap \domain} |\nabla u_k|^2 \dx \Big)^{\frac{1}{2}} 
\\&
\lesssim \Big( \frac{|B_{2^{j+1} \bar{r}}(x_0) \cap \domain|}{|B_{2^{k-1} \bar{r}}(x_0) \cap \domain|} \int_{B_{2^{k-1} \bar{r}}(x_0) \cap \domain} |\nabla u_k|^2  \dx\Big)^{\frac{1}{2}}  \lesssim  \Big( 2^{(j-k)d} \int_{A_k} |g|^2  \dx\Big)^{\frac{1}{2}} .
\end{align*} 
Notice that we have again used that $k\neq J$ and $j-1\neq J$.
\\

\noindent{\bf Step 2:} \qquad For $v_1$ solving \eqref{Lemma16_0}, the estimate
\begin{align}
\label{Lemma16_Step2}
\begin{split}
\Big( \int_{A_j} |\nabla v_1 |^2 \dx \Big)^{\frac{1}{2}} \lesssim \sum_{k=0}^{J} ((k-j)_+ +1)|A_0|^{\frac{1}{d}}\Big( 2^{-|k-j|d + 2 \max(k,j) } \int_{A_k} |f|^2 \dx \Big)^{\frac{1}{2}}
\end{split}
\end{align}
holds. This follows just like the corresponding step in \cite[Proof of Lemma 4.3]{BellaFehrmanFischerOtto}: We rewrite the PDE $-\nabla \cdot (a^\transpose \nabla u)=f$ as a PDE with a divergence-form right-hand side via solving $-\Delta V=f$ on $\domain$ with Neumann boundary conditions and setting $g:=-\nabla V$ to obtain $-\nabla \cdot (a^\transpose(\tfrac{\cdot}{\eps}) \nabla u)=\nabla \cdot g$. On a bounded domain, prior to this procedure we replace $f$ by $f-\smash{\frac{\fint_\domain f}{\fint_\domain \psi}\psi}$ for a smooth nonnegative function $\psi$ supported in $A_J$ (as otherwise the Neumann problem may have no solution) and estimate the solution to $-\nabla \cdot (a^\transpose(\tfrac{\cdot}{\eps}) \nabla u_{avg}) = \smash{\frac{\fint_\domain f}{\fint_\domain \psi}\psi}$ separately. The estimate \eqref{Lemma16_Step2} then follows from \eqref{Lemma16_Step1} and the decay properties of the Green's function of the Laplace operator and uses that $|A_j| \sim |A_0| 2^{jd}$ --which is also true for $A_J$, thanks to how we have defined $J$.
\\

\noindent{\bf Step 3:} \qquad We now conclude. Beginning with the argument for \eqref{Lemma_4_est_1}, we assume that $f =0$ and notice that the standard Meyers estimate applied on each $A_j$ gives
\begin{align}
\label{Meyers_A}
\Big( \fint_{A_j} |\nabla v |^{2p} \dx \Big)^{\frac{1}{2p}} \lesssim \Big( \fint_{A_j^+} |\nabla v|^2 \dx \Big)^{\frac{1}{2}} + \Big( \fint_{A_j^+} |g|^{2p} \dx \Big)^{\frac{1}{2p}},
\end{align}
where we use the convention
\begin{align*}
A_j^+ := \begin{cases}
A_0 \cup A_1 & \text{ if } j=0,\\
A_{j-1} \cup A_j \cup A_{j+1} & \text{ if } 0 < j < J,\\
A_{J-1} \cup A_J  & \text{ if } j=J. 
\end{cases}
\end{align*}
By the definition of $\omega_{\alpha_0,R}$ and our choice of $\bar{r}\sim R$, we know that $\omega_{\alpha_0,R} \lesssim 2^{\alpha_0 (j+1)}$ on $A_j$ and $2^{\alpha_0(j-1)} \lesssim \omega_{\alpha_0,R}$ on $A_j^+$  uniformly for $j \neq 0, J$. For $j =0$ we have that $1 \lesssim \omega_{\alpha_0,R}$ on $A_0$ and  $\omega_{\alpha_0,R} \lesssim 2^{\alpha_0}$ on $A_0^+$; in the case that $\domain$ is a bounded $C^{1,1}$-domain, for $j = J$ we have that $2^J \bar{r} \sim 1$ so that $R^{-\alpha_0}  \lesssim \omega_{\alpha_0,R} $ on $A_J$ and $ \omega_{\alpha_0,R} \lesssim R^{-\alpha_0}$ on $A_J^+$, where we emphasize that the constants depend on $\domain$. These observations allow us to smuggle-in the weight $\omega_{\alpha_0, R}$ and sum \eqref{Meyers_A} over $j$ to the extent of
\begin{align*}
\int_{\domain} |\nabla v|^{2p} \omega_{\alpha_0,R} \dx \lesssim \int_{\domain} |g|^{2p} \omega_{\alpha_0,R} \dx + \sum_{j = 0}^{J} (\max_{A_j} \omega_{\alpha_0,R}) |A_j|^{1-p} \Big( \int_{A_j} |\nabla v|^2 \dx \Big)^p.
\end{align*}
To complete the argument one now observes that by \eqref{Lemma16_Step1}, we have for any $\tau>0$ small enough
\begin{align}
\nonumber
&\sum_{j = 0}^{J} (\max_{A_j} \omega_{\alpha_0,R}) |A_j|^{1-p} \Big( \int_{A_j} |\nabla v|^2 \dx \Big)^p
\\&
\nonumber
\lesssim
\sum_{j = 0}^{J} 2^{\alpha_0 j} (2^j R)^{(1-p)d} \Big( \sum_{k=0}^{J} \Big( 2^{-|k-j|d } \int_{A_k} |g|^2 \dx \Big)^{1/2} \Big)^{2p}
\\&
\nonumber
\lesssim
\sum_{j = 0}^{J} 2^{\alpha_0 j} (2^j R)^{(1-p)d} \sum_{k=0}^{J} 2^{-|k-j|p(d-\tau)} (C 2^k R)^{d(p-1)} \int_{A_k} |g|^{2p} \dx
\\&
\nonumber
\lesssim
\sum_{k=0}^{J} 2^{\alpha_0 k} (2^k R)^{(1-p)d} (C 2^k R)^{d(p-1)} \int_{A_k} |g|^{2p} \dx
\\&
\label{Meyers_A_1}
\lesssim
\int_{\domain} |g|^{2p}  \omega_{\alpha_1,R} \dx.
\end{align}
Here, in the penultimate estimate the bound $pd>\alpha_0+(1-p)d$ or equivalently $\alpha_0<d(2p-1)$ entered.
 
We move on to the argument for \eqref{Lemma_4_est_2}. For this we assume that $g = 0$ and apply the version of Meyers' estimate for non-divergence form equations on each $A_j$. This gives
\begin{align*}
\Big( \fint_{A_j} |\nabla v |^{2p} \dx \Big)^{\frac{1}{2p}} \lesssim \Big( \fint_{A_j^+} |\nabla v|^2 \dx \Big)^{\frac{1}{2}} +  |A_j|^{1/d}\Big( \fint_{A_j^+} |f|^{2p} \dx \Big)^{\frac{1}{2p}},
\end{align*}
where we use that $|A_j|^{\frac{1}{d}} \sim |A_0|^{\frac{1}{d}} 2^j$ and $2^j \lesssim \frac{|x|}{R} +1$ on $A_j$ to further write
\begin{align*}
\int_{\domain} |\nabla  v |^{2p} \omega_{\alpha_0,R} \dx \lesssim |A_0|^{\frac{2p}{d}} \int_{\domain} |f|^{2p} \omega_{\alpha_0 + 2p,R} \dx + \sum_{j=0}^J \Big( \max_{A_j} \omega_{\alpha_0,R} \Big) |A_j|^{1-p} \Big( \int_{A_j} |\nabla v|^2 \dx \Big)^p.
\end{align*}
Showing that 
\begin{align}
\label{Meyers_B.1}
\sum_{j=0}^J \Big( \max_{A_j} \omega_{\alpha_0,R} \Big) |A_j|^{1-p} \Big( \int_{A_j} |\nabla v|^2 \dx \Big)^p \lesssim |A_0|^{\frac{2p}{d}} \int_{\domain} |f|^{2p}  \omega_{\alpha_1,R} \dx
\end{align}
finishes the argument. The relation \eqref{Meyers_B.1} follows from \eqref{Lemma16_Step2} similarly to the derivation of \eqref{Meyers_A_1} from \eqref{Lemma16_Step1}, but requiring now that $\alpha_0 < \alpha_1 - 2p$ and $\alpha_1 < d(2p-1)$.
\end{proof}

\appendix

\section{Exponentially localized boundary layer} 

\label{exp_sec}

As has become standard procedure for the whole-space corrector (see, \eg, \cite{GNO_final, FN_2020}), in our arguments in the case that $\domain = \Hd$ we make use of an exponentially localized version of $\theta$. In particular, for $T>0$ and $i = 1, \ldots, d$, we define the massive approximation for the boundary layer corrector $\theta_i^T \in H^1_{\loc}(\Hd)$ (with a localization parameter $T<\infty$, the boundary layer corrector $\theta_i$ being recovered in the limit $T\rightarrow \infty$) as the weak solution of 
\begin{equation}
\label{defn_theta_T}
\begin{aligned}
-\nabla \cdot (a\nabla \theta_i^T) + \frac{1}{T} \theta_i^T &=0 \quad\quad\text{in }\Hd,
\\
\theta_i^T &=\phi_i \quad\quad\text{on }\partial\Hd,
\end{aligned}
\end{equation}
where we remark that the Dirichlet boundary condition is complimented with a decay condition in the far-field. 

We remark that thanks to the massive term $\frac{1}{T}\theta_i^T$, \eqref{defn_theta_T} is easily solvable. In particular, for any $x_0 \in \partial \domain$ one first solves \eqref{defn_theta_T} with the boundary data $\theta_i^{T,R} = \phi^T_i \chi(B_R(x_0))$, where $B_R(x_0)$ is a $d-1$-dimensional ball in $\partial \Hd$. To pass to the limit $R \rightarrow \infty$ and to obtain the independence of the limit on the base point $x_0$, one can use following exponentially localized energy estimate:

\begin{lemma} \label{exp} Assume that the coefficient field $a$ satisfies the ellipticity and boundedness condition (A1). Let $T>0$ and let $L \geq \sqrt{T}$. Let $u \in H^1_{\loc}(\Hd)$ be a weak solution of 
\begin{equation}
\label{defn_u_T}
\begin{aligned}
-\nabla \cdot (a\nabla u ) + \frac{1}{T}u&= \nabla \cdot F && \text{in } \quad \, \Hd,\\
u &=  g &&\text{on }\quad \partial \Hd,
\end{aligned}
\end{equation}
where $g$ is the trace of $g \in H^1_{loc}(\Hd)$ and $F \in L^2_{\loc}(\Hd)$, such that $u$, $F$, and $g$ satisfy
\begin{align}
\label{polynomial_growth}
\limsup_{R \rightarrow \infty} R^{-k} \Big( \fint_{B_R^+} (|u| + |\nabla u| + |F| + |g| + |\nabla g|)^2 \dx\Big)^{\frac{1}{2}} =0 
\end{align}
for some $k \in \mathbb{N}_0$. Then there exists a constant $\gamma=\gamma(d,\lambda)\in (0,1]$ such that the estimate 
\begin{align}
\label{exp_weighted_energy_estimate} 
 \int_{\Hd} \Big(|\nabla u|^2 + \frac{1}{T}|u|^2\Big) \exp(-\gamma|x| /L) \, \dx \lesssim \int_{\Hd}\Big(|\nabla g|^2+ \frac{1}{T} |g|^2 +  |F|^2\Big)  \exp(-\gamma|x| /L) \, \dx
\end{align}
holds.
\end{lemma}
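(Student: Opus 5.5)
The plan is to prove \eqref{exp_weighted_energy_estimate} by testing the equation with an exponential weight and absorbing the commutator terms, which the choice $L\geq\sqrt T$ and $\gamma$ small make perturbative.

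\emph{Reduction to zero boundary data.} Set $\tilde u := u - g$. Then $\tilde u$ vanishes on $\partial\Hd$ and solves
\begin{align*}
-\nabla\cdot(a\nabla\tilde u)+\tfrac1T\tilde u=\nabla\cdot(F+a\nabla g)-\tfrac1T g .
\end{align*}
It therefore suffices to prove the estimate for $\tilde u$ with right-hand side $|F|^2+|\nabla g|^2+\frac1T|g|^2$. The triangle inequality then returns the bound for $u$.

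\emph{Weighted test.} Let $\omega(x):=\exp(-\gamma|x|/L)$, so that $|\nabla\omega|\leq \frac{\gamma}{L}\omega$. Test the equation for $\tilde u$ with $\omega\tilde u$. This test function is admissible: it vanishes on $\partial\Hd$, and the polynomial growth condition \eqref{polynomial_growth} against the exponential decay of $\omega$ makes every integral finite. To be careful, I would first test with $\omega\tilde u\,\zeta_R$ for a cutoff $\zeta_R$ and let $R\to\infty$; the boundary terms carry $\exp(-\gamma R/L)$ times a polynomial in $R$ and vanish.

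Ellipticity (A1) gives
\begin{align*}
\lambda\int\omega|\nabla\tilde u|^2+\frac1T\int\omega\tilde u^2
\leq \int|\nabla\omega||\tilde u||\nabla\tilde u| + \int (|F|+|\nabla g|)\big(\omega|\nabla\tilde u|+|\nabla\omega||\tilde u|\big)+\frac1T\int\omega|g||\tilde u|.
\end{align*}

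\emph{Absorbing the weight commutator.} This is the key step. We bound
\begin{align*}
|\nabla\omega||\tilde u||\nabla\tilde u|\leq \frac{\gamma}{L}\,\omega|\tilde u||\nabla\tilde u|\leq \frac{\gamma}{\sqrt T}\,\omega|\tilde u||\nabla\tilde u|\leq \frac{\gamma}{2}\Big(\omega|\nabla\tilde u|^2+\frac1T\omega\tilde u^2\Big),
\end{align*}
where the middle inequality uses $L\geq\sqrt T$. Choosing $\gamma\leq\lambda/2$, this term is absorbed into the left-hand side. The terms with data are handled the same way by Young's inequality, using $|\nabla\omega|\leq\omega/\sqrt T$ once more for the cross term $|F||\nabla\omega||\tilde u|$. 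The result is
\begin{align*}
\int\omega\Big(|\nabla\tilde u|^2+\frac1T\tilde u^2\Big)\lesssim\int\omega\Big(|F|^2+|\nabla g|^2+\frac1T g^2\Big).
\end{align*}

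The main obstacle is only the justification of the test function on the unbounded domain. This is handled by the cutoff limit together with \eqref{polynomial_growth}.
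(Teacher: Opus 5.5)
Your proposal is correct and matches the paper's argument: the paper tests the equation directly with $(u-g)\exp(-\gamma|x|/L)$, which is equivalent to your testing the equation for $\tilde u = u-g$ with $\omega\tilde u$. It then uses $L\geq\sqrt{T}$ to bound $|\nabla\omega|$ by $\omega/\sqrt{T}$, absorbs the resulting $\gamma$-weighted terms via Young's inequality for $\gamma$ small, and justifies the test function by approximation using the polynomial growth condition, which your cutoff-and-limit argument makes explicit.
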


\begin{proof}
Letting $\eta = \exp(- \gamma |x|/L)$, we test \eqref{defn_u_T} with $(u-g)\eta$. We remark that testing with this function may be justified via approximation using \eqref{polynomial_growth}. After making use of $L \geq \sqrt{T}$ and Young's inequality, this yields 
\begin{align*}
& \int_{\Hd} \Big(|\nabla u|^2 + \frac{1}{T} |u|^2 \Big) \eta \dx \\
 &\lesssim \int_{\Hd} \Big( \eta a\nabla u \cdot \nabla g -(u-g)  \nabla \eta \cdot a \nabla u \dx + \frac{1}{T} \eta u g  - (u-g) \nabla \eta \cdot F - \eta \nabla(u-g) \cdot F \Big) \dx \\
& \lesssim  \int_{\Hd}  \eta \Big( \gamma  |\nabla u|^2 +\frac{\gamma}{T} |u|^2 + \frac{C(\gamma)}{T} |g|^2 + C(\gamma) |\nabla g|^2 + C(\gamma) |F|^2 \Big) \dx.
\end{align*}
Choosing a small enough $\gamma$ and absorbing the terms involving $u$ and $\nabla u$ then yields the claim. 
\end{proof}

 We remark that the estimate \eqref{exp_weighted_energy_estimate} yields the uniqueness of a sublinear solution $\theta_i^T$ to \eqref{defn_u_T}, which has the consequence that the stationarity of $\theta_i^T$ \wrt shifts parallel to $\partial \Hd$ follows from the stationary of $\phi_i$ and $a$.

\section{Regularity of random elliptic operators on $\Rd^d$ and corrector estimates}
\label{Appendix_A}

Here we first summarize the contents of the large-scale regularity results Theorems 1 and 2 of \cite{GNO_final}, which hold under the assumptions (A1)-(A3). While the companion paper to \cite{GNO_final} relies on an LSI assumption (as opposed to the spectral gap assumption in (A3)) to prove the required corrector estimates, the corrector bounds under the assumption (A3) may be taken from e.\,g.\ in \cite{FN_2020}.
\begin{theorem}[see \cite{GNO_final}] \label{summary_GNO} Let Assumptions (A1) -- (A3) be satisfied. Then there exists a random field $r^*=r^*(a,x)$ such that $\frac{r^*}{\varepsilon}$ has stretched exponential moments in the sense of \eqref{stretched} with the following property: Let $u \in H^1_{\loc}(\Rd^d)$ be $a$-harmonic in $B_R(x)$ for $R>0$ and $x \in \Rd^d$, \textit{i.e.}  suppose that $u$ satisfies 
\begin{align*}
-\nabla \cdot (a\nabla u)  &= 0 \quad \quad\text{in } \quad \, B_R(x).
\end{align*}
Then for any $r,R$ with $R\geq r \geq r^*(x,a)>0$ we have the estimate 
\begin{align}
\label{large_scale_mvp}
\fint_{B_r(x)} |\nabla u|^2 \, \dd y \lesssim_{d,\lambda} \fint_{B_R(x)} |\nabla u|^2 \, \dd y.
\end{align}
\end{theorem}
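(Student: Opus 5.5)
The statement is Theorem~\ref{summary_GNO}, the whole-space large-scale mean-value property, which the paper quotes from \cite{GNO_final}. The plan is to reproduce the standard homogenization-based Campanato argument in the whole space, using the corrector bounds recalled in the appendix. It is essentially the interior analogue of Step~3 in the proof of Theorem~\ref{large_scale_reg_domain}, with no boundary layer and no flattening. Throughout I take $\eps=1$ by scaling.

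\textbf{Definition of $r^*$.} Set
\begin{align*}
r^*(a,x):=\inf\Big\{r\geq 1:\ \frac{1}{\rho^2}\fint_{B_\rho(x)}\Big|(\phi,\sigma)-\fint_{B_\rho(x)}(\phi,\sigma)\Big|^2\dy\leq \delta_0 \text{ for all } \rho\geq r\Big\},
\end{align*}
where $\delta_0=\delta_0(d,\lambda)$ is a small threshold fixed later. This measures the scale beyond which the extended corrector is sublinear with a quantified small slope. Stretched exponential moments of $r^*$ then follow from the corrector estimates under (A3), since \cite{FN_2020} gives stretched exponential moments for $\fint_{B_\rho}|(\phi,\sigma)-\text{avg}|^2$ at the rate $\rho^{2-d}$ or better. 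A union bound over dyadic $\rho$ then yields the tail bound $\mathbb{P}[r^*>r]\lesssim \exp(-c r^{\nu'})$.

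\textbf{One-step excess decay.} Fix $R\geq r\geq r^*$ and let $u$ be $a$-harmonic in $B_R(x)$. Choose $R'\in[R/2,R]$ with good boundary trace, and let $\bar u$ be $\bar a$-harmonic in $B_{R'}$ with $\bar u=u$ on $\partial B_{R'}$. Define the error
\begin{align*}
w:=u-\bar u-\eta\,\phi_i\partial_i\bar u,
\end{align*}
with a cutoff $\eta$ of boundary-layer width $\rho$. As in \eqref{cacc_intermediate}, but with $\theta\equiv0$, $w$ solves
\begin{align*}
-\nabla\cdot(a\nabla w)=\nabla\cdot\big((a\phi_i-\sigma_i)\nabla(\eta\partial_i\bar u)+(1-\eta)(a-\bar a)\nabla\bar u\big).
\end{align*}
The energy estimate gives two contributions. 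Interior regularity of $\bar u$, namely $\sup|\nabla^2\bar u|\lesssim \rho^{-1}(R/\rho)^{d/2}(\fint|\nabla u|^2)^{1/2}$, controls the first term by $(R/\rho)^{d+2}\delta_0\fint_{B_R}|\nabla u|^2$. The Meyers estimate applied to $\bar u$ near $\partial B_{R'}$ controls the layer term by $(\rho/R)^{(p-1)/p}\fint_{B_R}|\nabla u|^2$. Comparing $\nabla u$ on $B_{\tau R}$ with $\nabla\bar u(x)\cdot(e_i+\nabla\phi_i)$, and using Caccioppoli for $x_i+\phi_i$ with the sublinearity of $\phi$, I get
\begin{align*}
\inf_{A}\fint_{B_{\tau R}}|\nabla u-A\cdot(e+\nabla\phi)|^2\leq\Big(C\tau^2+C_\tau(\rho/R)^{\beta}+C_{\tau,\rho}\,\delta_0\Big)\fint_{B_R}|\nabla u|^2.
\end{align*}
I then pick $\tau$, then $\rho/R$, then $\delta_0$ small so that the bracket is at most $\tau^{2\alpha}$.

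\textbf{Iteration.} Iterate this estimate on the dyadic scales $\tau^kR\geq r^*$, controlling the drift of the minimizing slopes $A_k$ exactly as in Step~4 of the proof of Theorem~\ref{large_scale_reg_domain}. For $\rho\geq r^*$, the smallness $\delta_0$ together with Caccioppoli gives the nondegeneracy
\begin{align*}
\fint_{B_\rho}|e+\nabla\phi|^2\sim 1,
\end{align*}
so $|A_k|^2\lesssim\fint_{B_R}|\nabla u|^2$ uniformly in $k$. Hence
\begin{align*}
\fint_{B_{\tau^kR}}|\nabla u|^2\lesssim |A_k|^2+\mathrm{Exc}_k\lesssim\fint_{B_R}|\nabla u|^2,
\end{align*}
and interpolating between dyadic scales gives \eqref{large_scale_mvp}.

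The main obstacle is this nondegeneracy and slope bookkeeping, specifically the lower bound $\fint_{B_\rho}|e_i+\nabla\phi_i|^2\gtrsim1$ needed to convert excess decay into control of $A_k$. I would obtain it from Caccioppoli in reverse: sublinearity of $\phi$ forces $x_i+\phi_i$ to have average slope close to $e_i$ on $B_\rho$ once $\delta_0$ is small, which yields the lower bound.
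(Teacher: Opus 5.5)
Your proposal is correct and follows essentially the argument of the cited source \cite{GNO_final}, which the paper quotes rather than reproves. The ingredients match: a minimal radius defined by $\delta_0$-sublinearity of $(\phi,\sigma)$; a two-scale comparison with an $\bar a$-harmonic function on a good radius, giving one-step excess decay (the same scheme the paper runs in the proof of Theorem~\ref{PropositionExcessDecayConvex}); and a Campanato iteration closed by the nondegeneracy $\fint_{B_\rho}|A\cdot(e+\nabla\phi)|^2\sim|A|^2$ for $\rho\geq r^*$. Your ``Caccioppoli in reverse'' is just Poincar\'e applied to $A\cdot(x+\phi)$.

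One small correction concerns the moment bound for $r^*$. For $d\geq 3$, the quantity $\fint_{B_\rho}|(\phi,\sigma)-\fint_{B_\rho}(\phi,\sigma)|^2$ is merely bounded and does not decay like $\rho^{2-d}$. The conclusion still holds, more directly: Theorem~\ref{CorrectorBoundGNO4} (with $\eps=1$) gives $\rho^{-2}\fint_{B_\rho}|(\phi,\sigma)|^2\leq\mathcal{C}^2\rho^{-2}$ for $\rho\geq 1$. Hence $r^*\leq\max\{1,\mathcal{C}\,\delta_0^{-1/2}\}$, and its stretched exponential moments follow without any union bound over scales.
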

In our argument, we have used the following estimate for the first-order homogenization corrector that has been proven in \cite[Theorem~2]{GNO5} under the assumption that the ensemble satisfies a log-Sobolev inequality. The result can be found in the more general setting of homogenization for nonlinear uniformly elliptic systems under the assumption that the ensemble satisfies a spectral gap in \cite[Corollary~15]{FN_2020}. For the degenerate linear elliptic setting see \cite{Bella_Kniely}.
\begin{theorem}[Corrector estimates in stochastic homogenization]
\label{CorrectorBoundGNO4}
Under Assumptions (A1)--(A3) and for $d\geq 3$, there exists a random field $\C(a^\eps, x)$ with stretched exponential moments in the sense \eqref{stretched} such that 
\begin{align}
\label{GNO_bound_balls}
\sup_{r\geq \eps} \Big(\fint_{B_r(x)} |\phi^\eps|^2+|\sigma^\eps|^2 \, \dd y \Big)^{\frac{1}{2}} \leq 
\mathcal{C}(a,x)  \varepsilon
\end{align}
holds for any $x \in \Rd^d$.
\end{theorem}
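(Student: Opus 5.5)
This is the corrector estimate of Theorem~\ref{CorrectorBoundGNO4}, which the paper imports from the literature, so I would assemble it from known ingredients rather than derive it from scratch. First I would remove $\eps$. Since $\phi^\eps(x)=\eps\phi(x/\eps)$, $\sigma^\eps(x)=\eps\sigma(x/\eps)$ and $a^\eps(\cdot)=a(\cdot/\eps)$, a change of variables turns the claim into the unit-scale bound
\begin{align*}
\sup_{r\geq 1}\Big(\fint_{B_r(x)}|\phi|^2+|\sigma|^2\dy\Big)^{1/2}\leq \C(a,x),
\end{align*}
where the random constant is evaluated at the rescaled point. Stationarity, from (A2), keeps its law independent of the point, so stretched exponential moments hold uniformly in $\eps$ and $x$.

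Next I would fix a normalization. For $d\ge3$ the correctors $\phi_i$ and $\sigma_{ijk}$ exist as stationary fields with finite second moment. I would take them as the limits $T\to\infty$ of the massive approximations, for example $\frac1T\phi^T-\nabla\cdot a(e_i+\nabla\phi^T)=0$, together with the analogous equations for $\sigma^T$. The key input is the sensitivity/spectral gap bound from \cite{GNO5} and \cite[Corollary~15]{FN_2020}, which under (A3) controls stretched exponential moments of $\phi(x)$ itself, pointwise after a local average over $B_1(x)$. Concretely:
\begin{enumerate}
\item Bound the functional derivative $\partial(\fint_{B_1}\phi)/\partial a$ by $|\nabla G|\,|e+\nabla\phi|$, where $G$ is the Green's function.
\item Use the large-scale regularity of Theorem~\ref{summary_GNO}, with minimal radius $r^*$, to show that $\nabla G$ has the decay $|x-y|^{-d}$ in a suitably averaged sense.
\item Apply the $p$-version of the spectral gap. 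In $d\ge3$ the quantity $\int(\int|x-y|^{-d}\,\ldots)^2$ is finite, which gives $\langle|\fint_{B_1(x)}\phi|^{2p}\rangle^{1/p}\lesssim p^C$, uniformly in $p$. This is equivalent to stretched exponential moments.
\end{enumerate}
The same argument applies to $\sigma$ via its equation $-\Delta\sigma_{ijk}=\partial_j q_{ik}-\partial_k q_{ij}$, with $q_i:=a(e_i+\nabla\phi_i)$.

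The main obstacle is converting these pointwise moment bounds into the supremum over all $r\ge1$ with a single random constant. My plan is to cover $B_r(x)$ by unit balls $B_1(z)$ and combine three pieces:
\begin{enumerate}
\item the local Poincaré/Caccioppoli bound $\fint_{B_1(z)}|\phi|^2\lesssim|\fint_{B_1(z)}\phi|^2+r^*(z)^{d}$;
\item stationarity;
\item a union bound over a dyadic family of scales and lattice points with weights $|z-x|^{-\beta}$.
\end{enumerate}
Together these should yield a growth bound of the form $\fint_{B_r(x)}|\phi|^2\le\C(a,x)\log^{C}(2+r)$. To obtain the uniform supremum without the logarithmic growth, I would instead use the improved estimate available in $d\ge3$: $\phi$ is stationary with bounded moments, and the random variables $\fint_{B_r}\phi$ concentrate, with fluctuations of order $r^{-d/2+1}$. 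I would then define
\begin{align*}
\C(a,x):=\sum_{k\ge0}2^{-k\tau}\Big(\fint_{B_{2^k}(x)}|\phi|^2+|\sigma|^2\Big)^{1/2}2^{k\tau}\,\mathbf 1,
\end{align*}
or rather take the supremum over dyadic $k$ directly, and check that its moments are summable. The step I expect to be delicate, and where I would follow \cite{FN_2020} closely, is getting this concentration at large scales. There, the spectral gap sensitivity estimate for $\fint_{B_r}\phi$ gains a factor $r^{-(d-2)/2}$, which is summable over dyadic scales in $d\ge3$ and therefore closes the estimate.
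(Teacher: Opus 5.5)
The paper does not prove this statement; it imports it from \cite[Theorem~2]{GNO5} and \cite[Corollary~15]{FN_2020}. Your scaling reduction to $\sup_{r\ge1}(\fint_{B_r(x)}|\phi|^2+|\sigma|^2)^{1/2}\le\C$ is right. So is your choice of the stationary normalization of $\phi,\sigma$, without which the statement is false. However, your sketch has two concrete defects.

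First, the sensitivity kernel has the wrong decay. The derivative of $\fint_{B_1(x)}\phi$ with respect to $a$ is $\nabla h\otimes(e+\nabla\phi)$, where $-\nabla\cdot(a^{*}\nabla h)=|B_1|^{-1}\chi_{B_1(x)}$. This is a first derivative of the Green's function, so it decays like $|x-y|^{1-d}$. The $|x-y|^{-d}$ decay belongs to the mixed second derivative, which is what one uses for $\nabla\phi$. This matters because $d\ge3$ enters exactly through $\int_{|y|\ge1}|y|^{2-2d}\dy<\infty$. With your kernel the integral would be finite in every dimension, so your step~3 misidentifies where the dimensional restriction comes from.

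Second, and more seriously, your closing step does not work as stated. The quantity to control uniformly in $r$ is the quadratic average $\fint_{B_r}|\phi|^2+|\sigma|^2$. It tends to $\langle|\phi|^2+|\sigma|^2\rangle$, which is in general nonzero, so it does not concentrate at zero. The $r^{1-d/2}$ fluctuation bound for the linear average $\fint_{B_r}\phi$ only controls $|\fint_{B_r}\phi|^2$. It says nothing about the oscillation $\fint_{B_r}|\phi-\fint_{B_r}\phi|^2$, which is of order one on every scale. A union bound over dyadic $k$ would need decay in $k$ of $\mathbb{P}[\fint_{B_{2^k}}|\phi|^2>M]$, i.e. concentration of the quadratic averages, and you have not provided that. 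Your displayed $\C(a,x)$ also collapses: the weights $2^{\mp k\tau}$ cancel and the series diverges.

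This step is in fact not delicate. Set $X(y):=\fint_{B_1(y)}(|\phi|^2+|\sigma|^2)$. By your local Poincar\'e bound and step~3, $X$ is a stationary field with stretched exponential moments. For $r\ge1$ one has $\fint_{B_r(x)}(|\phi|^2+|\sigma|^2)\le 2^d\sup_{R\ge1}\fint_{B_R(x)}X$. By Wiener's maximal ergodic theorem (transference of the Hardy--Littlewood maximal inequality via stationarity), $\langle(\sup_{R}\fint_{B_R(x)}X)^p\rangle^{1/p}\le C_d\langle X^p\rangle^{1/p}$ uniformly in $p\ge2$. Hence the stretched exponential moments pass to the supremum, with no logarithmic loss and no large-scale concentration needed.
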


Relying in addition on Assumption (A4), we also obtain small-scale regularity properties of the correctors.
\begin{lemma}[Regularity of the correctors on small scales]
\label{CorrectorRegularity}
Let Assumptions (A1)--(A4) be satisfied. Then for any $0<\gamma<\nu$ and any $x_0\in \mathbb{R}^d$ there exists a random constant $\mathcal{C}(a,x_0)$ with a uniform bound on suitable stretched exponential moments
\begin{align}
\label{ExpMoments}
\left \langle \exp\Big(\mathcal{C}(a,x_0)^{1/C}/C\Big) \right \rangle \leq 2
\end{align}
with $C$ depending possibly on $\gamma$ but not on $x_0$ such that the following is true: The estimates
\begin{align*}
|\nabla \phi_i^\eps(x)-\nabla \phi_i^\eps(y)|\leq \mathcal{C}(a,x_0) \frac{|x-y|^\gamma}{\varepsilon^\gamma}
\end{align*}
and
\begin{align*}
|\nabla \sigma_{ijk}^\eps(x)-\nabla \sigma_{ijk}^\eps(y)|\leq \mathcal{C}(a,x_0) \frac{|x-y|^\gamma}{\varepsilon^\gamma}
\end{align*}
hold for any $x,y\in B_\eps(x_0)$.

Furthermore, for any $0<\gamma<\nu$ and any $x_0 \in \overline{\domain}$ there exists a random constant $\mathcal{C}(a,x_0)$ with a uniform bound on suitable stretched exponential moments (in the sense \eqref{ExpMoments}) such that the following is true: The estimate
\begin{align*}
|\nabla \theta_i^\eps(x)-\nabla \theta_i^\eps(y)|\leq \mathcal{C}(a,x_0) \frac{|x-y|^\gamma}{\varepsilon^\gamma}
\end{align*}
holds for any $x,y\in \overline{\domain}\cap B_{\eps}(x_0)$.
\end{lemma}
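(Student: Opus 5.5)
The plan is to follow the standard Schauder route for the corrector, rescaled to unit scale, and to treat $\phi^\eps,\sigma^\eps$ and $\theta^\eps$ separately.

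\emph{Interior part ($\phi^\eps$, $\sigma^\eps$).} By scaling it suffices to take $\eps=1$ and prove the bound on $B_1(x_0)$. Since $\phi_i$ solves $-\nabla\cdot(a\nabla\phi_i)=\nabla\cdot(ae_i)$ in $B_2(x_0)$ and $a\in C^{0,\alpha}(B_2(x_0))$ with constant $\mathcal{C}(a,x_0)$ from (A4) (after covering $B_2(x_0)$ by finitely many unit balls), I would apply interior Schauder estimates for divergence-form equations with H\"older coefficients, e.g.\ \cite[Theorem 5.19]{GM_book}. These give
\[
[\nabla\phi_i]_{C^{0,\gamma}(B_1(x_0))}\le C\big(1+[a]_{C^{0,\gamma}(B_2(x_0))}\big)^{m}\Big(\big(\textstyle\fint_{B_2(x_0)}|\nabla\phi_i|^2\big)^{1/2}+\|a\|_{C^{0,\gamma}(B_2(x_0))}\Big)
\]
for some fixed power $m=m(d,\lambda,\gamma)$. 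The power arises because the Schauder constant depends polynomially on the H\"older seminorm of the coefficients; this can be tracked via a freezing-coefficients/Campanato argument on the scale $\mathcal{C}^{-1/\gamma}$ followed by a covering. Working with $\gamma<\alpha$ allows the seminorm to be interpolated against the $L^\infty$ bound $|a|\le1$. The $L^2$ term is bounded by Caccioppoli together with Theorem~\ref{CorrectorBoundGNO4}. For $\sigma_{ijk}$ I would use the standard equation $-\Delta\sigma_{ijk}=\partial_j q_{ik}-\partial_k q_{ij}$ with $q_i=a(e_i+\nabla\phi_i)$. Since $q_i\in C^{0,\gamma}$ by the previous step and the product rule, constant-coefficient Schauder theory yields $\nabla\sigma\in C^{0,\gamma}$, again controlled by Theorem~\ref{CorrectorBoundGNO4}. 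Products and powers of random variables with stretched exponential moments retain such moments, with worse exponents; this is what the weaker form \eqref{ExpMoments} with a $\gamma$-dependent $C$ accounts for. The restriction $\gamma<\nu$ I would simply use as $\gamma\le\min\{\alpha,\cdot\}$ up to this moment bookkeeping.

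\emph{Boundary part ($\theta^\eps$).} I would rescale to $\eps=1$ on the domain $\eps^{-1}\domain$, which is $C^{1,1}$ with curvature at most $c(\domain)^{-1}\eps\le C$, uniformly in $\eps\le1$. I would write $\theta_i=(\theta_i-\zeta\phi_i)+\zeta\phi_i$, with $\zeta$ a unit-width cutoff near the boundary. The first piece has zero Dirichlet data and solves $-\nabla\cdot(a\nabla(\theta_i-\zeta\phi_i))=\nabla\cdot(a\nabla(\zeta\phi_i))$. Its right-hand side lies in $C^{0,\gamma}(B_2(x_0)\cap\overline{\eps^{-1}\domain})$ with norm bounded by $\mathcal{C}(a,x_0)$, by the interior part together with the $L^\infty$ bound on $\phi_i$ that follows from its H\"older regularity and Theorem~\ref{CorrectorBoundGNO4}. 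Boundary Schauder estimates (\cite[Theorem 5.21]{GM_book}, after flattening by $\gamma$ as in Step~1 of Lemma~\ref{Prop_1_aux}; the flattened coefficients $\hat a$ remain $C^{0,\gamma}$ since $D\gamma\in C^{0,1}$) then bound $[\nabla(\theta_i-\zeta\phi_i)]_{C^{0,\gamma}}$ by $\mathcal{C}$ times $(\fint_{B_2(x_0)\cap\eps^{-1}\domain}|\nabla(\theta_i-\zeta\phi_i)|^2)^{1/2}$ plus the data norm.

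\emph{Main obstacle.} The hard part is this last $L^2$ term: it requires a bound on $\nabla\theta$ at unit scale near the boundary that is uniform in $\eps$ and has stretched exponential moments. I would obtain it from Lemma~\ref{Prop_1_aux}, which holds only for $r\ge\tilde{\mathcal{C}}\eps$, together with the inclusion $B_2(x_0)\subset B_{\tilde{\mathcal{C}}}(p(x_0))$, at the cost of a factor $\tilde{\mathcal{C}}^{d}$. This is again polynomial in random fields with stretched exponential moments. For points with $\dist(x_0,\partial\domain)\ge2\eps$ I would instead use the interior estimate with the same $L^2$ input. The care needed lies in keeping the dependence of all Schauder constants on the H\"older seminorm polynomial, so that the resulting moment bound is of the form \eqref{ExpMoments}.
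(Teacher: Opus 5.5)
Your treatment of $\phi^\eps$ and $\sigma^\eps$, and of $\theta^\eps$ at points within distance $O(\tilde{\mathcal C}\eps)$ of $\partial\domain$, is essentially the paper's argument. The paper likewise uses Schauder theory on the random scale where $a$ is almost frozen (its radius $\rho(a^\eps,x_0)$), a covering of the $\eps$-ball, Theorem~\ref{CorrectorBoundGNO4} as $L^2$ input, the equation with right-hand side for $\sigma$, and boundary Schauder for $\eps\theta^\eps-\zeta\phi^\eps$ fed by Lemma~\ref{Prop_1_aux}.

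The gap is in your last sentence. For $D:=\dist(x_0,\partial\domain)\ge 2\eps$ you use interior Schauder ``with the same $L^2$ input''. But Lemma~\ref{Prop_1_aux} only controls balls centered on $\partial\domain$, so it can reach $B_{2\eps}(x_0)$ only through the inclusion $B_{2\eps}(x_0)\subset B_{D+2\eps}(p(x_0))$. For $D\gg\tilde{\mathcal C}\eps$ this yields only
\[
\fint_{B_{2\eps}(x_0)}|\eps\nabla\theta^\eps|^2\dx\lesssim\Big(\frac{D}{\eps}\Big)^{d}\fint_{B_{D+2\eps}(p(x_0))\cap\domain}|\eps\nabla\theta^\eps|^2\dx\lesssim\mathcal C(a,p(x_0))\Big(\frac{D}{\eps}\Big)^{d-\frac23}.
\]
This is unbounded: on $\Hd$ as $D\to\infty$, and on a bounded domain for $D$ of order one as $\eps\to0$. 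So you do not obtain a constant whose moments are uniform in $x_0$ and $\eps$. Schauder theory acts only on scales of order $\eps$ and cannot remove this polynomial growth in $D/\eps$.

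What is missing is interior \emph{large-scale} regularity. Since $\theta^\eps$ is $a^\eps$-harmonic in $B_D(x_0)$, for $D/2\ge r^*(a^\eps,x_0)$ Theorem~\ref{summary_GNO} gives
\[
\fint_{B_{2\eps}(x_0)}|\eps\nabla\theta^\eps|^2\dx\lesssim\Big(\frac{r^*}{\eps}\Big)^{d}\fint_{B_{D/2}(x_0)}|\eps\nabla\theta^\eps|^2\dx\lesssim\Big(\frac{r^*}{\eps}\Big)^{d}\fint_{B_{3D/2}(p(x_0))\cap\domain}|\eps\nabla\theta^\eps|^2\dx.
\]
Only at this point should Lemma~\ref{Prop_1_aux} be applied, on a boundary ball of radius $\sim D$; if $D\gtrsim c(\domain)$, use \eqref{FirstBLCEstimateDomain} instead. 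In the remaining case $D/2<\max\{r^*(a^\eps,x_0),\tilde{\mathcal C}\eps\}$, your crude inclusion costs only a power of $r^*/\eps$ and $\tilde{\mathcal C}$. The resulting constant is polynomial in $r^*(a^\eps,x_0)/\eps$, $\mathcal C(a,p(x_0))$ and $\tilde{\mathcal C}(a,p(x_0))$, so it has the required stretched exponential moments. This is the same mechanism the paper uses in Case~1 of the proof of Proposition~\ref{intermediate_lemma}, where the interior large-scale estimate is applied at scale $\tfrac12\dist(x_0,\partial\domain)$.
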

\begin{proof}
Relying on Assumptions (A1) and (A4), for any $x_0\in \mathbb{R}^d$ there exists a small enough (random) radius $\rho(a^\eps,x_0)\in (0,\eps]$ with $\mathbb{E}[\exp((\eps/\rho(a^\eps,x_0))^{1/C}/C)] \leq 2$ such that by classical Schauder theory (see e.\,g.\ \cite{GM_book}) we have for any $a$-harmonic function $u$ in $B_\rho(x_0)$
\begin{align*}
||(\nabla u)(\tfrac{\cdot-x_0}{\eps})||_{C^\alpha(B_1)}
\leq C \Big(\fint_{B_{\rho(a^\eps,x_0)}(x_0)} |\nabla u|^2 \dx \Big)^{1/2}
\leq C \Big(\frac{\eps}{\rho(a^\eps,x_0)}\Big)^{d/2} \Big(\fint_{B_{\eps}(x_0)} |\nabla u|^2 \dx \Big)^{1/2}.
\end{align*}
Applying this bound to the $a$-harmonic function $x_i+\phi_i^\eps$ and using a covering argument to cover an $\eps$-ball and inserting the bound \eqref{GNO_bound_balls}, we arrive at our estimates for $\phi_i^\eps$. Given the regularity bound for $\phi_i^\eps$, the argument for $\sigma_{ijk}^\eps$ is similar, though we now need to account for a right-hand side. The bounds for $\theta_i^\eps$ are analogous to the case of $\phi_i^\eps$, possibly (depending on the location of $x_0$) also making use of boundary regularity via Schauder theory as well as also using the bound on $\eps\nabla \theta_i^\eps$ from Proposition~\ref{Prop_1_aux}.
\end{proof}

\section*{Funding}

The work of PB was supported by the Deutsche Forschungsgemeinschaft (DFG, German Research Foundation) under the priority program SPP 2256, Project number 441469601.
This research was funded in whole or in part by the Austrian Science Fund (FWF) ESP4053024. For open access purposes, the author has applied a CC BY public copyright license to any author-accepted manuscript version arising from this submission.
This project has received funding from the European Research Council (ERC) under the European Union's Horizon 2020 research and innovation programme (grant agreement No 948819). \includegraphics[height=3.0mm]{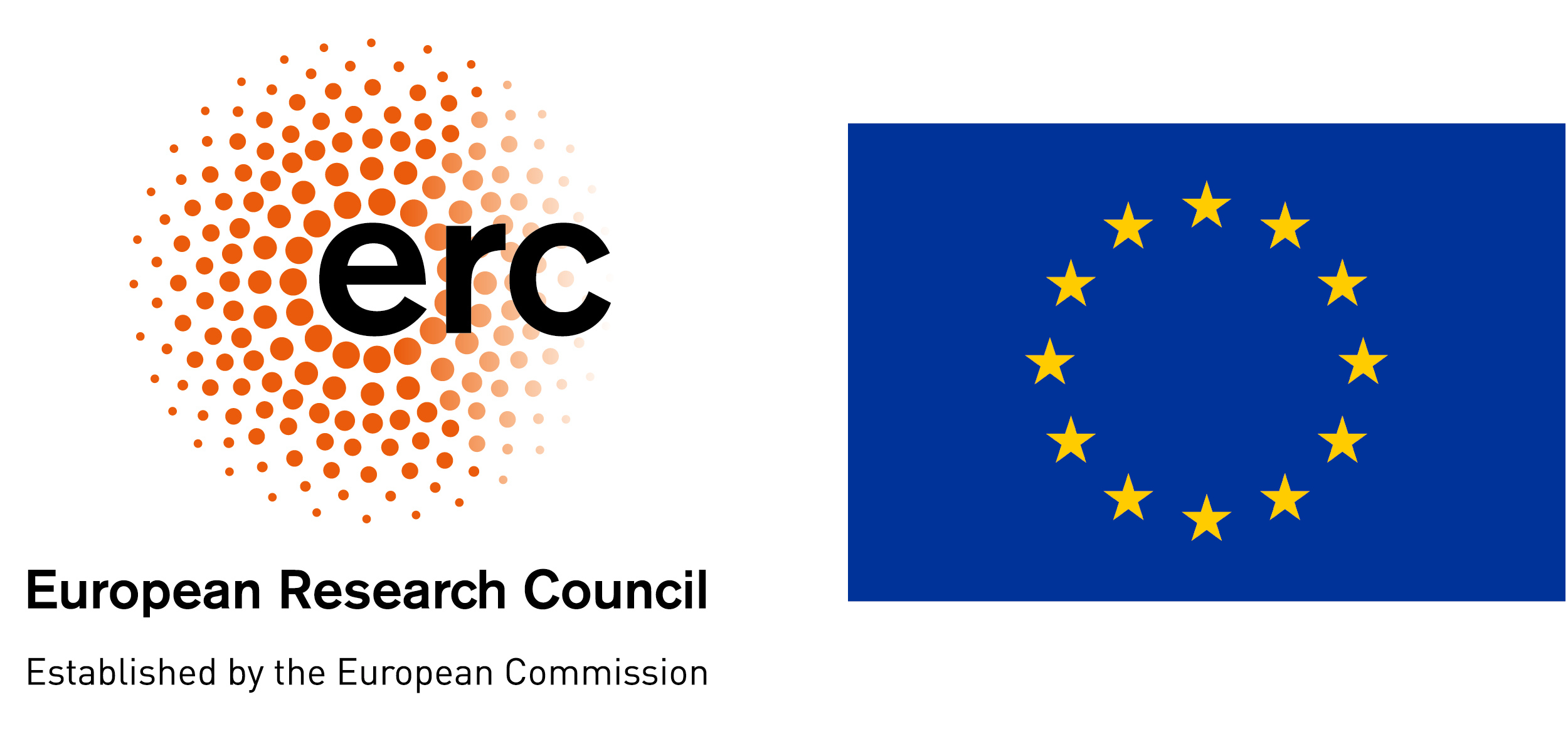}

\bibliographystyle{abbrv}
\bibliography{stochastic_homogenization}

\end{document}